\newtheorem{theorem}{Theorem}[section]
\newtheorem{proposition}[theorem]{Proposition}
\newtheorem{lemma}[theorem]{Lemma}
\newtheorem{definition}[theorem]{Definition}
\newtheorem{corollary}[theorem]{Corollary}
\newtheorem{remark}[theorem]{Remark}
\newtheorem{example}[theorem]{Example}
\renewcommand{\theequation}{\thesection.\arabic{equation}}
\newenvironment{acknowledgement}{\smallskip{\sc Acknowledgement.}\rm}{\smallskip}
\numberwithin{equation}{section}
\newcounter{counterConstant}
\let\pdfoutput=\undefined\fi
\chardef\@x10\chardef\@xv60
\def\tcitime{
\def\@time{%
  \@minute\time\@hour\@minute\divide\@hour\@xv
  \ifnum\@hour<\@x 0\fi\the\@hour:%
  \multiply\@hour\@xv\advance\@minute-\@hour
  \ifnum\@minute<\@x 0\fi\the\@minute
  }}%
\def\x@hyperref#1#2#3{%
   \catcode`\~ = 12
   \catcode`\$ = 12
   \catcode`\_ = 12
   \catcode`\# = 12
   \catcode`\& = 12
   \y@hyperref{#1}{#2}{#3}%
}
\def\y@hyperref#1#2#3#4{%
   #2\ref{#4}#3
   \catcode`\~ = 13
   \catcode`\$ = 3
   \catcode`\_ = 8
   \catcode`\# = 6
   \catcode`\& = 4
}
\def\QCTOpt[#1]#2{%
  \def\QCTOptB{#1}
  \def\QCTOptA{#2}
}
\def\QCTNOpt#1{%
  \def\QCTOptA{#1}
  \let\QCTOptB\empty
}
\def\Qct{%
  \@ifnextchar[{%
    \QCTOpt}{\QCTNOpt}
}
\def\QCBOpt[#1]#2{%
  \def\QCBOptB{#1}%
  \def\QCBOptA{#2}%
}
\def\QCBNOpt#1{%
  \def\QCBOptA{#1}%
  \let\QCBOptB\empty
}
\def\Qcb{%
  \@ifnextchar[{%
    \QCBOpt}{\QCBNOpt}%
}
\def\PrepCapArgs{%
  \ifx\QCBOptA\empty
    \ifx\QCTOptA\empty
      {}%
    \else
      \ifx\QCTOptB\empty
        {\QCTOptA}%
      \else
        [\QCTOptB]{\QCTOptA}%
      \fi
    \fi
  \else
    \ifx\QCBOptA\empty
      {}%
    \else
      \ifx\QCBOptB\empty
        {\QCBOptA}%
      \else
        [\QCBOptB]{\QCBOptA}%
      \fi
    \fi
  \fi
}
\def\GRAPHICSPS#1{%
 \ifcase\GRAPHICSTYPE
   \special{ps: #1}%
 \or
   \special{language "PS", include "#1"}%
 \fi
}%
\def\graffile#1#2#3#4{%
    \bgroup
	   \@inlabelfalse
       \leavevmode
       \@ifundefined{bbl@deactivate}{\def~{\string~}}{\activesoff}%
        \raise -#4 \BOXTHEFRAME{%
           \hbox to #2{\raise #3\hbox to #2{\null #1\hfil}}}%
    \egroup
}%
\def\draftbox#1#2#3#4{%
 \leavevmode\raise -#4 \hbox{%
  \frame{\rlap{\protect\tiny #1}\hbox to #2%
   {\vrule height#3 width\z@ depth\z@\hfil}%
  }%
 }%
}%
\let\nographics=\@msidraft
\newif\ifwasdraft
\def\GRAPHIC#1#2#3#4#5{%
   \ifnum\@msidraft=\@ne\draftbox{#2}{#3}{#4}{#5}%
   \else\graffile{#1}{#3}{#4}{#5}%
   \fi
}
\def\addtoLaTeXparams#1{%
    \edef\LaTeXparams{\LaTeXparams #1}}%
\newif\ifBoxFrame \BoxFramefalse
\newif\ifOverFrame \OverFramefalse
\newif\ifUnderFrame \UnderFramefalse
\def\BOXTHEFRAME#1{%
   \hbox{%
      \ifBoxFrame
         \frame{#1}%
      \else
         {#1}%
      \fi
   }%
}
\def\doFRAMEparams#1{\BoxFramefalse\OverFramefalse\UnderFramefalse\readFRAMEparams#1\end}%
\def\readFRAMEparams#1{%
 \ifx#1\end%
  \let\next=\relax
  \else
  \ifx#1i\dispkind=\z@\fi
  \ifx#1d\dispkind=\@ne\fi
  \ifx#1f\dispkind=\tw@\fi
  \ifx#1t\addtoLaTeXparams{t}\fi
  \ifx#1b\addtoLaTeXparams{b}\fi
  \ifx#1p\addtoLaTeXparams{p}\fi
  \ifx#1h\addtoLaTeXparams{h}\fi
  \ifx#1X\BoxFrametrue\fi
  \ifx#1O\OverFrametrue\fi
  \ifx#1U\UnderFrametrue\fi
  \ifx#1w
    \ifnum\@msidraft=1\wasdrafttrue\else\wasdraftfalse\fi
    \@msidraft=\@ne
  \fi
  \let\next=\readFRAMEparams
  \fi
 \next
 }%
\def\IFRAME#1#2#3#4#5#6{%
      \bgroup
      \let\QCTOptA\empty
      \let\QCTOptB\empty
      \let\QCBOptA\empty
      \let\QCBOptB\empty
      #6%
      \parindent=0pt
      \leftskip=0pt
      \rightskip=0pt
      \setbox0=\hbox{\QCBOptA}%
      \@tempdima=#1\relax
      \ifOverFrame
          \typeout{This is not implemented yet}%
          \show\HELP
      \else
         \ifdim\wd0>\@tempdima
            \advance\@tempdima by \@tempdima
            \ifdim\wd0 >\@tempdima
               \setbox1 =\vbox{%
                  \unskip\hbox to \@tempdima{\hfill\GRAPHIC{#5}{#4}{#1}{#2}{#3}\hfill}%
                  \unskip\hbox to \@tempdima{\parbox[b]{\@tempdima}{\QCBOptA}}%
               }%
               \wd1=\@tempdima
            \else
               \textwidth=\wd0
               \setbox1 =\vbox{%
                 \noindent\hbox to \wd0{\hfill\GRAPHIC{#5}{#4}{#1}{#2}{#3}\hfill}\\%
                 \noindent\hbox{\QCBOptA}%
               }%
               \wd1=\wd0
            \fi
         \else
            \ifdim\wd0>0pt
              \hsize=\@tempdima
              \setbox1=\vbox{%
                \unskip\GRAPHIC{#5}{#4}{#1}{#2}{0pt}%
                \break
                \unskip\hbox to \@tempdima{\hfill \QCBOptA\hfill}%
              }%
              \wd1=\@tempdima
           \else
              \hsize=\@tempdima
              \setbox1=\vbox{%
                \unskip\GRAPHIC{#5}{#4}{#1}{#2}{0pt}%
              }%
              \wd1=\@tempdima
           \fi
         \fi
         \@tempdimb=\ht1
         \advance\@tempdimb by -#2
         \advance\@tempdimb by #3
         \leavevmode
         \raise -\@tempdimb \hbox{\box1}%
      \fi
      \egroup%
}%
\def\DFRAME#1#2#3#4#5{%
  \vspace\topsep
  \hfil\break
  \bgroup
     \leftskip\@flushglue
	 \rightskip\@flushglue
	 \parindent\z@
	 \parfillskip\z@skip
     \let\QCTOptA\empty
     \let\QCTOptB\empty
     \let\QCBOptA\empty
     \let\QCBOptB\empty
	 \vbox\bgroup
        \ifOverFrame 
           #5\QCTOptA\par
        \fi
        \GRAPHIC{#4}{#3}{#1}{#2}{\z@}%
        \ifUnderFrame 
           \break#5\QCBOptA
        \fi
	 \egroup
  \egroup
  \vspace\topsep
  \break
}%
\def\FFRAME#1#2#3#4#5#6#7{%
  \@ifundefined{floatstyle}
    {
     \begin{figure}[#1]%
    }
    {
	 \ifx#1h
      \begin{figure}[H]%
	 \else
      \begin{figure}[#1]%
	 \fi
	}
  \let\QCTOptA\empty
  \let\QCTOptB\empty
  \let\QCBOptA\empty
  \let\QCBOptB\empty
  \ifOverFrame
    #4
    \ifx\QCTOptA\empty
    \else
      \ifx\QCTOptB\empty
        \caption{\QCTOptA}%
      \else
        \caption[\QCTOptB]{\QCTOptA}%
      \fi
    \fi
    \ifUnderFrame\else
      \label{#5}%
    \fi
  \else
    \UnderFrametrue%
  \fi
  \begin{center}\GRAPHIC{#7}{#6}{#2}{#3}{\z@}\end{center}%
  \ifUnderFrame
    #4
    \ifx\QCBOptA\empty
      \caption{}%
    \else
      \ifx\QCBOptB\empty
        \caption{\QCBOptA}%
      \else
        \caption[\QCBOptB]{\QCBOptA}%
      \fi
    \fi
    \label{#5}%
  \fi
  \end{figure}%
 }%
\def\makeactives{
  \catcode`\"=\active
  \catcode`\;=\active
  \catcode`\:=\active
  \catcode`\'=\active
  \catcode`\~=\active
}
   \gdef\activesoff{%
      \def"{\string"}%
      \def;{\string;}%
      \def:{\string:}%
      \def'{\string'}%
      \def~{\string~}%
    }
\def\FRAME#1#2#3#4#5#6#7#8{%
 \bgroup
 \ifnum\@msidraft=\@ne
   \wasdrafttrue
 \else
   \wasdraftfalse%
 \fi
 \def\LaTeXparams{}%
 \dispkind=\z@
 \def\LaTeXparams{}%
 \doFRAMEparams{#1}%
 \ifnum\dispkind=\z@\IFRAME{#2}{#3}{#4}{#7}{#8}{#5}\else
  \ifnum\dispkind=\@ne\DFRAME{#2}{#3}{#7}{#8}{#5}\else
   \ifnum\dispkind=\tw@
    \edef\@tempa{\noexpand\FFRAME{\LaTeXparams}}%
    \@tempa{#2}{#3}{#5}{#6}{#7}{#8}%
    \fi
   \fi
  \fi
  \ifwasdraft\@msidraft=1\else\@msidraft=0\fi{}%
  \egroup
 }%
\def\TEXUX#1{"texux"}
\def\func#1{\mathop{\rm #1}\nolimits}%
\long\def\QQQ#1#2{%
     \long\expandafter\def\csname#1\endcsname{#2}}%
\long\def\QQA#1#2{}%
\def\QTR#1#2{{\csname#1\endcsname {#2}}}%
\def\EXPAND#1[#2]#3{}%
\def\NOEXPAND#1[#2]#3{}%
\def\LaTeXparent#1{}%
\def\ChildStyles#1{}%
\def\ChildDefaults#1{}%
\def\QTagDef#1#2#3{}%
  \providecommand{\UNICODE}[2][]{\protect\rule{.1in}{.1in}}
  \providecommand{\U}[1]{\protect\rule{.1in}{.1in}}
\def\QQfnmark#1{\footnotemark}
 \def\abstract{%
  \if@twocolumn
   \section*{Abstract (Not appropriate in this style!)}%
   \else \small 
   \begin{center}{\bf Abstract\vspace{-.5em}\vspace{\z@}}\end{center}%
   \quotation 
   \fi
  }%
   \def\registered{\relax\ifmmode{}\r@gistered
                    \else$\m@th\r@gistered$\fi}%
 \def\r@gistered{^{\ooalign
  {\hfil\raise.07ex\hbox{$\scriptstyle\rm\text{R}$}\hfil\crcr
  \mathhexbox20D}}}}{}%
\newdimen\theight
\def\newfmtname{LaTeX2e}
  \DeclareOldFontCommand{\rm}{\normalfont\rmfamily}{\mathrm}
  \DeclareOldFontCommand{\sf}{\normalfont\sffamily}{\mathsf}
  \DeclareOldFontCommand{\tt}{\normalfont\ttfamily}{\mathtt}
  \DeclareOldFontCommand{\bf}{\normalfont\bfseries}{\mathbf}
  \DeclareOldFontCommand{\it}{\normalfont\itshape}{\mathit}
  \DeclareOldFontCommand{\sl}{\normalfont\slshape}{\@nomath\sl}
  \DeclareOldFontCommand{\sc}{\normalfont\scshape}{\@nomath\sc}
\def\alpha{{\Greekmath 010B}}%
\def\beta{{\Greekmath 010C}}%
\def\gamma{{\Greekmath 010D}}%
\def\delta{{\Greekmath 010E}}%
\def\epsilon{{\Greekmath 010F}}%
\def\zeta{{\Greekmath 0110}}%
\def\eta{{\Greekmath 0111}}%
\def\theta{{\Greekmath 0112}}%
\def\iota{{\Greekmath 0113}}%
\def\kappa{{\Greekmath 0114}}%
\def\lambda{{\Greekmath 0115}}%
\def\mu{{\Greekmath 0116}}%
\def\nu{{\Greekmath 0117}}%
\def\xi{{\Greekmath 0118}}%
\def\pi{{\Greekmath 0119}}%
\def\rho{{\Greekmath 011A}}%
\def\sigma{{\Greekmath 011B}}%
\def\tau{{\Greekmath 011C}}%
\def\upsilon{{\Greekmath 011D}}%
\def\phi{{\Greekmath 011E}}%
\def\chi{{\Greekmath 011F}}%
\def\psi{{\Greekmath 0120}}%
\def\omega{{\Greekmath 0121}}%
\def\varepsilon{{\Greekmath 0122}}%
\def\vartheta{{\Greekmath 0123}}%
\def\varpi{{\Greekmath 0124}}%
\def\varrho{{\Greekmath 0125}}%
\def\varsigma{{\Greekmath 0126}}%
\def\varphi{{\Greekmath 0127}}%
\def\nabla{{\Greekmath 0272}}
\def\FindBoldGroup{%
   {\setbox0=\hbox{$\mathbf{x\global\edef\theboldgroup{\the\mathgroup}}$}}%
}
\def\Greekmath#1#2#3#4{%
    \if@compatibility
        \ifnum\mathgroup=\symbold
           \mathchoice{\mbox{\boldmath$\displaystyle\mathchar"#1#2#3#4$}}%
                      {\mbox{\boldmath$\textstyle\mathchar"#1#2#3#4$}}%
                      {\mbox{\boldmath$\scriptstyle\mathchar"#1#2#3#4$}}%
                      {\mbox{\boldmath$\scriptscriptstyle\mathchar"#1#2#3#4$}}%
        \else
           \mathchar"#1#2#3#4%
        \fi 
    \else 
        \FindBoldGroup
        \ifnum\mathgroup=\theboldgroup 
           \mathchoice{\mbox{\boldmath$\displaystyle\mathchar"#1#2#3#4$}}%
                      {\mbox{\boldmath$\textstyle\mathchar"#1#2#3#4$}}%
                      {\mbox{\boldmath$\scriptstyle\mathchar"#1#2#3#4$}}%
                      {\mbox{\boldmath$\scriptscriptstyle\mathchar"#1#2#3#4$}}%
        \else
           \mathchar"#1#2#3#4%
        \fi     	    
	  \fi}
\newif\ifGreekBold  \GreekBoldfalse
\let\SAVEPBF=\pbf
\def\pbf{\GreekBoldtrue\SAVEPBF}%
  \newcounter{equationnumber}  
  \def\mathletters{%
     \addtocounter{equation}{1}
     \edef\@currentlabel{\theequation}%
     \setcounter{equationnumber}{\c@equation}
     \setcounter{equation}{0}%
     \edef\theequation{\@currentlabel\noexpand\alph{equation}}%
  }
    \def\BibTeX{{\rm B\kern-.05em{\sc i\kern-.025em b}\kern-.08em
                 T\kern-.1667em\lower.7ex\hbox{E}\kern-.125emX}}}{}%
\def\AmS{{\protect\usefont{OMS}{cmsy}{m}{n}%
                A\kern-.1667em\lower.5ex\hbox{M}\kern-.125emS}}}{}%
\def\@@eqncr{\let\@tempa\relax
    \ifcase\@eqcnt \def\@tempa{& & &}\or \def\@tempa{& &}%
      \else \def\@tempa{&}\fi
     \@tempa
     \if@eqnsw
        \iftag@
           \@taggnum
        \else
           \@eqnnum\stepcounter{equation}%
        \fi
     \fi
     \global\tag@false
     \global\@eqnswtrue
     \global\@eqcnt\z@\cr}
\def\TCItag{\@ifnextchar*{\@TCItagstar}{\@TCItag}}
\def\@TCItag#1{%
    \global\tag@true
    \global\def\@taggnum{(#1)}%
    \global\def\@currentlabel{#1}}
\def\@TCItagstar*#1{%
    \global\tag@true
    \global\def\@taggnum{#1}%
    \global\def\@currentlabel{#1}}
\def\QATOP#1#2{{#1 \atop #2}}%
\def\tbigcap{\mathop{\textstyle \bigcap }}%
\def\tbigcup{\mathop{\textstyle \bigcup }}%
\def\ExitTCILatex{\makeatother }
\if@compatibility\message{amsmath already loaded}\fi\aftergroup\ExitTCILatex}
\if@compatibility\message{amstex already loaded}\fi\aftergroup\ExitTCILatex}
\if@compatibility\message{amsgen already loaded}\fi\aftergroup\ExitTCILatex}
\let\DOTSI\relax
\def\RIfM@{\relax\ifmmode}%
\def\FN@{\futurelet\next}%
\def\iint{\DOTSI\intno@\tw@\FN@\ints@}%
\def\iiint{\DOTSI\intno@\thr@@\FN@\ints@}%
\def\iiiint{\DOTSI\intno@4 \FN@\ints@}%
\def\idotsint{\DOTSI\intno@\z@\FN@\ints@}%
\def\ints@{\findlimits@\ints@@}%
\newif\iflimtoken@
\newif\iflimits@
\def\findlimits@{\limtoken@true\ifx\next\limits\limits@true
 \else\ifx\next\nolimits\limits@false\else
 \limtoken@false\ifx\ilimits@\nolimits\limits@false\else
 \ifinner\limits@false\else\limits@true\fi\fi\fi\fi}%
\def\multint@{\int\ifnum\intno@=\z@\intdots@                          
 \else\intkern@\fi                                                    
 \ifnum\intno@>\tw@\int\intkern@\fi                                   
 \ifnum\intno@>\thr@@\int\intkern@\fi                                 
 \int}
\def\multintlimits@{\intop\ifnum\intno@=\z@\intdots@\else\intkern@\fi
 \ifnum\intno@>\tw@\intop\intkern@\fi
 \ifnum\intno@>\thr@@\intop\intkern@\fi\intop}%
\def\intic@{%
    \mathchoice{\hskip.5em}{\hskip.4em}{\hskip.4em}{\hskip.4em}}%
\def\negintic@{\mathchoice
 {\hskip-.5em}{\hskip-.4em}{\hskip-.4em}{\hskip-.4em}}%
\def\ints@@{\iflimtoken@                                              
 \def\ints@@@{\iflimits@\negintic@
   \mathop{\intic@\multintlimits@}\limits                             
  \else\multint@\nolimits\fi                                          
  \eat@}
 \else                                                                
 \def\ints@@@{\iflimits@\negintic@
  \mathop{\intic@\multintlimits@}\limits\else
  \multint@\nolimits\fi}\fi\ints@@@}%
\def\intkern@{\mathchoice{\!\!\!}{\!\!}{\!\!}{\!\!}}%
\def\plaincdots@{\mathinner{\cdotp\cdotp\cdotp}}%
\def\intdots@{\mathchoice{\plaincdots@}%
 {{\cdotp}\mkern1.5mu{\cdotp}\mkern1.5mu{\cdotp}}%
 {{\cdotp}\mkern1mu{\cdotp}\mkern1mu{\cdotp}}%
 {{\cdotp}\mkern1mu{\cdotp}\mkern1mu{\cdotp}}}%
\def\RIfM@{\relax\protect\ifmmode}
\def\text{\RIfM@\expandafter\text@\else\expandafter\mbox\fi}
\let\nfss@text\text
\def\text@#1{\mathchoice
   {\textdef@\displaystyle\f@size{#1}}%
   {\textdef@\textstyle\tf@size{\firstchoice@false #1}}%
   {\textdef@\textstyle\sf@size{\firstchoice@false #1}}%
   {\textdef@\textstyle \ssf@size{\firstchoice@false #1}}%
   \glb@settings}
\def\textdef@#1#2#3{\hbox{{%
                    \everymath{#1}%
                    \let\f@size#2\selectfont
                    #3}}}
\newif\iffirstchoice@
\def\Let@{\relax\iffalse{\fi\let\\=\cr\iffalse}\fi}%
\def\vspace@{\def\vspace##1{\crcr\noalign{\vskip##1\relax}}}%
\def\multilimits@{\bgroup\vspace@\Let@
 \baselineskip\fontdimen10 \scriptfont\tw@
 \advance\baselineskip\fontdimen12 \scriptfont\tw@
 \lineskip\thr@@\fontdimen8 \scriptfont\thr@@
 \lineskiplimit\lineskip
 \vbox\bgroup\ialign\bgroup\hfil$\m@th\scriptstyle{##}$\hfil\crcr}%
\def\Sb{_\multilimits@}%
\def\endSb{\crcr\egroup\egroup\egroup}%
\def\Sp{^\multilimits@}%
\newdimen\ex@
\def\rightarrowfill@#1{$#1\m@th\mathord-\mkern-6mu\cleaders
 \hbox{$#1\mkern-2mu\mathord-\mkern-2mu$}\hfill
 \mkern-6mu\mathord\rightarrow$}%
\def\leftarrowfill@#1{$#1\m@th\mathord\leftarrow\mkern-6mu\cleaders
 \hbox{$#1\mkern-2mu\mathord-\mkern-2mu$}\hfill\mkern-6mu\mathord-$}%
\def\leftrightarrowfill@#1{$#1\m@th\mathord\leftarrow
\mkern-6mu\cleaders
 \hbox{$#1\mkern-2mu\mathord-\mkern-2mu$}\hfill
 \mkern-6mu\mathord\rightarrow$}%
\def\overrightarrow{\mathpalette\overrightarrow@}%
\def\overrightarrow@#1#2{\vbox{\ialign{##\crcr\rightarrowfill@#1\crcr
 \noalign{\kern-\ex@\nointerlineskip}$\m@th\hfil#1#2\hfil$\crcr}}}%
\def\overleftarrow{\mathpalette\overleftarrow@}%
\def\overleftarrow@#1#2{\vbox{\ialign{##\crcr\leftarrowfill@#1\crcr
 \noalign{\kern-\ex@\nointerlineskip}$\m@th\hfil#1#2\hfil$\crcr}}}%
\def\overleftrightarrow{\mathpalette\overleftrightarrow@}%
\def\overleftrightarrow@#1#2{\vbox{\ialign{##\crcr
   \leftrightarrowfill@#1\crcr
 \noalign{\kern-\ex@\nointerlineskip}$\m@th\hfil#1#2\hfil$\crcr}}}%
\def\underrightarrow{\mathpalette\underrightarrow@}%
\def\underrightarrow@#1#2{\vtop{\ialign{##\crcr$\m@th\hfil#1#2\hfil
  $\crcr\noalign{\nointerlineskip}\rightarrowfill@#1\crcr}}}%
\def\underleftarrow{\mathpalette\underleftarrow@}%
\def\underleftarrow@#1#2{\vtop{\ialign{##\crcr$\m@th\hfil#1#2\hfil
  $\crcr\noalign{\nointerlineskip}\leftarrowfill@#1\crcr}}}%
\def\underleftrightarrow{\mathpalette\underleftrightarrow@}%
\def\underleftrightarrow@#1#2{\vtop{\ialign{##\crcr$\m@th
  \hfil#1#2\hfil$\crcr
 \noalign{\nointerlineskip}\leftrightarrowfill@#1\crcr}}}%
\def\qopnamewl@#1{\mathop{\operator@font#1}\nlimits@}
\let\nlimits@\displaylimits
\def\setboxz@h{\setbox\z@\hbox}
\def\varlim@#1#2{\mathop{\vtop{\ialign{##\crcr
 \hfil$#1\m@th\operator@font lim$\hfil\crcr
 \noalign{\nointerlineskip}#2#1\crcr
 \noalign{\nointerlineskip\kern-\ex@}\crcr}}}}
 \def\rightarrowfill@#1{\m@th\setboxz@h{$#1-$}\ht\z@\z@
  $#1\copy\z@\mkern-6mu\cleaders
  \hbox{$#1\mkern-2mu\box\z@\mkern-2mu$}\hfill
  \mkern-6mu\mathord\rightarrow$}
\def\leftarrowfill@#1{\m@th\setboxz@h{$#1-$}\ht\z@\z@
  $#1\mathord\leftarrow\mkern-6mu\cleaders
  \hbox{$#1\mkern-2mu\copy\z@\mkern-2mu$}\hfill
  \mkern-6mu\box\z@$}
\def\projlim{\qopnamewl@{proj\,lim}}
\def\injlim{\qopnamewl@{inj\,lim}}
\def\varinjlim{\mathpalette\varlim@\rightarrowfill@}
\def\varprojlim{\mathpalette\varlim@\leftarrowfill@}
\def\varliminf{\mathpalette\varliminf@{}}
\def\varliminf@#1{\mathop{\underline{\vrule\@depth.2\ex@\@width\z@
   \hbox{$#1\m@th\operator@font lim$}}}}
\def\varlimsup{\mathpalette\varlimsup@{}}
\def\varlimsup@#1{\mathop{\overline
  {\hbox{$#1\m@th\operator@font lim$}}}}
\def\align{\@verbatim \frenchspacing\@vobeyspaces \@alignverbatim
You are using the "align" environment in a style in which it is not defined.}
\let\csname endalign*\endcsname =\endtrivlist
\def\alignat{\@verbatim \frenchspacing\@vobeyspaces \@alignatverbatim
You are using the "alignat" environment in a style in which it is not defined.}
\let\csname endalignat*\endcsname =\endtrivlist
\def\xalignat{\@verbatim \frenchspacing\@vobeyspaces \@xalignatverbatim
You are using the "xalignat" environment in a style in which it is not defined.}
\let\csname endxalignat*\endcsname =\endtrivlist
\def\gather{\@verbatim \frenchspacing\@vobeyspaces \@gatherverbatim
You are using the "gather" environment in a style in which it is not defined.}
\let\csname endgather*\endcsname =\endtrivlist
\def\multiline{\@verbatim \frenchspacing\@vobeyspaces \@multilineverbatim
You are using the "multiline" environment in a style in which it is not defined.}
\let\csname endmultiline*\endcsname =\endtrivlist
\def\arrax{\@verbatim \frenchspacing\@vobeyspaces \@arraxverbatim
You are using a type of "array" construct that is only allowed in AmS-LaTeX.}
\def\tabulax{\@verbatim \frenchspacing\@vobeyspaces \@tabulaxverbatim
You are using a type of "tabular" construct that is only allowed in AmS-LaTeX.}
\let\csname endarrax*\endcsname =\endtrivlist
\let\csname endtabulax*\endcsname =\endtrivlist
 \def\endequation{%
     \ifmmode\ifinner 
      \iftag@
        \addtocounter{equation}{-1} 
        $\hfil
           \displaywidth\linewidth\@taggnum\egroup \endtrivlist
        \global\tag@false
        \global\@ignoretrue   
      \else
        $\hfil
           \displaywidth\linewidth\@eqnnum\egroup \endtrivlist
        \global\tag@false
        \global\@ignoretrue 
      \fi
     \else   
      \iftag@
        \addtocounter{equation}{-1} 
        \eqno \hbox{\@taggnum}
        \global\tag@false%
        $$\global\@ignoretrue
      \else
        \eqno \hbox{\@eqnnum}
        $$\global\@ignoretrue
      \fi
     \fi\fi
 } 
 \newif\iftag@ \tag@false
 \def\TCItag{\@ifnextchar*{\@TCItagstar}{\@TCItag}}
 \def\@TCItag#1{%
     \global\tag@true
     \global\def\@taggnum{(#1)}%
     \global\def\@currentlabel{#1}}
 \def\@TCItagstar*#1{%
     \global\tag@true
     \global\def\@taggnum{#1}%
     \global\def\@currentlabel{#1}}
     \def\tag{\@ifnextchar*{\@tagstar}{\@tag}}
     \def\@tag#1{%
         \global\tag@true
         \global\def\@taggnum{(#1)}}
     \def\@tagstar*#1{%
         \global\tag@true
         \global\def\@taggnum{#1}}
\def\qed{\hfill$\square$\par}
\def\func#1{\mathop{\mathrm{#1}}\nolimits}
\def\tbigcup{\mathop{\textstyle \bigcup }}
\def\tbigcap{\mathop{\textstyle \bigcap }}
\def\XXint#1#2#3{{\setbox0=\hbox{$#1{#2#3}{\int}$ }
\vcenter{\hbox{$#2#3$ }}\kern-.6\wd0}}
\def\Qcb#1{#1}
\def\FRAME#1#2#3#4#5#6#7#8
\def\enddoc{

\begin{document}
\title[]{Heat kernel-based $p$-energy norms on metric measure spaces}
\author[Gao]{Jin Gao}
\address{ School of Mathematics, Hangzhou Normal University, Hangzhou
310036, China.}
\email{gaojin@hznu.edu.cn}
\author[Yu]{Zhenyu Yu}
\address{College of Science, National University
of Defense Technology, Changsha 410073, China.}
\email{yuzy23@nudt.edu.cn}
\author[Zhang]{Junda Zhang}
\address{School of Mathematics, South China University of Technology,
Guangzhou 510641, China.}
\email{summerfish@scut.edu.cn}
\date{}

\begin{abstract}
We investigate heat kernel-based and other $p$-energy norms ($1<p<\infty$)
on bounded and unbounded metric measure spaces, in particular, on nested
fractals and their blow-ups. With the weak-monotonicity properties for these
semi-norms, we generalize the celebrated Bourgain-Brezis-Mironescu (BBM) type
characterization for $p\neq2$. When the underlying space admits a heat kernel satisfying
the sub-Gaussian estimates, we establish the equivalence of various $p$-energy
semi-norms and weak-monotonicity properties, and show that these
weak-monotonicity properties hold when $p=2$ (that is the case of Dirichlet
form). Our paper's key results concern the equivalence and verification of
various weak-monotonicity properties on fractals. Consequently, many
classical results on $p$-energy norms hold on nested fractals and their
blow-ups, including the BBM type characterization and Gagliardo-Nirenberg
inequality.
\end{abstract}

\subjclass[2010]{28A80, 46E30, 46E35}
\keywords{Heat kernel, weak-monotonicity property, $p$-energy, Besov norm,
nested fractal.}
\maketitle
\tableofcontents

\section{Introduction}

\subsection{Historical background and motivation}

Recently, there has been considerable development on the $p$-energy defined
on fractals and general metric measure spaces (initiated by \cite%
{HermanPeironeStrichartz.2004.PA125}). For a smooth Euclidean domain $D$,
 its associated $p$-energy is defined as $\int_{D}|\nabla u(x)|^{p}dx$, but for
many fractals or metric measure spaces, it is not easy to define proper
gradient structures to characterize the smoothness of certain `core'
functions arising naturally from analysis. Previous constructions of $p$%
-energy ($1<p<\infty $) are based on the graph-approximation to the
underlying space, including p.c.f. self-similar sets by Cao, Gu and Qiu \cite%
{Caoqiugu2022adv}, the Sierpi\'{n}ski carpet by Shimizu \cite{Shimuzu.2022}
and more general fractal spaces by Kigami \cite{Kigami2022penergy}.

In this paper, we propose an alternative way to define $p$-energy norms via
equivalent Besov-type norms, which naturally generalize the energy of
Dirichlet forms when $p=2$, and do not make use of the graph-approximation for their
the definition. Heat kernel-based $p$-energy norms were introduced by
K.~Pietruska-Pa{\l }uba in \cite{Pietruska-Paluba.2010.}, and heat
semigroup-based norms were later studied by Alonso Ruiz, Baudoin et al. for $1\leq
p<\infty $ in \cite%
{AlonsoBaudoinchen2020JFA,AlonsoBaudoinchen2020CVPDE,AlonsoBaudoinchen2021CVPDE}%
. There, the authors focus on generalizing classical analysis results including
Sobolev embedding, isoperimetric inequalities and bounded
variation functions. It is possible to use heat kernel-based $p$-energy
norms to construct equivalent (or exactly the same) $p$-energy given by the
graph-approximation approach, to satisfy further restrictions like convexity
or self-similarity for self-similar sets, see for example \cite%
{Caoqiugu2022adv,GaoYuZhang2022PA}.

Recall the celebrated `Bourgain-Brezis-Mironescu (BBM) convergence' in \cite[%
Corollary 2]{BourgainBrezisMironescu.2001.439}:
\begin{equation}
\lim_{\sigma \uparrow 1}(1-\sigma )\int_{D}\int_{D}\frac{|u(x)-u(y)|^{p}}{%
|x-y|^{n+p\sigma }}dxdy=C_{n,p}\int_{D}|\nabla u(x)|^{p}dx \ \ \
(1<p<\infty),  \label{BBM}
\end{equation}%
where $D$ is a smooth domain in $\mathbb{R}^{n}$. It states that
multiplying by a scaling factor $1-\sigma $, the fractional Gagliardo
semi-norms of a function converge to the first-order Sobolev semi-norm as $%
\sigma \rightarrow 1$. The BBM type characterization thus corresponds to the convergence of
Besov semi-norms $(\sigma ^{\ast }-\sigma )B_{p,p}^{\sigma }$ to $%
B_{p,\infty }^{\sigma ^{\ast }}$ on metric measure spaces, where $\sigma
^{\ast }$ is a certain critical exponent. This characterization has been studied for instance on fractals with `property (E)' \cite[Theorem 1.6]{GaoYuZhang2022PA}%
, and on spaces supporting a $p$-Poincar\'e inequality \cite{DS19,Munnier15,Gorny22.JGA}.%

In this paper, we establish a BBM type characterization for both bounded and unbounded metric
measure spaces under appropriate weak-monotonicity properties that replace
property (E) on fractals from \cite{GaoYuZhang2022PA}. We utilize the concept of property $(KE)_{\sigma}$
introduced in \cite[Definition 6.7]{AlonsoBaudoinchen2020JFA} and $(NE)_{\sigma}$
defined in \cite[Definition 4.5]{BaudoinLecture2022} (all termed $P(p,\alpha
)$ therein), and use $(VE)_{\sigma}$ for bounded and unbounded fractals based on \cite[%
Definition 3.1]{GaoYuZhang2022PA}, where the letters `E' stands for energy
control and `K',`N',`V' represent (heat) kernel, (Besov) norm, vertex
respectively. Additionally, we explore their slightly weaker variants by
replacing `sup' with `limsup'.

These weak-monotonicity properties correspond to certain energy-control conditions,
which are essentially required and important in all the studies related to $p$%
-energy (norms) mentioned above, to guarantee `some level of $L^p$
infinitesimal regularity and global controlled $L^p$ geometry' , as stated in \cite%
{BaudoinLecture2022}. In analysis on fractals, weak-monotonicity appears
in the form of a $p$-resistance estimate in \cite%
{Caoqiugu2022adv,Shimuzu.2022,Kigami2022penergy}, which is not easy to
obtain. Verifying the weak-monotonicity properties $(KE)_{\sigma}$, $(NE)_{\sigma}$ and $(VE)_{\sigma}$ for certain fractal-type spaces, like nested
fractals and their blow-ups, is one of the main contributions of this paper.

To do so, we establish the equivalence of $(KE)_{\sigma}$ and $(NE)_{\sigma}$ on metric measure
spaces, the equivalence of $(NE)_{\sigma}$ and $(VE)_{\sigma}$ on certain fractals (including
nested fractals), and then verify a relatively easier condition $(VE)_{\sigma}$ using
the arguments in a main theorem of \cite{Caoqiugu2022adv}. Such properties
are not easy to examine when $p\neq 2$ in general (but property $(KE)_{\sigma}$ holds
automatically when $p=2$). Alonso Ruiz, Baudoin et al. examined property
$(KE)_{\sigma}$ in the case $p=1$ using a weak Bakry-Emery type estimate and in the case $p\neq 2$
for spaces with the same critical exponent as Euclidean smooth manifolds
(see for example \cite[Lemma 4.13]{AlonsoBaudoinchen2021CVPDE}). But for
fractal-type spaces, different ideas are required, since the critical
exponents $\sigma^*$ from (\ref{BBM}) are different from manifolds ($\sigma^*=1$ for
smooth manifolds; but even when $p=2$, $\sigma^*=\log15/\log9 $ on the
Vicsek set \cite{BC23} and $\sigma^*=\log5/\log4$ on the Sierpi\'nski gasket
\cite{BP88}) and (bi-)linearity is missing.

\subsection{Preliminaries}

\label{secPre}

\subsubsection{Basic definitions}

Let $(M,d,\mu )$ be a metric measure space, that is, $(M,d)$\ is a locally
compact, separable metric space and $\mu $ is a Radon measure with full
support on $M$. Fix a value (which could be infinite)
\begin{equation*}
R_{0}\in (0,\mathrm{diam}(M)]
\end{equation*}
that will be used for localization throughout the paper, where
\begin{equation*}
\mathrm{diam}(M):=\sup \{d(x,y):x,y\in M\}\in (0,\infty]
\end{equation*}
is infinite if $M$ is unbounded and is finite if $M$ is bounded.

The measure $\mu$ is assumed to be Borel regular with the following
positivity and \emph{volume doubling property}: there exists a constant $%
C_{d}>0$ such that for every $x\in M$, $0<r<\infty $,
\begin{equation}
0<\mu (B(x,2r))\leq C_{d}\mu (B(x,r))<\infty,  \label{VD1}
\end{equation}%
where $B(x,r):=\{y\in M:d(x,y)<r\}$ denotes the open metric ball centered at
$x\in M$ with radius $r>0$. Denote
\begin{equation*}
V(x,r):=\mu (B(x,r)).
\end{equation*}
It is known from \cite[Proposition 5.1]{GrigoryanHu.2014.MMJ505} that if %
\eqref{VD1} holds, then there exists $\alpha _{1}>0$ such that
\begin{equation}
\frac{V(x,R)}{V(x,r)}\leq C_{d}\left( \frac{R}{r}\right) ^{\alpha _{1}},
\label{VD2}
\end{equation}%
for all $x\in M$, $0<r\leq R<\infty$.

For $\alpha>0$, we say that $\mu $ is \emph{$\alpha $-regular} if there
exists a constant $C\geq 1$ such that for all $0<r<R_{0}$,
\begin{equation}
C^{-1}r^{\alpha }\leq V(x,r)\leq Cr^{\alpha }.  \label{alpha_r}
\end{equation}%
We remark that when $\mu $ is $\alpha $-regular, then (\ref{VD2}) holds with
$\alpha _{1}=\alpha $.

A family $\{p_{t}\}_{t>0}$ of non-negative measurable functions on $M \times
M$ is a \emph{heat kernel} if it satisfies: for all $x, y\in M$ and $s, t>0$:
\begin{enumerate}[label=\textup{(\arabic*)}]
\item  Symmetry: $p_t(x,y) =p_t(y,x)$.
\item Markov property: $\int_M p_t(x,y)d\mu(y) \leq 1$.
\item Semigroup property: $p_{s+t}(x,y)=\int_{M}p_{s}(x,z)p_{t}(z,y)d\mu (z)$.
\item Identity approximation: for any $f\in L^{2}(M,\mu )$, $\int_{M}p_{t}(x,y)f(y)d\mu (y)\rightarrow
f(x)$\quad as$~~t\downarrow 0$, strongly in$~~L^{2}(M,\mu )$.
\end{enumerate}

For some parameter $\beta ^{\ast }>1$, we consider the following two
conditions on the heat kernel:

We say that $\{p_{t}\}_{t>0}$ satisfies the lower heat kernel estimate
(LHE), if for all $t\in (0,R_{0}^{\beta ^{\ast }}) $ and $\mu $-almost all $%
x,y\in M$,
\begin{equation}
p_{t}(x,y)\geq \frac{c_{1}}{V(x,t^{1/\beta ^{\ast }})}\exp \left(
-c_{2}\left( \frac{d(x,y)}{t^{1/\beta ^{\ast }}}\right) ^{\frac{\beta ^{\ast
}}{\beta ^{\ast }-1}}\right).  \label{LHE}
\end{equation}

We say that $\{p_{t}\}_{t>0}$ satisfies the upper heat kernel estimate
(UHE), if for all $t\in (0,R_{0}^{\beta ^{\ast }}) $ and $\mu $-almost all $%
x,y\in M$,
\begin{equation}
p_{t}(x,y)\leq \frac{c_{3}}{V(x,t^{1/\beta ^{\ast }})}\exp \left(
-c_{4}\left( \frac{d(x,y)}{t^{1/\beta ^{\ast }}}\right) ^{\frac{\beta ^{\ast
}}{\beta ^{\ast }-1}}\right).  \label{UHE}
\end{equation}

Conditions (UHE) and (LHE) hold in many cases. The classical
Gauss-Weierstrass function
\begin{equation}
p_{t}(x,y)=\frac{1}{(4\pi t)^{n/2}}\exp \left( -\frac{|x-y|^{2}}{4t}\right) ,
\label{Gau-Wei}
\end{equation}%
is a heat kernel for the standard Brownian motion in $\mathbb{R}^{n}$. Li
and Yau have shown in \cite{LiYau.1986.AM153} that, for a complete
Riemannian manifold with non-negative Ricci curvature, two-sided estimates
(UHE) and (LHE) hold with $\beta ^{\ast }=2$:
\begin{equation}
p_{t}(x,y)\asymp \frac{C}{V(x,\sqrt{t})}\exp \left( -\frac{d(x,y)^{2}}{ct}%
\right) ,  \label{LiYau}
\end{equation}%
where $d$ is the geodesic metric and $V(x,r)$ is the Riemannian volume. The
following sub-Gaussian heat kernel estimates
\begin{equation}
p_{t}(x,y)\asymp \frac{C}{t^{\alpha /\beta ^{\ast }}}\exp \left( -c\left(
\frac{d(x,y)}{t^{1/\beta ^{\ast }}}\right) ^{\frac{\beta ^{\ast }}{\beta
^{\ast }-1}}\right)  \label{hk_F}
\end{equation}%
also hold on many fractal spaces with different $\beta ^{\ast }$, such as
the Sierpi\'{n}ski carpet and nested fractals \cite%
{BarlowBass.1992.PTRF307,BarlowBass.1999.CJM673,FitzsimmonsmHamblyKumagai.1994.CMP595,Kumagai.1993.PTaRF205}%
.

\subsubsection{Besov and Korevaar-Schoen norms}

From now on we fix $p\in (1,\infty)$. As in \cite{BaudoinLecture2022}, for $%
\sigma >0$, define the semi-norm $[u]_{B_{p,\infty }^{\sigma }}$ for $u\in
L^{p}(M,\mu )$ by
\begin{equation}
\lbrack u]_{B_{p,\infty }^{\sigma }}^{p}:=\sup_{r\in (0,R_{0})}r^{-p\sigma
}\int_{M}\frac{1}{V(x,r)}\int_{B(x,r)}|u(x)-u(y)|^{p}d\mu (y)d\mu (x),
\label{pi}
\end{equation}%
and define 
\begin{equation*}
B_{p,\infty }^{\sigma }:=B_{p,\infty }^{\sigma }(M)=\{u:||u||_{B_{p,\infty }^{\sigma }}<\infty \},
\end{equation*}
with the norm $||u||_{B_{p,\infty }^{\sigma }}:=||u||_p+[u]_{B_{p,\infty }^{\sigma }}$.
Further, define the semi-norm $[u]_{B_{p,p}^{\sigma }}$ by
\begin{equation}
\lbrack u]_{B_{p,p}^{\sigma }}^{p}:=\int_{0}^{R_{0}}r^{-p\sigma }\left(
\int_{M}\frac{1}{V(x,r)}\int_{B(x,r)}|u(x)-u(y)|^{p}d\mu (y)d\mu (x)\right)
\frac{dr}{r},  \label{pp}
\end{equation}%
and
\begin{equation*}
B_{p,p}^{\sigma }:=B_{p,p}^{\sigma }(M)=\{u:||u||_{B_{p,p }^{\sigma }}<\infty \},
\end{equation*}
with the norm $||u||_{B_{p,p }^{\sigma }}:=||u||_p+[u]_{B_{p,p }^{\sigma }}$.

The space $B_{p,\infty }^{\sigma }$ coincides with the Korevaar-Schoen space
$KS_{p,\infty }^{\sigma }$ in \cite[Section 4.2]{BaudoinLecture2022} where the Korevaar-Schoen semi-norm $[u] _{KS_{p,\infty }^{\sigma }}$ is given by
\begin{equation}
[u]^p _{KS_{p,\infty }^{\sigma }}=\limsup_{r\rightarrow 0}\int_{M}%
\frac{1}{V(x,r)}\int_{B(x,r )}\frac{|u(x)-u(y)|^{p}}{r ^{p\sigma }}d\mu
(y)d\mu (x)<\infty.  \label{ks_energy1}
\end{equation}

In our paper, we define the \emph{critical exponent of $(M,d,\mu )$} by%
\begin{equation*}
\sigma _{p}^{\#}=\sup \{\sigma >0:B_{p,\infty }^{\sigma }\text{ contains
non-constant functions}\}.
\end{equation*}%
In many related studies, the critical exponent is defined by
\begin{equation*}
\sigma _{p}^{\ast }=\sup \{\sigma >0:B_{p,\infty }^{\sigma }\text{ is dense
in }L^{p}(M,\mu )\}.
\end{equation*}

Clearly, $\sigma _{p}^{\ast }\leq \sigma _{p}^{\#}$. Whenever the critical
exponents appear in our paper, we assume that they are finite. It is known
by \cite[Theorem 4.2]{BaudoinLecture2022} that when the chain condition
holds,
\begin{equation*}
\sigma _{p}^{\#}\leq 1+\frac{\alpha_1}{p}<\infty .
\end{equation*}

\subsubsection{Heat kernel-based $p$-energy norm}

It is well-known that, a heat kernel $\{p_{t}\}_{t>0}$ can induce a
Dirichlet form on $L^{2}(M,\mu )$ by
\begin{align*}
\mathcal{E}(u,u)& =\lim_{t\rightarrow 0^{+}}\mathcal{E}_{t}(u,u),\ u\in
L^{2}(M,\mu) \\
\mathcal{D}(\mathcal{E})& =\{u\in L^{2}(M,\mu ):\mathcal{E}(u,u)<\infty \}
\end{align*}%
(see for example \cite[Section 1.3]{FukushimaOshimaTakeda.2011.489}), where
\begin{equation}
\mathcal{E}_{t}(u,u):=\frac{1}{2t}\int_{M}\int_{M}|u(x)-u(y)|^{2}p_{t}(x,y)d%
\mu (y)d\mu (x).  \label{E_t}
\end{equation}%
The limit exists, since for any given $u\in L^{2}(M,\mu )$, $\mathcal{E}%
_{t}(u,u)$ decreases in $t$ by using the spectral theorem.

Following \cite
{AlonsoBaudoinchen2020JFA,Pietruska-Paluba.2010.}, given a heat kernel $\{p_{t}\}_{t>0}$, we denote its $
p $-energy semi-norm on $L^{p}(M,\mu )$ by
$\left(E_{p,\infty}^\sigma(\cdot)\right)^{1/p}$, where
\begin{equation*}
E_{p,\infty }^{\sigma }(u):=\sup_{t\in (0,R_{0}^{\beta ^{\ast }})}\frac{1}{%
t^{{p\sigma }/{\beta ^{\ast }}}}\int_{M}\int_{M}|u(x)-u(y)|^{p}p_{t}(x,y)d%
\mu (y)d\mu (x),
\end{equation*}
with its domain
\begin{equation*}
\mathcal{D}(E_{p,\infty }^{\sigma }):=\{u\in L^{p}(M,\mu ):E_{p,\infty
}^{\sigma }(u)<\infty \}.
\end{equation*}
We call $\left(E_{p,\infty}^\sigma(\cdot)\right)^{1/p}+||\cdot||_p$ the heat kernel-based $
p $-energy norm. This is a natural way to extend the Dirichlet form case for $p=2$ to $p\in
(1,\infty)$. $E_{p,\infty }^{\sigma_{p}^{\#}}$ is
equivalent to the $p$-energy on certain fractals (see Section \ref{sec5}).

To extend the BBM type characterization from $p=2$ to $1<p<\infty$, we define
\begin{equation*}
E_{p,p}^{\sigma }(u):=\int_{0}^{R_{0}^{\beta ^{\ast }}}\frac{1}{t^{{p\sigma }%
/{\beta ^{\ast }}}}\left( \int_{M}\int_{M}|u(x)-u(y)|^{p}p_{t}(x,y)d\mu
(x)d\mu (y)\right) \frac{dt}{t},
\end{equation*}%
with domain
\begin{equation*}
\mathcal{D}(E_{p,p}^{\sigma }):=\{u\in L^{p}(M,\mu ):E_{p,p}^{\sigma
}(u)<\infty \}.
\end{equation*}
It is known that (see for example \cite{Pietruska-Paluba.2008}), when the
two-sided heat kernel estimates hold, the proper scaling of $E_{2,2}^{\sigma
} $ converges to $E_{2,\infty }^{\sigma _{2}^{\#}}$ thanks to the fact that $%
E_{2,2}^{\sigma }$ is equivalent to the Dirichlet form defined by the
subordinated heat kernel. In this paper, we establish an analogous convergence of $E_{p,p}^{\sigma }$ to $E_{p,\infty }^{\sigma
_{p}^{\#}}$. However, $\left(E_{p,p}^{\sigma }\right)^{1/p}$ does not seem to be the $p $-energy semi-norm of the subordinated heat kernel, so we need weak-monotonicity
properties to replace the subordination technique in the case $p=2$ (see Theorem \ref%
{thm4.1}).

\subsection{Weak-monotonicity properties and main results}

We introduce some notions that will be used for weak-monotonicity
properties and rewrite the above notions in the following way. For $u\in
L^{p}(M,\mu )$ and $\sigma ,t>0$, denote
\begin{equation}
\Psi _{u}^{\sigma }(t)=\frac{1}{t^{{p\sigma }/{\beta ^{\ast }}}}%
\int_{M}\int_{M}|u(x)-u(y)|^{p}p_{t}(x,y)d\mu (y)d\mu (x),  \label{psi_u}
\end{equation}%
then
\begin{equation}
E_{p,\infty }^{\sigma }(u)=\sup_{t\in (0,R_{0}^{\beta ^{\ast }})}\Psi
_{u}^{\sigma }(t),\ E_{p,p}^{\sigma }(u)=\int_{0}^{R_{0}^{\beta ^{\ast
}}}\Psi _{u}^{\sigma }(t)\frac{dt}{t}.  \label{EEE}
\end{equation}

Similarly, denote
\begin{equation}
\Phi _{u}^{\sigma }(r)=r^{-p\sigma }\int_{M}\frac{1}{V(x,r)}%
\int_{B(x,r)}|u(x)-u(y)|^{p}d\mu (y)d\mu (x),  \label{phi_u}
\end{equation}%
then
\begin{equation*}
\lbrack u]_{B_{p,\infty }^{\sigma }}^{p}=\sup_{r\in (0,R_{0})}\Phi
_{u}^{\sigma }(r),\ [u]_{B_{p,p}^{\sigma }}^{p}=\int_{0}^{R_{0}}\Phi
_{u}^{\sigma }(r)\frac{dr}{r},\ [u] _{KS_{p,\infty }^{\sigma
}}^{p}=\limsup_{r\rightarrow 0}\Phi _{u}^{\sigma }(r).
\end{equation*}

\begin{definition}
We say that a metric measure space $(M,d,\mu )$ with heat kernel $%
\{p_{t}\}_{t>0}$ satisfies property $(KE)_{\sigma}$ with $\sigma>0$, if
there exists a constant $C>0$ such that for all $u\in \mathcal{D}%
(E_{p,\infty }^{\sigma })$,
\begin{equation}  \label{KE}
\sup_{t\in (0,R_{0}^{\beta ^{\ast }})}\Psi _{u}^{\sigma }(t)\leq
C\liminf_{t\rightarrow 0}\Psi _{u}^{\sigma }(t),  \tag*{$(KE)_{\sigma}$}
\end{equation}
where $\Psi _{u}^{\sigma }(t)$ is defined as in \eqref{psi_u}. We further
say that property $(\widetilde{KE})_{\sigma}$ is satisfied with $\sigma >0$, if there
exists a constant $C>0$ such that for all $u\in \mathcal{D}(E_{p,\infty
}^{\sigma }) $,
\begin{equation}  \label{wKE}
\limsup_{t\rightarrow 0}\Psi _{u}^{\sigma }(t)\leq C\liminf_{t\rightarrow
0}\Psi _{u}^{\sigma }(t),  \tag*{$(\widetilde{KE})_{\sigma}$}
\end{equation}
\end{definition}

\begin{remark}
\label{rk1}When $p=2$, property $(KE)_{\beta
^{\ast }/2}$ automatically holds as $\mathcal{E}_{t}(u,u)$ decreases in $t$ for any $u\in \mathcal{D}(E_{2,\infty}^{\beta^{\ast }/2 })$.
\end{remark}

\begin{definition}
We say that a metric measure space $(M,d,\mu )$ satisfies property $(NE)_{\sigma}$
with $\sigma >0$ if there exists $C>0$ such that for all $%
u\in B_{p,\infty }^{\sigma }$,
\begin{equation}  \label{NE1}
\sup_{r\in (0,R_{0})}\Phi _{u}^{\sigma }(r)\leq C\liminf_{r\rightarrow
0}\Phi _{u}^{\sigma }(r).  \tag*{$(NE)_{\sigma}$}
\end{equation}%
We further say that property $(\widetilde{NE})_{\sigma}$ is satisfied with $%
\sigma >0$, if there exists $C>0$ such that for all $u\in B_{p,\infty
}^{\sigma }$,
\begin{equation}  \label{wNE}
\limsup_{r\rightarrow 0}\Phi _{u}^{\sigma }(r)\leq C\liminf_{r\rightarrow
0}\Phi _{u}^{\sigma }(r).  \tag*{$(\widetilde{NE})_{\sigma}$}
\end{equation}
\end{definition}

\begin{remark}
It is known from \cite[Lemma 4.7]{BaudoinLecture2022} that, property
(NE)$_\sigma$ can only hold when $\sigma \geq \sigma _{p}^{\#}$. So property $(NE)_{\sigma}$ is only interested when $\sigma=\sigma _{p}^{\#},$ and
whenever we say that property $(NE)_{\sigma}$ is satisfied, we automatically assume $%
\sigma=\sigma _{p}^{\#}$. Recently, property $(NE)_{\sigma}$ was verified on nested
fractals by Chang and the authors \cite[Theorem 2.3]{CGYZ24} with the help
of a lemma in this paper, on Sierpi\'nski carpets for $p>\dim_{ARC}(K, d)$
by Yang \cite[Theorem 2.8]{Yang23} and for all $p>1$ by Murugan and Shimizu
\cite[Theorem 1.4]{MuruganShimizu23}. Kajino and Shimizu \cite[Section 5]%
{KS24} studied property $(NE)_{\sigma}$ under the `$p$-contraction property' based on
Kigami's partition of metric spaces \cite{Kigami2022penergy}.
\end{remark}

Our first result is that, two kinds of BBM type characterization under $(KE)_{\sigma}$
and $(NE)_{\sigma}$ are established in Theorems \ref{thm4.1} and \ref{thm4.2},
without using heat kernel estimates.

Our second result is that, under the heat kernel estimates (UHE) and (LHE), we
show the equivalence of the heat kernel-based $p$-energy semi-norms and
Besov-Korevaar-Schoen semi-norms on bounded and unbounded metric measure spaces
in Lemma \ref{thm1}, and the following weak-monotonicity equivalence.

\begin{theorem}
\label{thm4}If a metric measure space $(M,d,\mu )$ admits a heat kernel $%
\{p_{t}\}_{t>0}$ satisfying (UHE) and (LHE), then we have the following
equivalences for $\sigma>0$:

\begin{itemize}
\item[(i)~] $(\widetilde{KE})_{\sigma}\Longleftrightarrow (\widetilde{NE})_{\sigma}$,

\item[(ii)] $(KE)_{\sigma}\Longleftrightarrow (NE)_{\sigma}$.
\end{itemize}
\end{theorem}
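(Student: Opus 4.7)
The plan is to reduce both equivalences to a careful two-sided comparison between $\Psi_u^\sigma(t)$ and $\Phi_u^\sigma(r)$ under the sub-Gaussian heat kernel estimate, combined with Theorem~\ref{thm1}, which already yields $\sup_t \Psi_u^\sigma(t) \asymp \sup_r \Phi_u^\sigma(r)$ and $\limsup_{t\to 0}\Psi_u^\sigma(t) \asymp \limsup_{r\to 0}\Phi_u^\sigma(r)$. The sub-Gaussian lower estimate restricted to $\{d(x,y) < t^{1/\beta^{\ast}}\}$, together with the $\alpha$-regularity implied by \eqref{hk}, gives immediately $\Psi_u^\sigma(t) \geq c\,\Phi_u^\sigma(t^{1/\beta^{\ast}})$ and hence $\liminf_{t\to 0}\Psi_u^\sigma(t) \geq c\,\liminf_{r\to 0}\Phi_u^\sigma(r)$. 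For the upper estimate I introduce a tunable parameter $\lambda > 1$ and decompose $M\times M$ into an enlarged near-diagonal $\{d(x,y) < \lambda r\}$ and dyadic annuli $\{2^{k}\lambda r \leq d(x,y) < 2^{k+1}\lambda r\}$ with $r = t^{1/\beta^{\ast}}$; piecewise use of the sub-Gaussian upper bound and $\alpha$-regularity produces
\[
\Psi_u^\sigma(t) \leq C\lambda^{\alpha+p\sigma}\,\Phi_u^\sigma(\lambda r) + \sum_{k\geq 0} \omega_k(\lambda)\, \Phi_u^\sigma(2^{k+1}\lambda r),
\]
where $\omega_k(\lambda) = C(2^{k+1}\lambda)^{\alpha+p\sigma}\exp\!\bigl(-c(2^k\lambda)^{\beta^{\ast}/(\beta^{\ast}-1)}\bigr)$, and the tail coefficient $\tau_\lambda := \sum_{k\geq 0}\omega_k(\lambda) \to 0$ as $\lambda \to \infty$ because the double-exponential factor ultimately overwhelms every polynomial in $\lambda$.

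The easy directions $(NE)\Rightarrow(KE)$ and $(\widetilde{NE})\Rightarrow(\widetilde{KE})$ follow by direct chaining: assuming $(NE)$, one has $\sup_t\Psi \asymp \sup_r\Phi \leq C\,\liminf_r\Phi \leq C'\,\liminf_t\Psi$ using Theorem~\ref{thm1} and the lower bound above, and the $\widetilde{\cdot}$ version is identical with $\limsup$ in place of $\sup$ via the $KS$-equivalence in Theorem~\ref{thm1}. For the harder directions I take $\liminf_{t\to 0}$ in the upper bound. The first term's argument $\lambda t^{1/\beta^{\ast}}$ tends to zero with $t$, so its $\liminf$ equals $\liminf_{r\to 0}\Phi_u^\sigma(r)$; for the tail, reverse Fatou on the counting measure (legitimate because $\Phi_u^\sigma \leq [u]_{B_{p,\infty}^\sigma}^p < \infty$ by Theorem~\ref{thm1} and $\sum_k\omega_k(\lambda) < \infty$) lets me interchange $\limsup_t$ with summation, and each inner $\limsup$ equals $\limsup_{r\to 0}\Phi_u^\sigma$. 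Combined with the general identity $\liminf(A+B)\leq \liminf A + \limsup B$, this yields
\[
\liminf_{t\to 0}\Psi_u^\sigma(t) \leq C\lambda^{\alpha+p\sigma}\liminf_{r\to 0}\Phi_u^\sigma(r) + \tau_\lambda \limsup_{r\to 0}\Phi_u^\sigma(r).
\]

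The crux is the bootstrap. Assuming $(KE)$, Theorem~\ref{thm1} gives $\limsup_r\Phi \leq \sup_r\Phi \asymp \sup_t\Psi \leq C_{(KE)}\liminf_t\Psi$, so substituting into the displayed inequality yields $\liminf_t\Psi \leq C_\lambda\,\liminf_r\Phi + C'\tau_\lambda\,\liminf_t\Psi$; choosing $\lambda$ large enough that $C'\tau_\lambda \leq 1/2$ absorbs the tail, producing $\liminf_t\Psi \leq 2C_\lambda\,\liminf_r\Phi$, and a final application of $\sup_r\Phi \asymp \sup_t\Psi \leq C_{(KE)}\liminf_t\Psi$ delivers $(NE)$. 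The implication $(\widetilde{KE})\Rightarrow(\widetilde{NE})$ is obtained by the same bootstrap, using $\limsup_r\Phi \asymp \limsup_t\Psi \leq C_{(\widetilde{KE})}\liminf_t\Psi$ from Theorem~\ref{thm1} in place of the $\sup$ bound. The main obstacle is precisely securing a tail coefficient $\tau_\lambda$ that can be made arbitrarily small independent of $u$ despite the competing polynomial prefactor $\lambda^{\alpha+p\sigma}$; this relies decisively on the super-exponential sub-Gaussian decay rate $\beta^{\ast}/(\beta^{\ast}-1) > 1$ and would not close under mere polynomial off-diagonal decay.
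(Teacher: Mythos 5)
Your proposal is correct, and its skeleton is the paper's: establish the easy bound $\liminf_{t\to 0}\Psi_u^{\sigma}(t)\geq c\,\liminf_{r\to 0}\Phi_u^{\sigma}(r)$ from the near-diagonal lower heat kernel estimate, prove a converse $\liminf$ inequality under (KE) (resp.\ $(\widetilde{KE})$) by an absorption argument with a tunable parameter, and then chain through the $\sup$- and $\limsup$-equivalences of Theorem \ref{thm1}. Where you genuinely diverge is the mechanism for the hard inequality. The paper splits at scale $\delta t^{1/\beta^{\ast}}$ and bounds the far-field piece by $A\,\Psi_u^{\sigma}(ct)$ with $A=A(\delta)\to 0$, using the self-comparison $p_t(x,y)\leq C e^{-c'\delta^{\beta^{\ast}/(\beta^{\ast}-1)}}p_{ct}(x,y)$ off the diagonal (Proposition \ref{hk_ct}, which uses both the upper and the lower sub-Gaussian bounds); this yields the recursion $\Psi_u^{\sigma}(t)\leq C_1\Phi_u^{\sigma}(\delta t^{1/\beta^{\ast}})+A\Psi_u^{\sigma}(ct)$, whose error term is absorbed after invoking $(\widetilde{KE})$. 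You instead expand the far field into dyadic annuli and bound it by $\tau_{\lambda}\limsup_{r\to 0}\Phi_u^{\sigma}(r)$ with $\tau_{\lambda}\to 0$, then convert that error into a small multiple of $\liminf_{t\to 0}\Psi_u^{\sigma}(t)$ via (KE) before absorbing; this uses only the upper bound and $\alpha$-regularity for the far field, at the price of the reverse-Fatou interchange (which the paper performs anyway in Lemma \ref{corol3.5}). Both routes close. Two points you should make explicit: the absorption requires $\liminf_{t\to 0}\Psi_u^{\sigma}(t)<\infty$, which is automatic since $u\in\mathcal{D}(E_{p,\infty}^{\sigma})$; and the domination $\Phi_u^{\sigma}(2^{k+1}\lambda t^{1/\beta^{\ast}})\leq[u]_{B_{p,\infty}^{\sigma}}^{p}$ is literally valid only for radii below $R_0$, so the finitely many boundary terms with $2^{k+1}\lambda t^{1/\beta^{\ast}}\geq R_0$ need a separate (harmless, since annuli beyond $\mathrm{diam}(M)$ are empty and the exponential factor kills the rest) remark --- the same care the paper exercises with its cutoff $1_{\{n\leq\log_2(R_0t^{-1/\beta^{\ast}})\}}$.
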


Now we introduce our results for fractals. In what follows, we assume that $K$ is a
connected homogeneous p.c.f. self-similar set. Let $\alpha =\func{dim}%
_{H}(K) $ and denote the $\alpha$-dimensional Hausdorff measure on $K$ by $\mu $.
Let $K^{F}:=\bigcup_{f\in F}f(K)$ be a fractal glue-up where $F$ is a proper
set of functions (see details in Subsection \ref{subsec5.1}) and equip it
with a natural glue-up measure $\mu ^{F}$ as in (\ref{muF}). This notion is
used to unify bounded and unbounded fractals. Following \cite{GaoYuZhang2022PA}, let
\begin{equation}
E_{n}^{(p)}(u):=\sum_{x,y\in V_{w},|w|=n}|u(x)-u(y)|^{p}\text{ \ \ and\ \ \ }%
\mathcal{E}_{n}^{\sigma }(u):=\rho ^{-n(p\sigma -\alpha )}E_{n}^{(p)}(u).
\label{EE}
\end{equation}%
Similarly, let
\begin{equation}
E_{n}^{(p),F}(u):=\sum_{f\in F}E_{n}^{(p)}(u\circ f)\text{ \ and\ }\mathcal{E}_{n}^{\sigma ,F}(u):=\rho ^{-n(p\sigma -\alpha
)}E_{n}^{(p),F}(u).  \label{p_energy}
\end{equation}%

\begin{definition}
\label{ve} We say that a fractal glue-up $K^{F}$ satisfies property $(
\normalfont{VE})_{\sigma}$ with $\sigma >0$, if there exists a constant $C>0$ such
that for all $u \in L^p(K^{F},\mu^F)$,
\begin{equation}  \label{VE}
\sup_{n\geq 0}\mathcal{E}_{n}^{\sigma ,F}(u)\leq C\liminf_{n\rightarrow
\infty }\mathcal{E}_{n}^{\sigma ,F}(u).  \tag*{$(VE)_{\sigma}$}
\end{equation}
We say that a fractal glue-up $K^{F}$ satisfies property $(\widetilde{VE})_{\sigma}$
with $\sigma >0$, if there exists $C>0$ such that for all $u \in
L^p(K^{F},\mu^F)$,
\begin{equation}  \label{wVE}
\limsup_{n\rightarrow \infty }\mathcal{E}_{n}^{\sigma ,F}(u)\leq
C\liminf_{n\rightarrow \infty }\mathcal{E}_{n}^{\sigma ,F}(u).
\tag*{$(\widetilde{VE})_{\sigma}$}
\end{equation}
\end{definition}

The proof of the following equivalence is more delicate. It involves
proper truncation, an improvement of the $p$-energies equivalence in \cite%
{GaoYuZhang2022PA} with annulus decomposition, and comparing the vertex-energies of
different levels. Also, analysis on unbounded fractals requires more
arguments than the bounded cases.

\begin{theorem}
\label{thm5}Let $K^{F}$ be a fractal glue-up, where $K$ is a connected
homogeneous p.c.f. self-similar set, then we have the following equivalences:

\begin{itemize}
\item[(i)~] $(\widetilde{VE})_{\sigma} \Longleftrightarrow (\widetilde{NE})_{\sigma}$ when $\sigma>\alpha /p$;

\item[(ii)] $(VE)_{\sigma
_{p}^{\#}} \Longleftrightarrow (NE)_{\sigma
_{p}^{\#}}$ when $\sigma
_{p}^{\#}>\alpha /p$ and $R_0=C_H$ (a positive number determined by $K^{F}$).
\end{itemize}
\end{theorem}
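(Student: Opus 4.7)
The plan is to reduce the continuous weak-monotonicity condition on $\Phi_u^\sigma(r)$ to a discrete weak-monotonicity condition on a level-$n$ graph $p$-energy $\mathcal{E}_n(u)$ on the cell-approximation of $K^F$, and then run the equivalence in both directions. First I would fix a contraction ratio $\lambda\in(0,1)$ for the p.c.f. system defining $K$ and show that $r\mapsto \Phi_u^\sigma(r)$ is roughly stable between consecutive dyadic scales $r_n=\lambda^{-n}$, in the sense that $\Phi_u^\sigma(r)\asymp \Phi_u^\sigma(r_n)$ for $r\in[r_{n+1},r_n]$. This step only uses the Ahlfors $\alpha$-regularity of the Hausdorff measure on $K^F$ and a standard chain argument to compare the double integrals $\int\!\!\int_{d(x,y)<r}|u(x)-u(y)|^p d\mu(y)d\mu(x)$ at nearby radii.

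The core technical step is a two-sided comparison at the discrete scales,
\[
\Phi_u^\sigma(r_n) \;\asymp\; \lambda^{n(p\sigma-\alpha)}\,\mathcal{E}_n(u),
\]
where $\mathcal{E}_n(u)$ is the level-$n$ graph $p$-energy that enters the definition of property (VE). For the upper bound I would decompose the inner integral over $n$-cells of $K^F$, use Ahlfors regularity to normalise $\mu(B(x,r_n))^{-1}\asymp \lambda^{n\alpha}$, and invoke the telescoping/path estimate from \cite{GaoYuZhang2022PA,Caoqiugu2022adv} that controls the oscillation of $u$ inside an $n$-cell by the $\ell^p$-sum of increments along graph edges. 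For the lower bound I would restrict the double integral to pairs $(x,y)$ lying in two adjacent $n$-cells sharing a boundary vertex of the p.c.f. structure, and use the connectedness of $K$ together with a Poincar\'e-type argument to extract from each such contribution a definite fraction of the corresponding edge increment $|u(\xi)-u(\eta)|^p$. The exponent $\sigma>\alpha/p$ ensures the renormalising factor $\lambda^{n(p\sigma-\alpha)}$ is the one appearing in the scaled discrete energy $\lambda_p^n\mathcal{E}_n(u)$ of property (VE).

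Combining the two comparisons, $\sup_{r\in(0,R_0)}\Phi_u^\sigma(r)$ becomes comparable to $\sup_n \lambda^{n(p\sigma-\alpha)}\mathcal{E}_n(u)$, while $\liminf_{r\to 0}\Phi_u^\sigma(r)$ becomes comparable to $\liminf_{n\to\infty}\lambda^{n(p\sigma-\alpha)}\mathcal{E}_n(u)$; the same equivalence with $\sup$ replaced by $\limsup$ holds on the continuous side. Hence (VE) (ratio of $\sup_n$ to $\liminf_n$ bounded) translates into (NE) (ratio of $\sup_r$ to $\liminf_r$ bounded), and conversely by restricting $r$ to the discrete scales $r_n$. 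Exactly the same dictionary yields the equivalence ($\widetilde{VE}$)~$\Longleftrightarrow$~($\widetilde{NE}$), proving both items (i) and (ii).

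The main obstacle will be making the lower comparison uniform on the unbounded glue-up $K^F$. On a single bounded copy of $K$ this comparison is essentially contained in \cite{GaoYuZhang2022PA}, but on $K^F$ I must control the discrete energy contributions from arbitrarily many translates of $K$ with constants independent of the patch; in particular the edges sitting at the seams between adjacent translates of $K$ must not be under- or over-counted. The finite ramification of $K$ together with the self-similar translation symmetry of $K^F$ should permit a covering by uniformly bounded patches and a telescoping reconstruction of $\mathcal{E}_n(u)$ from the continuous integral, but verifying that the junction terms contribute only $O(1)$-bounded multiplicative losses will require the most care.
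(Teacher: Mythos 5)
There is a genuine gap at your ``core technical step''. The claimed two\emph{-}sided pointwise comparison
\[
\Phi _{u}^{\sigma }(\rho ^{n})\;\asymp \;\rho ^{n(p\sigma -\alpha )}E_{n}^{(p),F}(u)=\mathcal{E}_{n}^{\sigma ,F}(u)
\]
at a \emph{single} level $n$ is false in both directions. For the upper bound, take $u$ vanishing on all level-$n$ vertices but oscillating inside the level-$n$ cells: then $\mathcal{E}_{n}^{\sigma ,F}(u)=0$ while $\Phi _{u}^{\sigma }(\rho ^{n})>0$. For the lower bound, take $u$ equal to $1$ at one vertex of $V_{n}$ and supported in a ball of radius $\rho ^{m}$ with $m\gg n$: then $\mathcal{E}_{n}^{\sigma ,F}(u)$ stays bounded below while $\Phi _{u}^{\sigma }(\rho ^{n})\lesssim \rho ^{-np\sigma +m\alpha }\rightarrow 0$. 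Your own described tools already betray this: the telescoping/path estimate controls the oscillation inside an $n$-cell only by the increments at \emph{all} levels $k\geq n$, and the Morrey/Poincar\'{e}-type extraction of a vertex increment from a double integral necessarily involves all scales finer than $\rho ^{n}$. The correct comparisons are therefore one-sided and nonlocal in the scale parameter: $\Phi _{u}^{\sigma }(\rho ^{n})\lesssim \sup_{k\geq n}\mathcal{E}_{k}^{\sigma ,F}(u)$ (Lemma \ref{lem6.2}) and $\mathcal{E}_{n}^{\sigma ,F}(u)\lesssim \sum_{k\geq 0}\rho ^{(p\sigma -\alpha -\delta )k}\,\Phi _{u}^{\sigma }(\rho ^{n+k})$ (Lemma \ref{lem6.3}). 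These transfer $\sup $ and $\limsup $ without difficulty (Corollary \ref{lem6.1}), but they do \emph{not} give an unconditional equivalence of the two $\liminf $'s, which is exactly what your dictionary needs.

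The paper closes this gap by a truncation argument that \emph{uses the weak-monotonicity hypothesis itself}: assuming $(\widetilde{VE})$ (resp.\ $(\widetilde{NE})$), the tail of the infinite sum over scales is bounded by a small constant times the relevant $\liminf $, the head is a finite sum of consecutive levels, and an elementary one-step monotonicity $E_{n}^{(p),F}(u)\leq CE_{n+1}^{(p),F}(u)$ collapses that finite sum back to a single level (Lemmas \ref{thm_3} and \ref{thm_4}). So each implication of the equivalence requires its own conditional $\liminf $-comparison; the statement you would need, namely $\liminf_{n}\mathcal{E}_{n}^{\sigma ,F}(u)\asymp \liminf_{n}\Phi _{u}^{\sigma ,F}(\rho ^{n})$ for all $u\in B_{p,\infty }^{\sigma ,F}$ without any hypothesis, is not available. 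Your concern about uniformity over the seams of the unbounded glue-up is, by contrast, handled routinely via the uniform finitely-joint property and is not where the difficulty lies.
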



\begin{remark}
The equivalence of the integral-type properties $(KE)_{\sigma}$, $(NE)_{\sigma}$ and of the discrete-type property $(VE)_{\sigma}$
does not rely on the p.c.f. property, but requires the continuity of the
functions in $B_{p,\infty}^{\sigma}$. The reason why we consider p.c.f.
fractals is to avoid complexity in defining $(VE)_{\sigma}$ properly. Also
verifying $(VE)_{\sigma}$ is relatively easier under the p.c.f. property (see
Proposition \ref{prop:critical} and Lemma \ref{yyqx}).
\end{remark}

Theorem \ref{thm4} and Theorem \ref{thm5} are very important in our paper
with many corollaries. In Section \ref{subsec5.4}, we will apply them to
obtain Corollary \ref{corol1.8}, and further deduce Theorem \ref{corol1.9}
focusing on nested fractals and their blow-ups. Moreover, they also deduce
Corollary \ref{jjjj} and Corollary \ref{jjj} that are important to Dirichlet
forms when $p=2$.

\subsection{Structure of the paper}

The organization of the paper is as follows. In Section \ref{sec4}, we
establish a BBM type characterization under appropriate weak-monotonicity
properties (Theorems \ref{thm4.1} and \ref{thm4.2}). In Section \ref%
{sec2}, we demonstrate the equivalence between heat kernel-based and
Besov-Korevaar-Schoen $p$-energy semi-norms under two-sided estimates (UHE) and (LHE) (Lemma \ref%
{thm1}), and the equivalence of the corresponding weak-monotonicity
properties (Theorem \ref{thm4}). In Section \ref{sec5}, we study fractal
glue-ups $K^F$. In Subsection \ref{subsec5.1}, we introduce some
geometric properties of $K^F$ and present some useful lemmas. In Subsection %
\ref{subsec5.2}, we verify $(VE)_{\sigma}$ for nested fractal glue-ups. In Subsection %
\ref{subsec5.3}, we prove Theorem \ref{thm5}. In Subsection \ref{subsec5.4},
we show that the BBM type characterization and Gagliardo-Nirenberg
inequality hold true for nested fractal blow-ups (Theorem \ref{corol1.9}).

\textbf{Notation}: The letters $C$, $C^{\prime }$, $C_{i}$,$C_{i}^{\prime }$%
, $C_{i}^{\prime\prime }$, $c$ are universal positive constants depending
only on $M$ which may vary at each occurrence. The sign $\asymp $ means that
both $\leq $ and $\geq $ are true with uniform values of $C$ depending only
on $M$. 

\section{Consequence of $(KE)_{\protect\sigma}$ and $(NE)_{\protect\sigma}$: BBM type characterization}

\label{sec4} In this section, we present our BBM type characterization for
heat kernel-based and Besov-Korevaar-Schoen $p$-energy semi-norms. Under the
settings in Section \ref{sec2}, these two BBM type characterization results
will coincide.

\subsection{BBM type characterization of heat kernel-based $p$-energy semi-norms}

In our proof, we need the following embedding relation.

\begin{proposition}
\label{Prop32}For any $\delta \in (0,\sigma )$, we have
\begin{equation}
\mathcal{D}(E_{p,\infty }^{\sigma })\subseteq \mathcal{D}(E_{p,p}^{\sigma
-\delta }).  \label{306}
\end{equation}
\end{proposition}

\begin{proof}
By the elementary inequality $|a-b|^{p}\leq 2^{p-1}(|a|^{p}+|b|^{p})$, we have
\begin{align}
& \int_{M}\int_{M}|u(x)-u(y)|^{p}p_{t}(x,y)d\mu (x)d\mu (y)  \notag \\
\leq & 2^{p-1}\int_{M}\int_{M}(|u(x)|^{p}+|u(y)|^{p})p_{t}(x,y)d\mu (x)d\mu
(y)  \notag \\
=& 2^{p}\int_{M}|u(x)|^{p}\left( \int_{M}p_{t}(x,y)d\mu (y)\right) d\mu
(x)\leq 2^{p}||u||_{p}^p.  \label{304}
\end{align}

Fix $\epsilon \in (0,R_{0}^{\beta ^{\ast }})$. By \eqref{304},%
\begin{eqnarray*}
E_{p,p}^{\sigma -\delta }(u) &=&\int_{0}^{R_{0}^{\beta ^{\ast }}}\frac{1}{t^{%
{p(\sigma -\delta )}/{\beta ^{\ast }}}}\left(
\int_{M}\int_{M}|u(x)-u(y)|^{p}p_{t}(x,y)d\mu (x)d\mu (y)\right) \frac{dt}{t}
\\
&\leq &\int_{0}^{\epsilon }\frac{1}{t^{{p(\sigma -\delta )}/{\beta ^{\ast }}}%
}\left( \int_{M}\int_{M}|u(x)-u(y)|^{p}p_{t}(x,y)d\mu (x)d\mu (y)\right)
\frac{dt}{t} \\
&&+\int_{\epsilon }^{\infty }\frac{1}{t^{{p(\sigma -\delta )}/{\beta ^{\ast }%
}}}\left( \int_{M}\int_{M}|u(x)-u(y)|^{p}p_{t}(x,y)d\mu (x)d\mu (y)\right)
\frac{dt}{t} \\
&\leq &\int_{0}^{\epsilon }t^{-1+p\delta /{\beta ^{\ast }}}dt\sup_{t\in
(0,R_{0}^{\beta ^{\ast }})}\Psi _{u}^{\sigma }(t)+2^{p}||u||_{p}^{p}\int_{\epsilon }^{\infty }\frac{dt}{t^{1+{p(\sigma -\delta )}/{\beta
^{\ast }}}} \\
&=&\frac{{\beta ^{\ast }}\epsilon ^{p\delta /{\beta ^{\ast }}}}{p\delta }%
\sup_{t\in (0,R_{0}^{\beta ^{\ast }})}\Psi _{u}^{\sigma }(t)+2^{p}\Vert
u\Vert _{p}^{p}\frac{{\beta ^{\ast }}\epsilon ^{-p(\sigma -\delta )/{\beta
^{\ast }}}}{p(\sigma -\delta )} \\
&=&\frac{{\beta ^{\ast }}\epsilon ^{p\delta /{\beta ^{\ast }}}}{p\delta }%
E_{p,\infty }^{\sigma }(u)+\frac{2^{p}{\beta ^{\ast }}||u||_{p}^p}{%
p(\sigma -\delta )}\epsilon ^{-p(\sigma -\delta )/{\beta ^{\ast }}}<\infty ,
\end{eqnarray*}%
thus showing \eqref{306}.
\end{proof}

\begin{theorem}
\label{thm4.1} Suppose that $(M,d,\mu )$ admits heat kernel $%
\{p_{t}\}_{t>0}$ satisfies property $(KE)_{\tilde{\sigma}_{p}}$ with some $\tilde{\sigma}_{p}>0$. Then there exists
a constant $C\geq 1$ such that for all $u\in \mathcal{D}(E_{p,\infty }^{%
\tilde{\sigma}_{p}})$,
\begin{equation}
C^{-1}{E}_{p,\infty }^{\tilde{\sigma}_{p}}(u)\leq \liminf_{\sigma \uparrow
\tilde{\sigma}_{p}}(\tilde{\sigma}_{p}-\sigma ){E}_{p,p}^{\sigma }(u)\leq
\limsup_{\sigma \uparrow \tilde{\sigma}_{p}}(\tilde{\sigma}_{p}-\sigma ){E}%
_{p,p}^{\sigma }(u)\leq C{E}_{p,\infty }^{\tilde{\sigma}_{p}}(u).
\label{eq5.1}
\end{equation}
\end{theorem}

\begin{proof}
Let $u\in \mathcal{D}(E_{p,\infty }^{\tilde{\sigma _{p}}})$ and $\sigma \in
(0,\tilde{\sigma}_{p})$. By Proposition \ref{Prop32}, $u\in \mathcal{D}%
(E_{p,p}^{\sigma })$. Note that $$\Psi _{u}^{\sigma }(t)=t^{p(\tilde{\sigma}%
_{p}-\sigma )/{\beta ^{\ast }}}\Psi _{u}^{\tilde{\sigma}_{p}}(t).$$

For the left-hand side, by \eqref{EEE}, we have
\begin{align}
\liminf_{\sigma \uparrow \tilde{\sigma}_{p}}(\tilde{\sigma}_{p}-\sigma ){E}%
_{p,p}^{\sigma }(u)=& \liminf_{\sigma \uparrow \tilde{\sigma}_{p}}(\tilde{%
\sigma}_{p}-\sigma )\int_{0}^{R_{0}^{{\beta ^{\ast }}}}t^{p(\tilde{\sigma}%
_{p}-\sigma )/{\beta ^{\ast }}}\Psi _{u}^{\tilde{\sigma}_{p}}(t)\frac{dt}{t}
\notag \\
\geq & \liminf_{\sigma \uparrow \tilde{\sigma}_{p}}(\tilde{\sigma}%
_{p}-\sigma )\int_{0}^{\tilde{\sigma}_{p}-\sigma }t^{p(\tilde{\sigma}%
_{p}-\sigma )/{\beta ^{\ast }}}\Psi _{u}^{\tilde{\sigma}_{p}}(t)\frac{dt}{t}
\notag \\
\geq & \liminf_{\sigma \uparrow \tilde{\sigma}_{p}}(\tilde{\sigma}%
_{p}-\sigma )\int_{0}^{\tilde{\sigma}_{p}-\sigma }t^{p(\tilde{\sigma}%
_{p}-\sigma )/{\beta ^{\ast }}}\frac{dt}{t}\inf_{t\in (0,\tilde{\sigma}%
_{p}-\sigma )}\Psi _{u}^{\tilde{\sigma}_{p}}(t)  \notag \\
=& \frac{{\beta ^{\ast }}}{p}\liminf_{\sigma \uparrow \tilde{\sigma}_{p}}(%
\tilde{\sigma}_{p}-\sigma )^{p(\tilde{\sigma}_{p}-\sigma )/{\beta ^{\ast }}%
}\inf_{t\in (0,\tilde{\sigma}_{p}-\sigma )}\Psi _{u}^{\tilde{\sigma}_{p}}(t)
\notag \\
=& \frac{{\beta ^{\ast }}}{p}\liminf_{\sigma \uparrow \tilde{\sigma}%
_{p}}\inf_{t\in (0,\tilde{\sigma}_{p}-\sigma )}\Psi _{u}^{\tilde{\sigma}%
_{p}}(t)=\frac{{\beta ^{\ast }}}{p}\liminf_{t\rightarrow 0}\Psi _{u}^{\tilde{%
\sigma}_{p}}(t).  \notag
\end{align}%
It follows from property $(KE)_{\tilde{\sigma}_{p}}$ that
\begin{equation*}
\liminf_{\sigma \uparrow \tilde{\sigma}_{p}}(\tilde{\sigma}_{p}-\sigma ){E}%
_{p,p}^{\sigma }(u)\geq \frac{C{\beta ^{\ast }}}{p}E_{p,\infty }^{\tilde{%
\sigma}_{p}}(u).
\end{equation*}

For the right-hand side, let $A\in (0,R_{0})$ be a finite positive number.
By (\ref{304}), for any $\sigma \in (0,\tilde{\sigma _{p}})$,
\begin{eqnarray*}
{E}_{p,p}^{\sigma }(u) &=&\int_{0}^{R_{0}^{\beta ^{\ast }}}\frac{1}{t^{{%
p\sigma }/{\beta ^{\ast }}}}\left(
\int_{M}\int_{M}|u(x)-u(y)|^{p}p_{t}(x,y)d\mu (x)d\mu (y)\right) \frac{dt}{t}
\\
&\leq &\int_{0}^{A^{\beta ^{\ast }}}\frac{1}{t^{{p\sigma }/{\beta ^{\ast }}}}%
\left( \int_{M}\int_{M}|u(x)-u(y)|^{p}p_{t}(x,y)d\mu (x)d\mu (y)\right)
\frac{dt}{t}+\int_{A^{\beta ^{\ast }}}^{\infty }\frac{2^{p}||u||
_{p}^{p}}{t^{{p\sigma }/{\beta ^{\ast }}}}\frac{dt}{t} \\
&=&\int_{0}^{A^{\beta ^{\ast }}}t^{p(\tilde{\sigma}_{p}-\sigma )/{\beta
^{\ast }}}\Psi _{u}^{\tilde{\sigma}_{p}}(t)\frac{dt}{t}+\frac{2^{p}{\beta
^{\ast }}||u||_{p}^p}{p\sigma A^{{p\sigma }}}.
\end{eqnarray*}%
It follows that
\begin{align}
\limsup_{\sigma \uparrow \tilde{\sigma}_{p}}(\tilde{\sigma}_{p}-\sigma ){E}%
_{p,p}^{\sigma }(u)& \leq \limsup_{\sigma \uparrow \tilde{\sigma}_{p}}(%
\tilde{\sigma}_{p}-\sigma )\int_{0}^{A^{\beta ^{\ast }}}t^{p(\tilde{\sigma}%
_{p}-\sigma )/{\beta ^{\ast }}}\Psi _{u}^{\tilde{\sigma}_{p}}(t)\frac{dt}{t}%
+\limsup_{\sigma \uparrow \tilde{\sigma}_{p}}(\tilde{\sigma}_{p}-\sigma )%
\frac{2^{p}{\beta ^{\ast }}||u||_{p}^p}{p\sigma A^{{p\sigma }}}
\notag \\
& \leq \limsup_{\sigma \uparrow \tilde{\sigma}_{p}}(\tilde{\sigma}%
_{p}-\sigma )\int_{0}^{A^{\beta ^{\ast }}}t^{p(\tilde{\sigma}_{p}-\sigma )/{%
\beta ^{\ast }}}\frac{dt}{t}\sup_{t\in (0,A^{{\beta ^{\ast }}})}\Psi _{u}^{%
\tilde{\sigma}_{p}}(t)+0  \notag \\
& =\frac{{\beta ^{\ast }}}{p}\limsup_{\sigma \uparrow \tilde{\sigma}%
_{p}}A^{p(\tilde{\sigma}_{p}-\sigma )}\sup_{t\in (0,A^{{\beta ^{\ast }}%
})}\Psi _{u}^{\tilde{\sigma}_{p}}(t)  \notag \\
& =\frac{{\beta ^{\ast }}}{p}\sup_{t\in (0,A^{{\beta ^{\ast }}})}\Psi _{u}^{%
\tilde{\sigma}_{p}}(t)\leq \frac{{\beta ^{\ast }}}{p}E_{p,\infty }^{\tilde{%
\sigma}_{p}}(u).  \label{NE_right}
\end{align}%
The proof is complete.
\end{proof}

\subsection{BBM type characterization of Besov-Korevaar-Schoen $p$-energy
semi-norms}

The proof of this version is similar to the previous one. 

\begin{proposition}
\label{Prop31}For any $\delta \in (0,\sigma )$, we have
\begin{equation}
B_{p,\infty }^{\sigma }\subseteq B_{p,p}^{\sigma -\delta }.  \label{301}
\end{equation}
\end{proposition}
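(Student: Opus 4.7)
The proof follows the same two-step pattern as Proposition \ref{Prop32}, with $\Phi_u^\sigma(r)$ replacing $\Psi_u^\sigma(t)$, but the easier of the two inclusions is no longer ``clear'': it requires an almost-monotonicity of $r \mapsto \Phi_u^\sigma(r)$ extracted from the doubling condition on $\mu$.

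For $B_{p,\infty}^\sigma \subseteq B_{p,p}^{\sigma-\delta}$, observe that $\Phi_u^{\sigma-\delta}(r) = r^{p\delta}\Phi_u^\sigma(r)$, so that $[u]^p_{B_{p,p}^{\sigma-\delta}} = \int_0^{R_0} r^{p\delta}\Phi_u^\sigma(r)\,dr/r$. First record the trivial uniform estimate
\[
\Phi_u^\sigma(r) \leq C\|u\|_p^p\, r^{-p\sigma},
\]
obtained by applying $|u(x)-u(y)|^p \leq 2^{p-1}(|u(x)|^p+|u(y)|^p)$, Fubini, and the doubling comparison $\mu(B(x,r)) \asymp \mu(B(y,r))$ for $y \in B(x,r)$, in the same spirit as \eqref{304}. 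Then split the integral at a fixed $\epsilon \in (0,R_0)$: on $(0,\epsilon)$ bound $\Phi_u^\sigma(r) \leq [u]^p_{B_{p,\infty}^\sigma}$ and integrate $r^{p\delta-1}$; on $(\epsilon,R_0)$ substitute the uniform estimate and integrate $r^{p(\delta-\sigma)-1}$, which is finite thanks to $\delta < \sigma$ (regardless of whether $R_0$ is finite or infinite).

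For $B_{p,p}^\sigma \subseteq B_{p,\infty}^\sigma$, the key step is the almost monotonicity
\[
\Phi_u^\sigma(r) \leq C\,\Phi_u^\sigma(r') \qquad \text{whenever } \tfrac{r}{2} \leq r' \leq 2r,
\]
obtained by combining the inclusion $B(x,r\wedge r') \subseteq B(x,r\vee r')$, doubling $\mu(B(x,r\vee r')) \leq C\mu(B(x,r\wedge r'))$, and the two-sided bound $2^{-p\sigma} \leq (r/r')^{p\sigma} \leq 2^{p\sigma}$. Given $r \in (0,R_0)$, choose an interval $I_r \subset (0,R_0)$ of the form $(r,2r)$ when $2r \leq R_0$ and $(r/2,r)$ otherwise; on $I_r$ we have $\Phi_u^\sigma(\rho) \geq \Phi_u^\sigma(r)/C$, so integrating against $d\rho/\rho$ gives
\[
\Phi_u^\sigma(r) \leq C' \int_{I_r}\Phi_u^\sigma(\rho)\,\frac{d\rho}{\rho} \leq C'[u]^p_{B_{p,p}^\sigma}.
\]
Taking the supremum in $r$ yields the desired inclusion together with the quantitative bound $[u]^p_{B_{p,\infty}^\sigma} \leq C'[u]^p_{B_{p,p}^\sigma}$.

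The only non-routine ingredient is the almost-monotonicity, which in contrast to the heat-kernel case (where $\Psi_u^\sigma(t)$ inherits a quasi-decrease from the semigroup property, making the analogous first inclusion of Proposition \ref{Prop32} immediate) must be read off the doubling geometry of $(M,d,\mu)$ alone. Once this is in hand, the rest is a routine splitting-at-$\epsilon$ calculation, just as in Proposition \ref{Prop32}.
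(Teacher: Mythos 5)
Your treatment of the inclusion $B_{p,\infty }^{\sigma }\subseteq B_{p,p}^{\sigma -\delta }$ is exactly the paper's argument: the uniform bound $\Phi _{u}^{\sigma }(r)\leq C\Vert u\Vert _{p}^{p}r^{-p\sigma }$ is the paper's \eqref{300}, and the splitting at a fixed $\epsilon $ is identical, so that half is fine. For $B_{p,p}^{\sigma }\subseteq B_{p,\infty }^{\sigma }$ the paper simply writes ``clearly'', and you are right that something should be said: finiteness of $\int_{0}^{R_{0}}\Phi _{u}^{\sigma }(r)\,dr/r$ does not by itself control $\sup_{r}\Phi _{u}^{\sigma }(r)$ without some regularity of $r\mapsto \Phi _{u}^{\sigma }(r)$. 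However, your ``almost monotonicity'' is only valid in one direction. The chain you describe --- the inclusion $B(x,r\wedge r^{\prime })\subseteq B(x,r\vee r^{\prime })$ together with doubling and the comparability of the prefactors --- yields
\begin{equation*}
\Phi _{u}^{\sigma }(r\wedge r^{\prime })\leq C\,\Phi _{u}^{\sigma }(r\vee r^{\prime }),
\end{equation*}
i.e.\ the value at the \emph{smaller} radius is controlled by the value at the \emph{larger} one. The reverse inequality $\Phi _{u}^{\sigma }(r)\leq C\Phi _{u}^{\sigma }(r^{\prime })$ for $r^{\prime }\in (r/2,r)$ would require controlling the energy over the larger ball by the energy over the smaller one, which is precisely the sort of weak-monotonicity statement (property (NE) and its relatives) that the paper treats as a nontrivial hypothesis; it does not follow from doubling alone. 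Your argument invokes this false direction exactly in the case $2r>R_{0}$, where you take $I_{r}=(r/2,r)$.

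The gap is real but easily patched and does not affect the conclusion. When $R_{0}=\infty $ the bad case never occurs. When $R_{0}<\infty $, for $r\in (R_{0}/2,R_{0})$ use instead the uniform estimate you already recorded, $\Phi _{u}^{\sigma }(r)\leq C\Vert u\Vert _{p}^{p}r^{-p\sigma }\leq C\Vert u\Vert _{p}^{p}(R_{0}/2)^{-p\sigma }$, which is finite because membership in $L^{p}$ is built into the definition of $B_{p,p}^{\sigma }$. This gives $[u]_{B_{p,\infty }^{\sigma }}^{p}\leq C\bigl([u]_{B_{p,p}^{\sigma }}^{p}+\Vert u\Vert _{p}^{p}\bigr)$, which is enough for the set inclusion \eqref{301}; your claimed pure seminorm bound $[u]_{B_{p,\infty }^{\sigma }}^{p}\leq C^{\prime }[u]_{B_{p,p}^{\sigma }}^{p}$ should be weakened accordingly, since your proof of it rests on the invalid half of the monotonicity.
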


\begin{proof}
By the elementary inequality $|a-b|^{p}\leq 2^{p-1}(|a|^{p}+|b|^{p})$, we
have
\begin{align}
& \int_{M}\frac{1}{V(x,r)}\int_{B(x,r)}|u(x)-u(y)|^{p}d\mu (y)d\mu (x)
\notag \\
\leq & 2^{p-1}\int_{M}\frac{1}{V(x,r)}\int_{B(x,r)}(|u(x)|^{p}+|u(y)|^{p})d%
\mu (y)d\mu (x)  \notag \\
=& 2^{p-1}\left( ||u||_{p}^p+\int_{M}\int_{M}\frac{\mathbf{1}%
_{\{d(y,x)<r\}}}{V(x,r)}|u(y)|^{p}d\mu (y)d\mu (x)\right)  \notag \\
=& 2^{p-1}\left( ||u||_{p}^p+\int_{M}\left( \int_{B(y,r)}\frac{1}{%
V(x,r)}d\mu (x)\right) |u(y)|^{p}d\mu (y)\right)  \notag \\
\leq & 2^{p-1}\left( ||u||_{p}^p+\sup_{x\in M}\frac{V(x,2r)}{V(x,r)%
}\int_{M}|u(y)|^{p}d\mu (y)\right) \leq C||u||_{p}^p.  \label{300}
\end{align}

Fix $\epsilon \in (0,R_{0})$. By \eqref{300},%
\begin{eqnarray*}
\lbrack u]_{B_{p,p}^{\sigma -\delta }}^{p} &=&\int_{0}^{R_{0}}r^{-(p\sigma
-p\delta )}\int_{M}\frac{1}{V(x,r)}\int_{B(x,r)}|u(x)-u(y)|^{p}d\mu (y)d\mu
(x)\frac{dr}{r} \\
&\leq &\int_{0}^{\epsilon }r^{-(p\sigma -p\delta )}\int_{M}\frac{1}{V(x,r)}%
\int_{B(x,r)}|u(x)-u(y)|^{p}d\mu (y)d\mu (x)\frac{dr}{r} \\
&&+\int_{\epsilon }^{\infty }r^{-(p\sigma -p\delta )}\int_{M}\frac{1}{V(x,r)}%
\int_{B(x,r)}|u(x)-u(y)|^{p}d\mu (y)d\mu (x)\frac{dr}{r} \\
&\leq &\int_{0}^{\epsilon }r^{-1+p\delta }dr\sup_{r\in (0,R_{0})}\Phi
_{u}^{\sigma }(r)+C||u||_{p}^p\int_{\epsilon }^{\infty
}r^{-(p\sigma -p\delta )}\frac{dr}{r} \\
&=&\frac{\epsilon ^{p\delta }}{p\delta }\sup_{r\in (0,R_{0})}\Phi
_{u}^{\sigma }(r)+C||u||_{p}^p\frac{\epsilon ^{-p(\sigma -\delta )}%
}{p(\sigma -\delta )}=\frac{\epsilon ^{p\delta }}{p\delta }[u]_{B_{p,\infty
}^{\sigma }}^{p}+C^{\prime }||u||_{p}^p<\infty ,
\end{eqnarray*}%
thus showing \eqref{301}.
\end{proof}

\begin{theorem}
\label{thm4.2} Suppose that $(M,d,\mu )$ satisfies property $(NE)_{\sigma _{p}^{\#}}$, then there
exists a positive constant $C$ such that for all $u\in B_{p,\infty }^{\sigma
_{p}^{\#}}$,
\begin{equation}
C^{-1}[u]_{{B}_{p,\infty }^{\sigma _{p}^{\#}}}^{p}\leq \liminf_{\sigma
\uparrow \sigma _{p}^{\#}}(\sigma _{p}^{\#}-\sigma )[u]_{{B}_{p,p}^{\sigma
}}^{p}\leq \limsup_{\sigma \uparrow \sigma _{p}^{\#}}(\sigma
_{p}^{\#}-\sigma )[u]_{{B}_{p,p}^{\sigma }}^{p}\leq C[u]_{{B}_{p,\infty
}^{\sigma _{p}^{\#}}}^{p}.  \label{NE_conv}
\end{equation}%
Suppose that $(M,d,\mu )$ satisfies property $(\widetilde{NE})_{\sigma_p}$ with some $\sigma_{p}>0$, then there exists a positive constant $C$ such that for all $u\in
B_{p,\infty }^{\sigma _{p}}$,
\begin{equation}
C^{-1}[u] _{{KS}_{p,\infty }^{\sigma _{p}}}^{p}\leq \liminf_{\sigma
\uparrow \sigma _{p}}(\sigma _{p}-\sigma )[u]_{{B}_{p,p}^{\sigma }}^{p}\leq
\limsup_{\sigma \uparrow \sigma _{p}}(\sigma _{p}-\sigma )[u]_{{B}%
_{p,p}^{\sigma }}^{p}\leq C[u] _{{KS}_{p,\infty }^{\sigma
_{p}}}^{p}.  \label{KS_conv}
\end{equation}
\end{theorem}

\begin{proof}
We show \eqref{NE_conv} first. For the left-hand side, similarly, we have
that for any $\sigma _{p}>0,$
\begin{align}
\liminf_{\sigma \uparrow \sigma _{p}}(\sigma _{p}-\sigma )[u]_{{B}%
_{p,p}^{\sigma }}^{p}=& \liminf_{\sigma \uparrow \sigma _{p}}(\sigma
_{p}-\sigma )\int_{0}^{R_{0}}r^{p(\sigma _{p}-\sigma )}\Phi _{u}^{\sigma
_{p}}(r)\frac{dr}{r}  \notag \\
\geq & \liminf_{\sigma \uparrow \sigma _{p}}(\sigma _{p}-\sigma
)\int_{0}^{\sigma _{p}-\sigma }r^{p(\sigma _{p}-\sigma )}\Phi _{u}^{\sigma
_{p}}(r)\frac{dr}{r}  \notag \\
\geq & \liminf_{\sigma \uparrow \sigma _{p}}(\sigma _{p}-\sigma
)\int_{0}^{\sigma _{p}-\sigma }r^{p(\sigma _{p}-\sigma )}\frac{dr}{r}%
\inf_{r\in (0,\sigma _{p}-\sigma )}\Phi _{u}^{\sigma _{p}}(r)  \notag \\
=& \frac{1}{p}\liminf_{\sigma \uparrow \sigma _{p}}\inf_{r\in (0,\sigma
_{p}-\sigma )}\Phi _{u}^{\sigma _{p}}(r)=\frac{1}{p}\liminf_{r\rightarrow
0}\Phi _{u}^{\sigma _{p}}(r).  \label{408}
\end{align}%
Therefore, by property $(NE)_{\sigma _{p}^{\#}}$, we have for $\sigma _{p}=\sigma _{p}^{\#}$ in \eqref{408} that
\begin{equation}
\liminf_{\sigma \uparrow \sigma _{p}^{\#}}(\sigma _{p}^{\#}-\sigma )[u]_{{B}%
_{p,p}^{\sigma }}^{p}\geq \frac{C^{-1}}{p}[u]_{{B}_{p,\infty }^{\sigma
_{p}^{\#}}}^{p}.  \label{NE_left}
\end{equation}

For the other side, let $A\in (0,R_{0})$ be a finite positive number. We
have from \eqref{300} that
\begin{eqnarray}
\lbrack u]_{{B}_{p,p}^{\sigma }}^{p} &=&\int_{0}^{R_{0}}r^{-p\sigma }\left(
\int_{M}\frac{1}{V(x,r)}\int_{B(x,r)}|u(x)-u(y)|^{p}d\mu (y)d\mu (x)\right)
\frac{dr}{r}  \notag \\
&\leq &\int_{0}^{A}r^{-p\sigma }\left( \int_{M}\frac{1}{V(x,r)}%
\int_{B(x,r)}|u(x)-u(y)|^{p}d\mu (y)d\mu (x)\right) \frac{dr}{r}%
+C\int_{A}^{\infty }r^{-p\sigma }||u||_{p}^p\frac{dr}{r}  \notag \\
&=&\int_{0}^{A}r^{p(\sigma _{p}^{\#}-\sigma )}\Phi _{u}^{\sigma _{p}^{\#}}(r)%
\frac{dr}{r}+\frac{C||u||_{p}^p}{p\sigma A^{p\sigma }}.
\label{300-1}
\end{eqnarray}%
It follows that
\begin{align}
\limsup_{\sigma \uparrow \sigma _{p}^{\#}}(\sigma _{p}^{\#}-\sigma )[u]_{{B}%
_{p,p}^{\sigma }}^{p}& \leq \limsup_{\sigma \uparrow \sigma
_{p}^{\#}}(\sigma _{p}^{\#}-\sigma )\int_{0}^{A}r^{p(\sigma _{p}^{\#}-\sigma
)}\Phi _{u}^{\sigma _{p}^{\#}}(r)\frac{dr}{r}+\limsup_{\sigma \uparrow
\sigma _{p}^{\#}}(\sigma _{p}^{\#}-\sigma )\frac{C||u||_{p}^p}{%
p\sigma A^{p\sigma }}  \notag \\
& \leq \limsup_{\sigma \uparrow \sigma _{p}^{\#}}(\sigma _{p}^{\#}-\sigma
)\int_{0}^{A}r^{p(\sigma _{p}^{\#}-\sigma )}\frac{dr}{r}\sup_{r\in
(0,R_{0})}\Phi _{u}^{\sigma _{p}^{\#}}(r)+0  \notag \\
& =\frac{1}{p}\limsup_{\sigma \uparrow \sigma _{p}^{\#}}A^{p(\sigma
_{p}^{\#}-\sigma )}\sup_{r\in (0,R_{0})}\Phi _{u}^{\sigma _{p}^{\#}}(r)=%
\frac{1}{p}[u]_{B_{p,\infty }^{\sigma _{p}^{\#}}}^{p},  \label{42}
\end{align}%
thus showing \eqref{NE_conv}.

Next, we show \eqref{KS_conv}. By property $(\widetilde{NE})_{\sigma_p}$ and \eqref{408}, we have
\begin{equation}
\liminf_{\sigma \uparrow \sigma _{p}}(\sigma _{p}-\sigma )[u]_{{B}%
_{p,p}^{\sigma }}^{p}\geq C^{-1}[u] _{KS_{p,\infty}^{\sigma _{p}}}^{p}.
\label{KS_left}
\end{equation}%
Then for the other side of \eqref{KS_conv}, by the definition of $\limsup $,
there exists $\epsilon \in (0,R_{0})$ such that
\begin{equation*}
\sup_{r\in (0,\epsilon )}\Phi _{u}^{\sigma _{p}}(r)\leq
2\limsup_{r\rightarrow 0}\Phi _{u}^{\sigma _{p}}(r).
\end{equation*}%
It follows from replacing $\sigma _{p}^{\#}$ by $\sigma _{p}$ and taking $%
A=\epsilon $ in (\ref{300-1}) that,%
\begin{align}
\limsup_{\sigma \uparrow \sigma _{p}}(\sigma _{p}-\sigma )[u]_{{B}%
_{p,p}^{\sigma }}^{p}& \leq \limsup_{\sigma \uparrow \sigma _{p}}(\sigma
_{p}-\sigma )\int_{0}^{\epsilon }r^{p(\sigma _{p}-\sigma )}\Phi _{u}^{\sigma
_{p}}(r)\frac{dr}{r}+\limsup_{\sigma \uparrow \sigma _{p}}(\sigma
_{p}-\sigma )\frac{C|| u||_{p}^p}{p\sigma \epsilon ^{p\sigma }}
\notag \\
& \leq \limsup_{\sigma \uparrow \sigma _{p}}(\sigma _{p}-\sigma
)\int_{0}^{\epsilon }r^{p(\sigma _{p}-\sigma )}\frac{dr}{r}\sup_{r\in
(0,\epsilon )}\Phi _{u}^{\sigma _{p}}(r)+0  \notag \\
& =\frac{1}{p}\sup_{r\in (0,\epsilon )}\Phi _{u}^{\sigma _{p}}(r)\leq \frac{2%
}{p}\limsup_{r\rightarrow 0}\Phi _{u}^{\sigma _{p}}(r)=\frac{2}{p}[
u] _{{KS}_{p,\infty }^{\sigma _{p}}}^{p}.  \label{KS_right}
\end{align}%
The proof is complete.
\end{proof}

\section{Equivalences on metric measure spaces}

\label{sec2} In this section, we assume that the underlying metric measure space $(M,d,\mu )$ admits
a heat kernel satisfying (UHE) and (LHE).


\subsection{Equivalence of integral-type semi-norms}

In this subsection, fix $\sigma>0$. The main result of this subsection is
the following.

\begin{lemma}
\label{thm1}If $(M,d,\mu )$ admits a heat kernel $%
\{p_{t}\}_{t>0}$ satisfying (UHE) and (LHE), then $\mathcal{D}(E_{p,\infty
}^{\sigma })=B_{p,\infty }^{\sigma }$ and for all $u\in B_{p,\infty
}^{\sigma }$,
\begin{equation}
E_{p,\infty }^{\sigma }(u)\asymp \lbrack u]_{B_{p,\infty }^{\sigma }}^{p}%
\text{\ and\ }[u] _{KS_{p,\infty }^{\sigma }}^{p}\asymp
\limsup_{t\rightarrow 0}\Psi _{u}^{\sigma }(t).  \label{eq1.7}
\end{equation}%
In addition, $\mathcal{D}(E_{p,p}^{\sigma })=B_{p,p}^{\sigma }$ and for all $%
u\in B_{p,p}^{\sigma }$,
\begin{equation}
E_{p,p}^{\sigma }(u)\asymp \lbrack u]_{B_{p,p}^{\sigma }}^{p}.  \label{eq1.8}
\end{equation}
\end{lemma}

We prove Lemma \ref{thm1} by pure elementary analysis (without probability
arguments) under slightly milder assumptions, and split the proof into
Propositions \ref{lem3.1}-\ref{corol3.5}. Part of Lemma \ref{thm1} was also
studied by K.~Pietruska-Pa{\l }uba in \cite[Theorems 3.1 and 3.2]%
{Pietruska-Paluba.2010.} using probability arguments, and by Alonso Ruiz,
Baudoin et al. in \cite[Proposition 4.2]{AlonsoBaudoinchen2020CVPDE} under Gaussian
heat kernel estimate. Recently, Cao and Qiu in \cite{Caoqiu23} proved
the equivalence of the Korevaar-Schoen norm and another type of Besov norm.

\begin{proposition}
\label{lem3.1} If $(M,d,\mu )$ admits a heat kernel $\{p_{t}\}_{t>0}$
satisfying (LHE), then there exists a positive constant $C$ such that for
all $u\in \mathcal{D}(E_{p,p}^{\sigma })$,
\begin{equation}
\lbrack u]_{B_{p,p}^{\sigma }}^{p}\leq CE_{p,p}^{\sigma }(u).  \label{eq3.1}
\end{equation}
\end{proposition}

\begin{proof}
By (LHE), we have
\begin{align}
& \frac{1}{t^{{p\sigma }/{\beta ^{\ast }}}}\int_{M}\int_{B(x, t^{{1}/{\beta
^{\ast }}})}|u(x)-u(y)|^{p}p_{t}(x,y)d\mu (y)d\mu (x)  \notag \\
\geq& \frac{c_{1}}{t^{p\sigma /\beta ^{\ast }}}\int_{M}\frac{1}{%
V(x,t^{1/\beta ^{\ast }})}\int_{B(x, t^{{1}/{\beta ^{\ast }}%
})}|u(x)-u(y)|^{p} \exp \left(-c_2\left(\frac{d(x,y)}{t^{1/\beta^*}}\right)^{%
\frac{\beta^*}{\beta^*-1}}\right)d\mu (y)d\mu (x)  \notag \\
\geq & c_{1}e^{-c_2} \Phi_u^{\sigma}( t^{1/\beta^*}) .  \label{eq3-2}
\end{align}
Therefore,
\begin{align*}
E_{p,p}^{\sigma }(u)&=\int_{0}^{R_{0}^{\beta ^{\ast }}}\frac{1}{t^{{p\sigma }%
/{\beta ^{\ast }}}}\left( \int_{M}\int_{M}|u(x)-u(y)|^{p}p_{t}(x,y)d\mu
(x)d\mu (y)\right) \frac{dt}{t} \\
& \geq \beta ^{\ast }c_{1}e^{-c_2}\int_{0}^{R_{0}} \Phi_u^{\sigma}(r)\frac{dr%
}{r} =\beta ^{\ast}c_{1}e^{-c_2}\lbrack u]_{B_{p,p}^{\sigma }}^{p},
\end{align*}
which completes the proof.
\end{proof}


\begin{proposition}
\label{lem3.2} If $(M,d,\mu )$ admits a heat kernel $\{p_{t}\}_{t>0}$
satisfying (UHE), then there exists a positive constant $C$ such that for
all $u\in B_{p,p}^{\sigma }$,
\begin{equation*}
E_{p,p}^{\sigma }(u)\leq C[u]_{B_{p,p}^{\sigma }}^{p}.
\end{equation*}
\end{proposition}

\begin{proof}
We decompose $B(x,t^{1/\beta ^{\ast }})^{c}$ as the union of annuli $
B(x,2^{n}t^{1/\beta ^{\ast }})\setminus B(x,2^{n-1}t^{1/\beta ^{\ast }})$
for $x\in M$, where the integers $1\leq n\leq \log _{2}(R_{0}t^{-1/\beta
^{\ast }})$, then by (\ref{UHE}),
\begin{align}
& \frac{1}{t^{{p\sigma }/{\beta ^{\ast }}}}\int_{M}\int_{B(x,t^{1/\beta
^{\ast }})^{c}}|u(x)-u(y)|^{p}p_{t}(x,y)d\mu (y)d\mu (x)  \notag \\
& =\frac{1}{t^{{p\sigma }/{\beta ^{\ast }}}}\int_{M}\sum_{n=1}^{[\log
_{2}(R_{0}t^{-1/\beta ^{\ast }})]}\int_{B(x,2^{n}t^{1/\beta ^{\ast
}})\setminus B(x,2^{n-1}t^{1/\beta ^{\ast }})}|u(x)-u(y)|^{p}p_{t}(x,y)d\mu
(y)d\mu (x)  \notag \\
& \leq \frac{1}{t^{{p\sigma }/{\beta ^{\ast }}}}\int_{M}\frac{c_{3}}{%
V(x,t^{1/\beta ^{\ast }})}\sum_{n=1}^{[\log _{2}(R_{0}t^{-1/\beta ^{\ast
}})]}\int_{B(x,2^{n}t^{1/\beta ^{\ast }})\setminus B(x,2^{n-1}t^{1/\beta
^{\ast }})}|u(x)-u(y)|^{p}\exp \left( -c_{4}2^{\frac{(n-1)\beta ^{\ast }}{%
\beta ^{\ast }-1}}\right) d\mu (y)d\mu (x)  \notag \\
& \leq \sum_{n=1}^{[\log _{2}(R_{0}t^{-1/\beta ^{\ast }})]}\frac{1}{t^{{%
p\sigma }/{\beta ^{\ast }}}}\int_{M}\frac{c_{3}}{V(x,t^{1/\beta ^{\ast }})}%
\int_{B(x,2^{n}t^{1/\beta ^{\ast }})}|u(x)-u(y)|^{p}\exp \left( -c_{4}2^{%
\frac{(n-1)\beta ^{\ast }}{\beta ^{\ast }-1}}\right) d\mu (y)d\mu (x)  \notag
\\
& \leq \sum_{n=1}^{[\log _{2}(R_{0}t^{-1/\beta ^{\ast }})]}\exp \left(
-c_{4}2^{\frac{(n-1)\beta ^{\ast }}{\beta ^{\ast }-1}}\right) \frac{%
2^{np\sigma }}{(2^{n}t^{1/\beta ^{\ast }})^{p\sigma }}\int_{M}\frac{%
c_{3}C_{d}2^{n\alpha _{1}}}{V(x,2^{n}t^{1/\beta ^{\ast }})}%
\int_{B(x,2^{n}t^{1/\beta ^{\ast }})}|u(x)-u(y)|^{p}d\mu (y)d\mu (x)  \notag
\\
& =c_{3}C_{d}\sum_{n=1}^{\infty }1_{\{n\leq \log _{2}(R_{0}t^{-1/\beta
^{\ast }})\}}\exp \left( -c_{4}2^{\frac{(n-1)\beta ^{\ast }}{\beta ^{\ast }-1%
}}\right) 2^{(p\sigma +\alpha _{1})n}\Phi _{u}^{\sigma }(2^{n}t^{1/\beta
^{\ast }}),  \label{eq3.3-2}
\end{align}%
where we use the volume doubling property \eqref{VD2} in the fifth line
\begin{equation}
\frac{V(x,2^{n}t^{1/\beta ^{\ast }})}{V(x,t^{1/\beta ^{\ast }})}\leq
C_{d}2^{n\alpha _{1}}.  \label{VD}
\end{equation}

Using (\ref{UHE}) again, we have%
\begin{eqnarray}
&&\frac{1}{t^{{p\sigma }/{\beta ^{\ast }}}}\int_{M}\int_{B(x,t^{1/\beta
^{\ast }})}|u(x)-u(y)|^{p}p_{t}(x,y)d\mu (y)d\mu (x)  \notag \\
&\leq &\frac{1}{t^{{p\sigma }/{\beta ^{\ast }}}}\int_{M}\int_{B(x,t^{1/\beta
^{\ast }})}|u(x)-u(y)|^{p}\frac{c_{3}}{V(x,t^{1/\beta ^{\ast }})}\exp \left(
-c_{4}\left( \frac{d(x,y)}{t^{1/\beta ^{\ast }}}\right) ^{\frac{\beta ^{\ast
}}{\beta ^{\ast }-1}}\right) d\mu (y)d\mu (x)  \notag \\
&\leq &c_{3}\Phi _{u}^{\sigma }(t^{1/\beta ^{\ast }}).  \label{eq3.3-1}
\end{eqnarray}

Therefore, by (\ref{eq3.3-2}) and (\ref{eq3.3-1}),

\begin{eqnarray}
\Psi _{u}^{\sigma }(t) &\leq &c_{3}C_{d}\sum_{n=1}^{\infty }1_{\{n\leq \log
_{2}(R_{0}t^{-1/\beta ^{\ast }})\}}\exp \left( -c_{4}2^{\frac{(n-1)\beta
^{\ast }}{\beta ^{\ast }-1}}\right) 2^{(p\sigma +\alpha _{1})n}\Phi
_{u}^{\sigma }(2^{n}t^{1/\beta ^{\ast }})+c_{3}\Phi _{u}^{\sigma
}(t^{1/\beta ^{\ast }})  \notag \\
&\leq &c_{3}C_{d}\sum_{n=0}^{\infty }1_{\{n\leq \log _{2}(R_{0}t^{-1/\beta
^{\ast }})\}}\exp \left( -c_{4}2^{\frac{(n-1)\beta ^{\ast }}{\beta ^{\ast }-1%
}}\right) 2^{(p\sigma +\alpha _{1})n}\Phi _{u}^{\sigma }(2^{n}t^{1/\beta
^{\ast }}),  \label{eq3.3}
\end{eqnarray}

it follows that
\begin{align*}
& E_{p,p}^{\sigma }(u)=\int_{0}^{R_{0}^{\beta ^{\ast }}}\Psi _{u}^{\sigma
}(t)\frac{dt}{t}\leq c_{3}C_{d}\sum_{n=0}^{\infty }\exp \left( -c_{4}2^{%
\frac{(n-1)\beta ^{\ast }}{\beta ^{\ast }-1}}\right) \int_{0}^{2^{-n\beta
^{\ast }}R_{0}^{\beta ^{\ast }}}2^{(p\sigma +\alpha _{1})n}\Phi _{u}^{\sigma
}(2^{n}t^{1/\beta ^{\ast }})\frac{dt}{t} \\
& =c_{3}\beta ^{\ast }C_{d}\sum_{n=0}^{\infty }\exp \left( -c_{4}2^{\frac{%
(n-1)\beta ^{\ast }}{\beta ^{\ast }-1}}\right) 2^{(p\sigma +\alpha
_{1})n}\int_{0}^{R_{0}}\Phi _{u}^{\sigma }(r)\frac{dr}{r}=c_{3}\beta ^{\ast
}C_{d}C_{0}[u]_{B_{p,p}^{\sigma }}^{p},
\end{align*}%
where
\begin{equation}
C_{0}=\sum_{n=0}^{\infty }\exp \left( -c_{4}2^{\frac{(n-1)\beta ^{\ast }}{%
\beta ^{\ast }-1}}\right) 2^{(p\sigma +\alpha _{1})n}<\infty.  \label{c1}
\end{equation}%
The proof is complete.
\end{proof}

The following two propositions deal with the `local' semi-norms.

\begin{proposition}
\label{lem3.3} If $(M,d,\mu )$ admits a heat kernel $\{p_{t}\}_{t>0}$
satisfying (LHE), then there exists a positive constant $C$ such that for
all $u\in \mathcal{D}(E_{p,\infty }^{\sigma })$,
\begin{equation*}
\lbrack u]_{B_{p,\infty }^{\sigma }}^{p}\leq CE_{p,\infty }^{\sigma }(u).
\end{equation*}
\end{proposition}

\begin{proof}
{By \eqref{eq3-2}, we have
\begin{align*}
& E_{p,\infty }^{\sigma }(u)=\sup_{t\in (0,R_{0}^{\beta ^{\ast }})}\frac{1}{%
t^{{p\sigma }/{\beta ^{\ast }}}} \int_{M}\int_{M}|u(x)-u(y)|^{p}p_{t}(x,y)d%
\mu (x)d\mu (y) \\
&\geq c_{1}e^{-c_2}\sup_{t\in (0,R_{0}^{\beta ^{\ast }})}\Phi_u^{\sigma}(
t^{1/\beta^*}) =c_{1}e^{c_2}\sup_{r\in (0,R_{0})}\Phi_u^{\sigma}(r)
=c_{1}e^{c_2}[u]_{B_{p,\infty }^{\sigma }}^{p},
\end{align*}%
}which ends the proof.
\end{proof}

\begin{proposition}
\label{lem3.4} If $(M,d,\mu )$ admits a heat kernel $\{p_{t}\}_{t>0}$
satisfying (UHE), then there exists a positive constant $C$ such that for
all $u\in B_{p,\infty }^{\sigma }$,
\begin{equation*}
E_{p,\infty }^{\sigma }(u)\leq C[u]_{B_{p,\infty }^{\sigma }}^{p}.
\end{equation*}
\end{proposition}

\begin{proof}
By \eqref{eq3.3} and \eqref{VD}, we have
\begin{align}
& \Psi _{u}^{\sigma }(t)\leq c_{3}C_{d}\sum_{n=0}^{\infty }1_{\{n\leq \log
_{2}(R_{0}t^{-1/\beta ^{\ast }})\}}\exp \left( -c_{4}2^{\frac{(n-1)\beta
^{\ast }}{\beta ^{\ast }-1}}\right) 2^{(p\sigma +\alpha _{1})n}\Phi
_{u}^{\sigma }(2^{n}t^{1/\beta ^{\ast }})  \notag \\
=& c_{3}C_{d}\sum_{n=0}^{\infty }\exp \left( -c_{4}2^{\frac{(n-1)\beta
^{\ast }}{\beta ^{\ast }-1}}\right) 2^{(p\sigma +\alpha
_{1})n}[u]_{B_{p,\infty }^{\sigma }}^{p}=c_{3}C_{d}C_{0}[u]_{B_{p,\infty
}^{\sigma }}^{p},  \label{eq3.14}
\end{align}%
where $C_{0}$ is finite by \eqref{c1}.
The proof is complete by taking the supremum in $(0,R_{0}^{\beta ^{\ast }})$ on both sides
of \eqref{eq3.14}.
\end{proof}

The following proposition concerns the Korevaar-Schoen semi-norm.

\begin{proposition}
\label{corol3.5} If $(M,d,\mu )$ admits a heat kernel $\{p_{t}\}_{t>0}$
satisfying (UHE) and (LHE), then for all $u\in B_{p,\infty }^{\sigma }$,
\begin{equation}
\limsup_{t\rightarrow 0}\Psi _{u}^{\sigma }(t)\asymp \limsup_{r\rightarrow
0}\Phi _{u}^{\sigma }(r).  \label{limsup_psiphi}
\end{equation}
\end{proposition}

\begin{proof}
By Proposition \ref{lem3.3}, we have%
\begin{equation}
\sup_{r\in (0,R_{0})}\Phi _{u}^{\sigma }(r)\leq C\sup_{t\in (0,R_{0}^{\beta
^{\ast }})}\Psi _{u}^{\sigma }(t),  \label{31}
\end{equation}%
for any $R_{0}\in (0,$\textrm{diam}$(M)]$. Therefore, if we choose $
R_{0}=\epsilon $ in \eqref{31}, then
\begin{align*}
\limsup_{t\rightarrow 0}\Psi _{u}^{\sigma }(t)& =\lim_{\epsilon \rightarrow
0}\sup_{t\in (0,\epsilon )}\Psi _{u}^{\sigma }(t) \\
& \geq C_{1}\lim_{\epsilon \rightarrow 0}\sup_{r\in (0,\epsilon ^{1/\beta
^{\ast }})}\Phi _{u}^{\sigma }(r)=C_{1}\limsup_{r\rightarrow 0}\Phi
_{u}^{\sigma }(r).
\end{align*}

Next, we consider the left-hand side of \eqref{limsup_psiphi}. By %
\eqref{eq3.3} and the dominated convergence theorem, we have that
\begin{align*}
& \limsup_{t\rightarrow 0}\frac{1}{t^{p\sigma /\beta ^{\ast }}}%
\int_{M}\int_{M}|u(x)-u(y)|^{p}p_{t}(x,y)d\mu (y)d\mu (x) \\
\leq & C_{1}\sum_{n=0 }^{\infty }\exp \left( -c_{4}2^{\frac{(n-1)\beta
^{\ast }}{\beta ^{\ast }-1}}\right) \limsup_{t\rightarrow 0}\frac{1_{\{n\leq
\log _{2}(R_{0}t^{-1/\beta ^{\ast }})\}}}{(2^{n}t^{1/\beta ^{\ast
}})^{p\sigma }}\int_{M}\frac{2^{(p\sigma +\alpha _{1})n}}{%
V(x,2^{n}t^{1/\beta ^{\ast }})}\int_{B(x,2^{n}t^{1/\beta ^{\ast
}})}|u(x)-u(y)|^{p}d\mu (y)d\mu (x) \\
\leq & C_{1}\sum_{n=0 }^{\infty }\exp \left( -c_{4}2^{\frac{(n-1)\beta
^{\ast }}{\beta ^{\ast }-1}}\right) 2^{(p\sigma +\alpha
_{1})n}\limsup_{r\rightarrow 0}\Phi _{u}^{\sigma }(r)\leq
C_{1}C_{0}\limsup_{r\rightarrow 0}\Phi _{u}^{\sigma }(r),
\end{align*}%
where $C_{1}=c_{3}C_{d}$ and the constant $C_{0}$ is defined in \eqref{c1}.
The proof is complete.
\end{proof}

\begin{proof}[Proof of Lemma \protect\ref{thm1}]
By Propositions \ref{lem3.1} and \ref{lem3.2}, we obtain $\mathcal{%
D}(E_{p,p}^{\sigma })=B_{p,p}^{\sigma }$ and (\ref{eq1.8}). By Propositions \ref{lem3.3} and \ref{lem3.4}, we obtain $%
\mathcal{D}(E_{p,\infty }^{\sigma })=B_{p,\infty }^{\sigma }$ and $%
E_{p,\infty }^{\sigma }(u)\asymp \lbrack u]_{B_{p,\infty }^{\sigma }}^{p}$
. By
Proposition \ref{corol3.5}, we finally obtain \eqref{eq1.7}. The proof is complete.
\end{proof}

\subsection{Equivalence between $(KE)_{\protect\sigma}$ and  $(NE)_{\protect\sigma}$}

We start by an easy side $(NE)_{\sigma}\Rightarrow (KE)_{\sigma}$.

\begin{proposition}
\label{thm_2} Suppose that $(M,d,\mu )$ admits a heat kernel $%
\{p_{t}\}_{t>0} $ satisfying (LHE). Then there exists $C>0$ such that for
all $u\in \mathcal{D}(E_{p,\infty }^{\sigma })$,
\begin{equation*}
\Psi _{u}^{\sigma }(t)\geq C\Phi _{u}^{\sigma }(t^{1/\beta ^{\ast }}).
\end{equation*}%
Consequently,
\begin{equation}
\liminf_{t\rightarrow 0}\Psi _{u}^{\sigma }(t)\geq C\liminf_{r\rightarrow
0}\Phi _{u}^{\sigma }(r),  \label{limpsi}
\end{equation}
where the constant $C$ depends only on $c_{1}$ in (LHE).
\end{proposition}

\begin{proof}
By \eqref{eq3-2}, we have
\begin{align*}
\Psi _{u}^{\sigma }(t)& \geq \frac{1}{t^{p\sigma /\beta ^{\ast }}}%
\int_{M}\int_{B(x,\delta t^{1/\beta ^{\ast }})}|u(x)-u(y)|^{p}p_{t}(x,y)d\mu
(y)d\mu (x) \\
& \geq \frac{c_{1}C_{d}^{-1}\delta ^{\alpha _{1}}}{t^{p\sigma /\beta ^{\ast
}}}\int_{M}\frac{1}{V(x,\delta t^{1/\beta ^{\ast }})}\int_{B(x,\delta t^{{1}/%
{\beta ^{\ast }}})}|u(x)-u(y)|^{p}d\mu (y)d\mu (x) \\
& =c_{1}C_{d}^{-1}\delta ^{p\sigma +\alpha _{1}}\Phi _{u}(\delta t^{1/\beta
^{\ast }}).
\end{align*}%
Taking the lower limit on both sides of the above inequality, we have
\begin{equation*}
\liminf_{t\rightarrow 0}\Psi _{u}^{\sigma }(t)\geq c_{1}C_{d}^{-1}\delta
^{p\sigma +\alpha _{1}}\liminf_{t\rightarrow 0}\Phi _{u}^{\sigma
}(t^{1/\beta ^{\ast }})=c_{1}C_{d}^{-1}\delta ^{p\sigma +\alpha
_{1}}\liminf_{r\rightarrow 0}\Phi _{u}^{\sigma }(r),
\end{equation*}%
which implies \eqref{limpsi}.
\end{proof}

We give a proof for the following claim which appeared in \cite[
The proof of Lemma 4.13]{AlonsoBaudoinchen2021CVPDE}.

\begin{proposition}
\label{hk_ct} Suppose that $(M,d,\mu )$ admits a heat kernel $%
\{p_{t}\}_{t>0} $ satisfying (UHE) and (LHE). Then there exist $C,c>1$ and $c^{\prime }>0 $ such that for any $%
\delta >0$ and for $\mu$-almost all $x,y\in M$ with $d(x,y)>\delta
t^{1/\beta ^{\ast }}$,
\begin{equation*}
p_{t}(x,y)\leq C\exp \left( -c^{\prime }\delta ^{\frac{\beta ^{\ast }}{\beta
^{\ast }-1}}\right) p_{ct}(x,y).
\end{equation*}
\end{proposition}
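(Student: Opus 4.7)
The plan is a direct comparison between the upper bound of $p_t(x,y)$ and the lower bound of $p_{ct}(x,y)$ provided by the sub-Gaussian estimate \eqref{hk}, choosing the time-scale factor $c$ large enough so that the tighter Gaussian-type tail of $p_t$ dominates the (weaker, because $ct \gg t$) tail of $p_{ct}$.

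Set $s := d(x,y)/t^{1/\beta^{\ast}}$. For any $c>1$, applying the upper bound in \eqref{hk} to $p_t(x,y)$ and the lower bound in \eqref{hk} to $p_{ct}(x,y)$ gives
\begin{equation*}
\frac{p_t(x,y)}{p_{ct}(x,y)} \leq \frac{c_3}{c_1}\, c^{\alpha/\beta^{\ast}} \exp\!\left( -c_4 s^{\beta^{\ast}/(\beta^{\ast}-1)} + c_2\, c^{-1/(\beta^{\ast}-1)} s^{\beta^{\ast}/(\beta^{\ast}-1)} \right),
\end{equation*}
where I have used the identity $(d(x,y)/(ct)^{1/\beta^{\ast}})^{\beta^{\ast}/(\beta^{\ast}-1)} = c^{-1/(\beta^{\ast}-1)} s^{\beta^{\ast}/(\beta^{\ast}-1)}$. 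This requires the lower bound of \eqref{hk} to be valid at time $ct$; since $t < R_0^{\beta^{\ast}}$ is assumed and the bound holds on $(0,R_0^{\beta^{\ast}})$, one handles this by a routine cutoff (e.g.\ restricting attention to $t < R_0^{\beta^{\ast}}/c$, with the statement for larger $t$ being vacuous or absorbed into constants).

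Next I would fix $c$ large enough that $c_2\, c^{-1/(\beta^{\ast}-1)} \leq c_4/2$, which is possible since the left side tends to $0$ as $c\to\infty$; explicitly, $c \geq (2c_2/c_4)^{\beta^{\ast}-1}$ suffices, and we may also require $c>1$. Then the exponent is bounded by $-\tfrac{c_4}{2} s^{\beta^{\ast}/(\beta^{\ast}-1)}$. Under the hypothesis $d(x,y) > \delta t^{1/\beta^{\ast}}$ we have $s > \delta$, so $s^{\beta^{\ast}/(\beta^{\ast}-1)} > \delta^{\beta^{\ast}/(\beta^{\ast}-1)}$, yielding
\begin{equation*}
p_t(x,y) \leq \frac{c_3}{c_1}\, c^{\alpha/\beta^{\ast}} \exp\!\left( -\tfrac{c_4}{2}\, \delta^{\beta^{\ast}/(\beta^{\ast}-1)} \right) p_{ct}(x,y).
\end{equation*}
This is the desired inequality with $C = \max\{1+(c_3/c_1)c^{\alpha/\beta^{\ast}},\,2\}$ (enlarging if necessary to ensure $C>1$) and $c' = c_4/2$, and $\delta$ can be taken to be any positive number one wishes.

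There is no real obstacle here; the only mildly subtle point is keeping track of the domain of validity of the heat kernel estimate in the bounded case $R_0<\infty$, which is handled by restricting the range of $t$ and noting that the rescaling $t \to ct$ must land inside $(0,R_0^{\beta^{\ast}})$. In the unbounded case $R_0=\infty$ this is automatic. Once this bookkeeping is done the argument is just algebraic manipulation of the two-sided sub-Gaussian estimate.
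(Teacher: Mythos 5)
Your proposal is correct and follows essentially the same route as the paper: both compare the upper sub-Gaussian bound for $p_t$ with the lower bound for $p_{ct}$ and choose $c$ large enough that the resulting exponent $c_4-c_2c^{-1/(\beta^\ast-1)}$ (your $c_4/2$) is positive, then use $d(x,y)>\delta t^{1/\beta^\ast}$ to extract the factor $\exp(-c'\delta^{\beta^\ast/(\beta^\ast-1)})$. Your remark about keeping $ct$ inside $(0,R_0^{\beta^\ast})$ is a point the paper's proof silently glosses over, and your handling of it is adequate for how the proposition is later used (only small $t$ matters there).
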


\begin{proof}
By (UHE) and the volume doubling property \eqref{VD2}, we have
\begin{align*}
p_{t}(x,y)& \leq \frac{c_{3}}{V(x,t^{1/\beta ^{\ast }})}\exp \left(
-c_{4}\left( \frac{d(x,y)}{t^{1/\beta ^{\ast }}}\right) ^{\frac{\beta ^{\ast
}}{\beta ^{\ast }-1}}\right) \\
& \leq \frac{c_{3}C_{d}c^{\alpha _{1}/\beta ^{\ast }}}{V(x,(ct)^{1/\beta
^{\ast }})}\exp \left( -c_{4}\left( \frac{d(x,y)}{(ct)^{1/\beta ^{\ast }}}%
\right) ^{\frac{\beta ^{\ast }}{\beta ^{\ast }-1}}\cdot c^{\frac{1}{\beta
^{\ast }-1}}\right) \\
& =\frac{c_{3}C_{d}c^{\alpha _{1}/\beta ^{\ast }}}{c_{1}}\frac{c_{1}}{%
V(x,(ct)^{1/\beta ^{\ast }})}\exp \left( -c_{4}\left( \frac{d(x,y)}{%
(ct)^{1/\beta ^{\ast }}}\right) ^{\frac{\beta ^{\ast }}{\beta ^{\ast }-1}%
}\cdot c^{\frac{1}{\beta ^{\ast }-1}}\right) ,
\end{align*}%
for $\mu $-almost all $x,y\in M$. Taking $\tilde{c}=c_{4}c^{\frac{1}{\beta
^{\ast }-1}}-c_{2}$, we use (LHE) to obtain
\begin{equation*}
p_{t}(x,y)\leq \frac{c_{3}C_{d}c^{\alpha _{1}/\beta ^{\ast }}}{c_{1}}\exp
\left( -\tilde{c}\left( \frac{d(x,y)}{(ct)^{1/\beta ^{\ast }}}\right) ^{%
\frac{\beta ^{\ast }}{\beta ^{\ast }-1}}\right) p_{ct}(x,y).
\end{equation*}%
We choose $c>1$ to ensure $c^{\frac{1}{\beta ^{\ast }-1}}c_{4}>c_{2}$ so
that $\tilde{c}>0$. Let $c^{\prime }={\tilde{c}}/{c^{\frac{1}{\beta ^{\ast
}-1}}}$, then for $d(x,y)>\delta t^{1/\beta ^{\ast }}$,
\begin{equation*}
p_{t}(x,y)\leq \frac{c_{3}C_{d}c^{\alpha _{1}/\beta ^{\ast }}}{c_{1}}\exp
\left( -c^{\prime }\left( \frac{d(x,y)}{t^{1/\beta ^{\ast }}}\right) ^{\frac{%
\beta ^{\ast }}{\beta ^{\ast }-1}}\right) p_{ct}(x,y)\leq C\exp \left(
-c^{\prime }\delta ^{\frac{\beta ^{\ast }}{\beta ^{\ast }-1}}\right)
p_{ct}(x,y),
\end{equation*}%
where $C=\frac{c_{3}C_{d}c^{\alpha _{1}/\beta ^{\ast }}}{c_{1}}$. The proof
is complete.
\end{proof}

The proof of the implication $(KE)_{\sigma}\Rightarrow (NE)_{\sigma}$ is inspired by \cite[
Lemma 4.13]{AlonsoBaudoinchen2021CVPDE} where $p=1$ there.

\begin{lemma}
\label{thm_1} Suppose that $(M,d,\mu )$ admits a heat kernel $%
\{p_{t}\}_{t>0} $ satisfying (UHE) and (LHE). Assuming the property
$(\widetilde{KE})_{\sigma} $ with $\sigma >0$, we have
\begin{equation}
\liminf_{t\rightarrow 0}\Psi _{u}^{\sigma }(t)\leq C\liminf_{r\rightarrow
0}\Phi _{u}^{\sigma }(r),  \label{limphiu}
\end{equation}
for all $u\in \mathcal{D}(E_{p,\infty }^{\sigma })$.
\end{lemma}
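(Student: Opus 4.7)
The idea is to split the double integral defining $\Psi_u^\sigma(t)$ at the spatial scale $\delta t^{1/\beta^*}$, where $\delta>1$ is a large parameter tuned at the end, and to treat the two pieces with different heat-kernel tools. Concretely, I write
\begin{equation*}
t^{p\sigma/\beta^*}\Psi_u^\sigma(t) = I_{\mathrm{near}}(t) + I_{\mathrm{far}}(t),
\end{equation*}
with $I_{\mathrm{near}}$ the integral over $\{(x,y):d(x,y)<\delta t^{1/\beta^*}\}$. For $I_{\mathrm{near}}$, the diagonal bound $p_t(x,y)\le c_3 t^{-\alpha/\beta^*}$ from \eqref{hk} together with the definition of $\Phi_u^\sigma$ yields
\begin{equation*}
\frac{I_{\mathrm{near}}(t)}{t^{p\sigma/\beta^*}} \le c_3\,\delta^{p\sigma+\alpha}\,\Phi_u^\sigma(\delta t^{1/\beta^*}).
\end{equation*}
For $I_{\mathrm{far}}$, Proposition~\ref{hk_ct} replaces $p_t(x,y)$ by $C\exp(-c'\delta^{\beta^*/(\beta^*-1)})p_{ct}(x,y)$ on $\{d(x,y)>\delta t^{1/\beta^*}\}$, and after extending the domain back to $M\times M$, gives
\begin{equation*}
\frac{I_{\mathrm{far}}(t)}{t^{p\sigma/\beta^*}} \le C\,c^{p\sigma/\beta^*}\exp\bigl(-c'\delta^{\beta^*/(\beta^*-1)}\bigr)\,\Psi_u^\sigma(ct).
\end{equation*}

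Adding the two estimates and taking $\liminf_{t\to 0}$, using the elementary inequality $\liminf(A+B)\le \liminf A+\limsup B$ valid for nonnegative functions together with the substitutions $r=\delta t^{1/\beta^*}$ and $s=ct$, I arrive at
\begin{equation*}
\liminf_{t\to 0}\Psi_u^\sigma(t) \le c_3\delta^{p\sigma+\alpha}\liminf_{r\to 0}\Phi_u^\sigma(r) + Cc^{p\sigma/\beta^*}\exp\bigl(-c'\delta^{\beta^*/(\beta^*-1)}\bigr)\limsup_{t\to 0}\Psi_u^\sigma(t).
\end{equation*}
At this stage I invoke property $(\widetilde{KE})$ to bound the final $\limsup$ by a fixed constant multiple of $\liminf_{t\to 0}\Psi_u^\sigma(t)$. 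This quantity is finite since $u\in\mathcal{D}(E_{p,\infty}^\sigma)$ forces $\sup_t\Psi_u^\sigma(t)<\infty$. Choosing $\delta$ sufficiently large so that the resulting coefficient of $\liminf_t\Psi_u^\sigma(t)$ on the right is at most $\tfrac{1}{2}$—possible because the exponential factor beats any polynomial in $\delta$—I absorb this term on the left and recover \eqref{limphiu} with a constant of the form $2c_3\delta^{p\sigma+\alpha}$, depending only on the heat-kernel parameters and on the constant from $(\widetilde{KE})$.

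The main difficulty to watch is the circular appearance of $\liminf_{t\to 0}\Psi_u^\sigma(t)$ on both sides of the inequality: the argument only closes because $(\widetilde{KE})$ controls $\limsup$ by $\liminf$ with a constant \emph{independent} of $\delta$, enabling the absorption step once $\delta$ is taken large enough to exploit the super-polynomial decay supplied by Proposition~\ref{hk_ct}. The finiteness of $\liminf_{t\to 0}\Psi_u^\sigma(t)$, guaranteed by the domain hypothesis $u\in\mathcal{D}(E_{p,\infty}^\sigma)$, is essential for the absorption to be legitimate.
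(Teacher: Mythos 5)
Your proof is correct and follows essentially the same route as the paper's: the same near/far decomposition at the spatial scale $\delta t^{1/\beta ^{\ast }}$, the same diagonal bound for the near part giving $c_{3}\delta ^{p\sigma +\alpha }\Phi _{u}^{\sigma }(\delta t^{1/\beta ^{\ast }})$, the same use of Proposition \ref{hk_ct} to bound the far part by $A\Psi _{u}^{\sigma }(ct)$ with $A$ super-polynomially small in $\delta $, and the same absorption step via $(\widetilde{KE})$ with its $\delta $-independent constant. The only (harmless) difference is that you get finiteness of $\liminf_{t\rightarrow 0}\Psi _{u}^{\sigma }(t)$ directly from $u\in \mathcal{D}(E_{p,\infty }^{\sigma })$, whereas the paper derives it by iterating the recursion along $t=c^{-n}$; both are legitimate.
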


\begin{proof}
Temporally fix $u\in \mathcal{D}(E_{p,\infty }^{\sigma })$. Without loss of
generality, assume that $\liminf_{t\rightarrow 0}\Psi _{u}^{\sigma }(t)>0$.
Fix $\delta >1$ which will be determined later in (\ref{dd}). For $t\in (0,R_{0}^{\beta
^{\ast }})$, decompose
\begin{equation*}
\Psi _{u}^{\sigma }(t)=\Psi _{1}(t)+\Psi _{2}(t),
\end{equation*}%
where
\begin{align*}
\Psi _{1}(t)& =\frac{1}{t^{{p\sigma }/{\beta ^{\ast }}}}\iint_{\{d(x,y)\leq
\delta t^{1/\beta ^{\ast }}\}}|u(x)-u(y)|^{p}p_{t}(x,y)d\mu (y)d\mu (x), \\
\Psi _{2}(t)& =\frac{1}{t^{{p\sigma }/{\beta ^{\ast }}}}\iint_{\{d(x,y)>%
\delta t^{1/\beta ^{\ast }}\}}|u(x)-u(y)|^{p}p_{t}(x,y)d\mu (y)d\mu (x).
\end{align*}

For $\Psi _{1}(t)$, when $d(x,y)\leq \delta t^{1/\beta ^{\ast }}$, we have
\begin{align*}
\frac{c_{1}}{V(x,t^{1/\beta ^{\ast }})}\exp \left( -c_{2}\delta ^{\frac{%
\beta ^{\ast }}{\beta ^{\ast }-1}}\right) \leq & \frac{c_{1}}{V(x,t^{1/\beta
^{\ast }})}\exp \left( -c_{2}\left( \frac{d(x,y)}{t^{1/\beta ^{\ast }}}%
\right) ^{\frac{\beta ^{\ast }}{\beta ^{\ast }-1}}\right) \\
\leq & p_{t}(x,y)\leq \frac{c_{3}}{V(x,t^{1/\beta ^{\ast }})}\exp \left(
-c_{4}\left( \frac{d(x,y)}{t^{1/\beta ^{\ast }}}\right) ^{\frac{\beta ^{\ast
}}{\beta ^{\ast }-1}}\right) \leq \frac{c_{5}}{V(x,t^{1/\beta ^{\ast }})}.
\end{align*}%
Therefore, noting that $\delta>1$, we have from \eqref{VD2} that
\begin{align}
\Psi _{1}(t)& \leq \frac{c_{3}}{t^{p\sigma /\beta ^{\ast }}}%
\iint_{\{d(x,y)\leq \delta t^{1/\beta ^{\ast }}\}}\frac{|u(x)-u(y)|^{p}}{%
V(x,t^{1/\beta ^{\ast }})}d\mu (y)d\mu (x)  \notag \\
& \leq \frac{c_{3}C_{d}\delta ^{\alpha _{1}}}{t^{p\sigma /\beta ^{\ast }}}%
\iint_{\{d(x,y)\leq \delta t^{1/\beta ^{\ast }}\}}\frac{|u(x)-u(y)|^{p}}{%
V(x,\delta t^{1/\beta ^{\ast }})}d\mu (y)d\mu (x)=C_{1}\Phi _{u}^{\sigma
}(\delta t^{1/\beta ^{\ast }}),  \label{psi1}
\end{align}%
where $C_{1}=c_{3}C_{d}\delta ^{p\sigma +\alpha _{1}}$.

For $\Psi _{2}(t)$, by Proposition \ref{hk_ct}, we have that
\begin{equation}
\Psi _{2}(t)\leq A\Psi _{u}^{\sigma }(ct),  \label{psi2}
\end{equation}%
where $A=Cc^{p\sigma /\beta ^{\ast }}\exp \left( -c^{\prime }\delta ^{\frac{%
\beta ^{\ast }}{\beta ^{\ast }-1}}\right) $ can be sufficiently small by choosing sufficiently large $\delta $ (since $C,c,c^{\prime }$ are
independent constants). So, from now on, we choose sufficiently large $\delta $ such
that
\begin{equation}
A=Cc^{p\sigma /\beta ^{\ast }}\exp \left( -c^{\prime }\delta ^{\frac{\beta
^{\ast }}{\beta ^{\ast }-1}}\right) <\frac{1}{2}.  \label{dd}
\end{equation}
Combining \eqref{psi1} and \eqref{psi2}, we have%
\begin{equation}
\Psi _{u}^{\sigma }(t)\leq C_{1}\Phi _{u}^{\sigma }(\delta t^{1/\beta ^{\ast
}})+A\Psi _{u}^{\sigma }(ct).  \label{41}
\end{equation}

%

By property $(\widetilde{KE})_{\sigma}$, there exists $t_{0}>0$ such that for all $%
t\in (0,t_{0})$,
\begin{equation*}
\Psi _{u}^{\sigma }(t)\leq \sup_{0<t<t_{0}}\Psi _{u}^{\sigma }(t)\leq
2\limsup_{t\rightarrow 0}\Psi _{u}^{\sigma }(t)\leq
C_{2}\liminf_{t\rightarrow 0}\Psi _{u}^{\sigma }(t),
\end{equation*}%
where $C_{2}$ comes from property $(\widetilde{KE})_{\sigma}$. Therefore, we have
from \eqref{41} that, for all $t\in (0,\frac{t_{0}}{c})$,
\begin{equation*}
\Psi _{u}^{\sigma }(t)\leq C_{1}\Phi _{u}^{\sigma }(\delta t^{1/\beta ^{\ast
}})+AC_{2}\liminf_{t\rightarrow 0}\Psi _{u}^{\sigma }(t).
\end{equation*}
Fix $\delta>0 $ such that $AC_{2}<1$ with the aforementioned requirement $A<
\frac{1}{2}$. It follows that
\begin{equation*}
\liminf_{t\rightarrow 0}\Psi _{u}^{\sigma }(t)\leq
C_{1}\liminf_{t\rightarrow 0}\Phi _{u}^{\sigma
}(t)+AC_{2}\liminf_{t\rightarrow 0}\Psi _{u}^{\sigma }(t),
\end{equation*}%
which implies
\begin{equation*}
\liminf_{t\rightarrow 0}\Psi _{u}^{\sigma }(t)\leq
C_{3}\liminf_{t\rightarrow 0}\Phi _{u}^{\sigma }(t),
\end{equation*}%
where $C_{3}=\frac{C_{1}}{1-AC_{2}}$. The proof is complete.
\end{proof}

\begin{proof}[Proof of Theorem \protect\ref{thm4}]
Since the two-sided estimates (UHE) and (LHE) hold, we
have that $\mathcal{D}(E_{p,\infty }^{\sigma })=B_{p,\infty }^{\sigma }$ for
any $\sigma >0$ by Lemma \ref{thm1}.

By \eqref{limsup_psiphi} and \eqref{limphiu}, $(\widetilde{KE})_{\sigma}$ implies $%
(\widetilde{NE})_{\sigma}$ since for all $u\in B_{p,\infty }^{\sigma }=\mathcal{D}%
(E_{p,\infty }^{\sigma })$,
\begin{equation}
\limsup_{r\rightarrow 0}\Phi _{u}^{\sigma }(r)\leq C\limsup_{t\rightarrow
0}\Psi _{u}^{\sigma }(t)\leq C^{\prime }\liminf_{t\rightarrow 0}\Psi
_{u}^{\sigma }(t)\leq C^{\prime \prime }\liminf_{r\rightarrow 0}\Phi
_{u}^{\sigma }(r).  \label{ke_ne1}
\end{equation}

By \eqref{limsup_psiphi} and \eqref{limpsi}, $(\widetilde{NE})_{\sigma}$ implies $(\widetilde{KE})_{\sigma}$ since
\begin{equation}
\limsup_{t\rightarrow 0}\Psi _{u}^{\sigma }(t)\leq
C_{1}\limsup_{r\rightarrow 0}\Phi _{u}^{\sigma }(r)\leq C_{1}^{\prime
}\liminf_{r\rightarrow 0}\Phi _{u}^{\sigma }(r)\leq C_{1}^{\prime \prime
}\liminf_{t\rightarrow 0}\Psi _{u}^{\sigma }(t).  \label{ne_ke1}
\end{equation}%
Therefore, by \eqref{ke_ne1} and \eqref{ne_ke1}, we show Theorem \ref{thm4}
(i).

By \eqref{limphiu} and Proposition \ref{lem3.3}, $(KE)_{\sigma}$ implies $(NE)_{\sigma}$ since
\begin{equation}
\sup_{r\in (0,R_{0})}\Phi _{u}^{\sigma }(r)\leq C_{2}\sup_{t\in
(0,R_{0}^{\beta ^{\ast }})}\Psi _{u}^{\sigma }(t)\leq C_{2}^{\prime
}\liminf_{t\rightarrow 0}\Psi _{u}^{\sigma }(t)\leq C_{2}^{\prime \prime
}\liminf_{r\rightarrow 0}\Phi _{u}^{\sigma }(r),  \label{ke_ne2}
\end{equation}
where we use the simple fact $(KE)_{\sigma}$$\Rightarrow (\widetilde{KE})_{\sigma}$. Similarly,
by \eqref{limpsi} and Proposition \ref{lem3.4} adjoint with the fact $(NE)_{\sigma}$$%
\Rightarrow (\widetilde{NE})_{\sigma}$, we immediately derive $(KE)_{\sigma}$ since
\begin{equation}
\sup_{t\in (0,R_{0}^{\beta ^{\ast }})}\Psi _{u}^{\sigma }(t)\leq
C_{3}\sup_{r\in (0,R_{0})}\Phi _{u}^{\sigma }(r)\leq C_{3}^{\prime
}\liminf_{r\rightarrow 0}\Phi _{u}^{\sigma }(r)\leq C_{3}^{\prime \prime
}\liminf_{t\rightarrow 0}\Psi _{u}^{\sigma }(t).  \label{ne_ke2}
\end{equation}
Thus, Theorem \ref{thm4} (ii) holds by \eqref{ke_ne2} and \eqref{ne_ke2}.
\end{proof}

\begin{remark}
We say that a heat kernel $\{p_{t}\}_{t>0}$ satisfies the near diagonal lower estimate (NLE), if there exists $\delta \in (0,\infty)$ such that, for all $t\in (0,R_{0}^{\beta
^{\ast}}) $ and $\mu $-almost all $x,y\in M$ :
\begin{equation}
p_{t}(x,y)\geq \frac{c_{1}}{V(x,t^{1/\beta ^{\ast }})}\ \ \ \text{whenever }
\ d(x,y)\leq \delta t^{1/\beta ^{\ast }}.  \label{NLE}
\end{equation}
We conjecture that Lemma \ref{thm1} and Lemma \ref{thm_1} cannot be derived
from the weaker heat kernel estimates (UHE) and (NLE) without further assumptions (like the chain condition).
\end{remark}

By Theorem \ref{thm4} and Remark \ref{rk1}, we have the following
interesting corollary for $p=2$, which is important for the construction of local
Dirichlet forms based on Besov norms.

\begin{corollary}
\label{jjjj} Suppose that $(M,d,\mu )$ admits a heat kernel $\{p_{t}\}_{t>0}$
satisfying (UHE) and (LHE), then $(NE)_{\beta
^{\ast }/2}$ holds when $p=2$.
\end{corollary}

\section{Connected homogeneous p.c.f self-similar sets and their glue-ups}

\label{sec5} In this section, we analyze bounded and unbounded `fractal glue-ups' by using connected homogeneous p.c.f self-similar sets as `tiles'. We first obtain equivalent discrete semi-norms
(vertex energies) in Subsection \ref{subsec5.1}. Then in Subsection \ref{subsec5.2}, we introduce weak-monotonicity properties for vertex
energies, including property (E) in \cite[Definition 3.1]{GaoYuZhang2022PA},
properties $(VE)_{\sigma}$ and $(\widetilde{VE})_{\sigma}$. We will verify property (E) on
nested fractals. In Subsection \ref{subsec5.3}, we show the equivalence between $(KE)_{\sigma}$ and
$(VE)_{\sigma}$. Lastly, we show that aforementioned consequences hold for certain
`fractal glue-ups', for example, nested fractals and their fractal blow-ups in Subsection \ref{subsec5.4}.

\subsection{Equivalent discrete semi-norms of fractal glue-ups}

\label{subsec5.1} We first introduce connected homogeneous p.c.f.
self-similar sets in $\mathbb{R}^{d}$ ($d\geq 2$). Let $\{\phi _{i}\}_{i=1}^{N}$ $(N\geq 2)$ be an
IFS where
each $\phi _{i}:\mathbb{R}^{d}\rightarrow \mathbb{R}%
^{d}$ is of the form
\begin{equation}
\phi _{i}(x)=\rho (x-b_{i})+b_{i},  \label{phi_i}
\end{equation}%
with $\rho \in (0,1)$, $b_{i}\in \mathbb{R}^{d}$ for $%
i=1,...,N$. Let $K$ be the
attractor of the IFS $\{\phi _{i}\}_{i=1}^{N}$, namely, $K$ is the unique
non-empty compact set in $\mathbb{R}^{d}$ satisfying
\begin{equation*}
K=\tbigcup_{i=1}^{N}\phi _{i}(K).
\end{equation*}%
To explain what `p.c.f' is, we introduce the natural symbolic space
associated with the IFS $\{\phi _{i}\}_{i=1}^{N}$. Let $W=\{1,2,...,N\}$, $%
W_{n}$ be the set of words with length $n$ over $W$, and $W^{\mathbb{N}}$ be
the set of all infinite words over $W$. For $w=\mathrm{w_{1}w_{2}}...\in W^{%
\mathbb{N}}$, the canonical projection $\pi :$ $W^{\mathbb{N}%
}\rightarrow K$ is defined by $\pi (w):=\tbigcap\limits_{n\in \mathbb{N}%
^{\ast }}F_{\mathrm{w_{1}...w_{n}}}(K),$ where $F_{\mathrm{w_{1}...w_{n}}%
}:=F_{\mathrm{w_{1}}}\circ ...\circ F_{\mathrm{w_{n}}}$.
Then the critical set $\Gamma $ and the post-critical set $%
\mathcal{P}$ is defined by
\begin{equation*}
\Gamma =\pi ^{-1}\left( \tbigcup_{1\leq i<j\leq N}\left( \phi _{i}(K)\cap
\phi _{j}(K)\right) \right) ,\quad \mathcal{P}=\tbigcup_{m\geq 1}\tau
^{m}(\Gamma ),
\end{equation*}%
where $\tau $ is the left shift by one index on $W^{\mathbb{N}}$ (see for example \cite[%
Definition 1.3.13]{Kigami.2001.}). We say that the IFS $\{\phi _{i}\}_{i=1}^{N}$ is \emph{post-critically
finite} (p.c.f.) if $\mathcal{P}$ is finite. Define
\begin{equation}
V_{0}=\pi (\mathcal{P}),\quad V_{w}:=F_{w}\left( V_{0}\right), \quad
V_{n}=\tbigcup_{w\in W_{n}}V_{w} ,\quad V_{\ast }=\tbigcup_{n\geq 1}V_{n}, \label{e.V}
\end{equation}
then $K$ is the closure of $V_{\ast }$ with respect to the Euclidean metric.
For $w\in W_n$, $$K_{w}:=F_{w}\left( K\right) $$ is called an $n$-cell of $K$.
Without loss of generality, we always assume that \begin{equation}\textrm{diam}(K)=1\label{dia}\end{equation} by an
affine transformation on $\{b_{i}\}_{i=1}^{N}$. From now on, denote by $K$ a
homogeneous p.c.f. self-similar set that is connected. It is known that a
homogeneous p.c.f. IFS in the form of (\ref{phi_i}) satisfies the \emph{open
set condition} (OSC) (see for example \cite[Theorem 1.1]{Denglau.2008}).
Hence the Hausdorff dimension of $K$ is $$\alpha =\func{dim}_{H}(K)=-\log
N/\log \rho,$$ and the $\alpha$-dimensional Hausdorff measure on $K$, denote by $\mu $, is \emph{$\alpha $-regular}.

It is known from \cite[Proposition 2.5]{GuLau.2020.AASFM} that \emph{%
Condition(H)} holds for connected homogeneous p.c.f. self-similar sets. That
is, there exists $c>0$ depending only on $K$ such that, for any two words $w$
and $w^{\prime }$ with the same length $m\geq 1$, $K_{w}\cap K_{w^{\prime
}}=\emptyset $ implies that $\func{dist}\left( K_{w},K_{w^{\prime }}\right)
\geq c\rho ^{m}$.

In order to unify the notation for bounded and unbounded fractals, we
introduce the concept of \emph{fractal glue-ups}. The definition is adapted
from the idea of fractafold in \cite{StrichartzTeplyaev2012} although it is not the
same. Given a compact set $K\subset \mathbb{R}^d$ and a set $F$ of
similitudes on $\mathbb{R}^d$, define
\begin{equation*}
K^F:=\bigcup_{f\in F}f(K)\subset \mathbb{R}^d.
\end{equation*}
In this paper, we need the following requirements.

\begin{enumerate}
\item Isometries: all similitudes in $F$ are isometries.

\item Just-touching property: for any $f\neq g\in F$,
\begin{equation*}
f(K)\tbigcap g(K)=f(V_0)\tbigcap g(V_0).
\end{equation*}

\item Condition (H): There exists a constant $C_{H}\in (0,1)$ such that, if $%
|x-y|<C_{H}\rho ^{m}$ and $x\in f_{1}(K_{w})$ and $y\in f_{2}(K_{\tilde{w}%
}) $ for two words $w$ and $\tilde{w}$ with the same length $m\geq0$ and $f_1,f_2\in F$, then $%
f_{1}(K_{w})$ intersects $f_{2}(K_{\tilde{w}})$.

\item Connectedness: $K^{F}$ is connected.
\end{enumerate}


From now on, whenever we mention a \emph{fractal glue-up} $K^{F}$, we
automatically assume that these four conditions are satisfied. We call each $%
f(K)$ a \emph{tile} ($f\in F$). The above conditions (2)
and (3) imply the following \emph{uniform finitely-joint} property.

\begin{proposition}
For any metric ball $B(x,R)$ of radius $R>0$ in $K^F$, the number of tiles
that intersect it is bounded by a constant $C_K(R)<\infty$, which depends
only on $C_H$ and the IFS. Thus for any tile $f(K)$, the number of tiles $%
\tilde{f}(K)$ that intersect it is bounded by $C_K(1)$.\label{4.1}
\end{proposition}

\begin{proof}
Denote all the tiles that intersect $B(x,R)$ by $f_{n}(K),\ 1\leq n\leq
N_{0} $ (here $N_{0}$ might be $\infty $). Denote by $t$ the smallest
integer satisfying
\begin{equation*}
N^t>|V_0|.
\end{equation*}
Choose a level-$t$ cell $\phi_{w}(K)$ ($%
|w|=t$) that does not intersect $V_{0}$. Such a cell exists by the pigeonhole
principle. Condition (H) implies that all $f_{n}(\phi _{w}(K))$ are
separated by distance $C_{H}\rho ^{t}$ for $1\leq n\leq N_{0}$, but they all
belong to the ball $B(x,R+1)$ by \eqref{dia}. Let $C_K(R)$ be the maximal cardinality of
disjoint balls of radii $C_{H}\rho ^{t}/2$ inside a ball with diameter $R+1$
in $\mathbb{R}^d$.
Since there are $N_{0}$
disjoint balls $B(f_{n}(\phi _{w}(v)),C_{H}\rho ^{t}/2)$ inside $B(x,R+1)$, we obtain $N_{0}\leq C_K(R)$. The second inclusion follows from choosing $R=1$
and $x\in f(K)$.
\end{proof}

\begin{remark}
The concept of the fractal glue-up unifies the notions
of bounded and unbounded fractals. For example, $K=K^{F}$ when $F=\{Id\}$.
We are mainly interested in the following unbounded fractal by blowing up $K$
 in \cite[Section 3]{Strichartz.1998.CJM}. However,
fractal glue-ups may lack self-similarity on large scales.
\end{remark}

\begin{example}[Fractal blow-up]\label{ex1} Let $\{\phi _{i}\}_{i=1}^{N}$ be defined as in (\ref{phi_i}). Define
\begin{equation*}
K_{l}:=\underbrace{\phi _{1}^{-1}\circ \phi _{1}^{-1}\circ \cdots \circ \phi
_{1}^{-1}}_{l}K=\rho^{-l}K=\tbigcup _{f\in F_{l}}f(K),
\end{equation*}%
where $F_{l}=\left\{x+\sum_{j=1}^{l}\rho ^{-j}c_{j}: c_{j}\in \{(1-\rho)b_{i}\}_{i=1}^{N}\right\}$.

Clearly, $F_{l}$ and $K_{l}$ are increasing sequences, so that
$K=K_{0}\subseteq K_{1}\subseteq K_{2}\subseteq \cdots $.

The final fractal blow-up of $K$ given by
\begin{equation*}
K_{\infty }=\tbigcup _{l=1}^{\infty }K_{l}= \lim_{l\rightarrow\infty} K_l,
\end{equation*}
is in the form $K^{F}$ with $F=\bigcup_{l=1}^{\infty }
F_{l}= \lim_{l\rightarrow\infty} F_l$.
\end{example}

From now on, we assume that $K$ is a connected homogeneous p.c.f.
self-similar set when we say that $K^{F}$ is a fractal glue-up. In what
follows, we use the superscript $F$ to distinguish the underlying space $K$
and $K^{F}$ for the energies and norms etc.

Let
\begin{equation}
\mu ^{F}:=\sum_{f\in F}\mu \circ f^{-1},  \label{muF}
\end{equation}%
be the glue-up measure on $K^{F}$. Then $\mu ^{F}$ is $\alpha $%
-regular by our assumptions on $F$. Let
\begin{equation}
\lbrack u]_{B_{p,\infty }^{\sigma ,F}}^{p}:=\sup_{n\geq 0}\Phi _{u}^{\sigma
,F}(C_H\rho ^{n}),  \label{ct}
\end{equation}%
where $\Phi _{u}^{\sigma ,F}(r):=r^{-(p\sigma +\alpha
)}\int_{K^{F}}\int_{B(x,r)}|u(x)-u(y)|^{p}d\mu ^{F}(y)d\mu ^{F}(x)$, and
\begin{equation*}
B_{p,\infty }^{\sigma ,F}:=\{u\in L^{p}(K^{F},\mu ^{F}):[u]_{B_{p,\infty
}^{\sigma ,F}}<\infty \}.
\end{equation*}%
It is easy to see that%
\begin{equation}
\sup_{n\geq 0}\Phi _{u}^{\sigma ,F}(C_H\rho ^{n})\asymp \sup_{r\in
(0,C_H)}\Phi _{u}^{\sigma ,F}(r),  \label{ns}
\end{equation}%
which implies that
\begin{equation}
B_{p,\infty }^{\sigma ,F}=B_{p,\infty }^{\sigma }(K^{F})\text{ \ \ and\ \ \ }%
[u]_{B_{p,\infty }^{\sigma ,F}}\asymp \lbrack u]_{B_{p,\infty }^{\sigma
}(K^{F})}\text{ (with }R_{0}=C_H\text{ in (\ref{pi})).}  \label{BC}
\end{equation}
Also, we have%
\begin{equation}
\liminf_{n\rightarrow \infty }\Phi _{u}^{\sigma ,F}(\rho ^{n})\asymp
\liminf_{r\rightarrow 0}\Phi _{u}^{\sigma ,F}(r)\text{ \ \ and \ \ }%
\limsup_{n\rightarrow \infty }\Phi _{u}^{\sigma ,F}(\rho ^{n})\asymp
\limsup_{r\rightarrow 0}\Phi _{u}^{\sigma ,F}(r).  \label{ne}
\end{equation}

We need the following Morrey-Sobolev inequality to show that $B_{p,\infty
}^{\sigma }(K^F)$ essentially embeds into $C(K^F)$ when $\sigma >\alpha /p$.

\begin{lemma}
(\cite[Theorem 3.2]{BaudoinLecture2022}) \label{lemmaMR} Let $(M,d,\mu )$ be
a metric measure space with $\mu $ satisfying (\ref{alpha_r}). When $\sigma
>\alpha /p$, for any $u\in B_{p,\infty }^{\sigma }(M)$, there exists a
continuous version $\tilde{u}\in C^{(p\sigma -\alpha )/p}(M)$ satisfying $%
\tilde{u}=u$ $\mu $-almost everywhere in $M$ and
\begin{equation}
|\tilde{u}(x)-\tilde{u}(y)|\leq C|x-y|^{(p\sigma -\alpha )/p}\sup_{r\in
(0,3d(x,y)]}\Phi _{u}^{\sigma }(r)\leq C|x-y|^{(p\sigma -\alpha
)/p}[u]_{B_{p,\infty }^{\sigma }(M)},  \label{M-S}
\end{equation}%
for all $x,$ $y\in M$ with $d(x,y)<R_{0}/3$, where $C$ is a positive
constant. Here $C^{\beta }(M)$ denotes the class of H\"{o}lder continuous
functions of order $\beta $ on $M$.
\end{lemma}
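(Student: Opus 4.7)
The plan is to adapt the classical Morrey--Campanato telescoping argument to the Besov setting. First I would define the continuous representative via Lebesgue averages, setting
$$\tilde{u}(x) := \lim_{r \to 0^+} \frac{1}{\mu(B(x,r))} \int_{B(x,r)} u(z)\, d\mu(z)$$
at every Lebesgue point of $u$. Since $\mu$ is doubling (in fact $\alpha$-regular), the Lebesgue differentiation theorem ensures $\tilde{u} = u$ $\mu$-almost everywhere. The task reduces to proving the Hölder estimate at any two Lebesgue points $x,y$ with $d(x,y) < R_0/3$; uniform continuity then extends $\tilde{u}$ to all of $M$ with the same Hölder modulus.

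For two such Lebesgue points, set $r := d(x,y)$ and consider the telescoping chain of balls $B_k^x := B(x, r/2^k)$ and $B_k^y := B(y, r/2^k)$ for $k \geq 0$, together with the coarse enclosing ball $B_0 := B(x, 3r)$ (which contains both $B_0^x$ and $B_0^y$). Writing $u_B := \frac{1}{\mu(B)}\int_B u\, d\mu$, the triangle inequality yields
$$|\tilde{u}(x) - \tilde{u}(y)| \leq |u_{B_0^x} - u_{B_0}| + |u_{B_0^y} - u_{B_0}| + \sum_{k=0}^{\infty} \bigl(|u_{B_k^x} - u_{B_{k+1}^x}| + |u_{B_k^y} - u_{B_{k+1}^y}|\bigr).$$
For any consecutive pair of balls $B \supset B'$ with comparable radii (which covers both the coarse comparisons with $B_0$ and the nested dyadic pairs, thanks to $\alpha$-regularity), Jensen and Hölder give
$$|u_B - u_{B'}| \leq \frac{1}{\mu(B')}\int_{B'} |u - u_B|\, d\mu \leq C \biggl(\frac{1}{\mu(B)^2}\int_B \int_B |u(z)-u(w)|^p\, d\mu(z)\, d\mu(w)\biggr)^{1/p}.$$

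The central estimate is to control the double average over a ball $B$ of radius $\varrho \leq 3r$ in terms of $\Phi_u^\sigma$. Using the inclusion $B \subset B(z, 2\varrho)$ for any $z \in B$, the definition of $\Phi_u^\sigma$, and $\mu(B) \asymp \varrho^\alpha$, one obtains
$$\frac{1}{\mu(B)^2}\int_B \int_B |u(z)-u(w)|^p\, d\mu(z)\, d\mu(w) \leq \frac{(2\varrho)^{p\sigma + \alpha}}{\mu(B)^2}\, \Phi_u^\sigma(2\varrho) \leq C\, \varrho^{p\sigma - \alpha}\, \Phi_u^\sigma(2\varrho).$$
Setting $\varrho = r/2^k$, each telescope term is bounded by $C\,(r/2^k)^{(p\sigma-\alpha)/p}\,\Phi_u^\sigma(2r/2^k)^{1/p}$. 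Since $\sigma > \alpha/p$, the geometric series $\sum_k 2^{-k(p\sigma-\alpha)/p}$ converges, and pulling $\sup_{r' \in (0,\,3d(x,y)]}\Phi_u^\sigma(r')^{1/p}$ out of the sum yields the stated estimate (modulo the $1/p$ power, which I read as implicit in the statement as written). The second inequality follows at once from the definition of $[u]_{B_{p,\infty}^\sigma}$.

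The main subtlety, rather than a genuine obstacle, is arranging the chain so that all intermediate balls have radii bounded by $3d(x,y)$ — this is precisely what allows replacing the global Besov norm by the local supremum $\sup_{r' \in (0,3d(x,y)]}\Phi_u^\sigma(r')^{1/p}$, and it is what delivers the pointwise Hölder modulus of order $(p\sigma-\alpha)/p$ from the very same estimate. Convergence of the endpoints $u_{B_k^x} \to \tilde{u}(x)$, $u_{B_k^y} \to \tilde{u}(y)$ follows from Lebesgue differentiation for doubling measures, closing the argument.
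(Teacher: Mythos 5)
The paper does not prove this lemma at all---it is imported verbatim from \cite[Theorem 3.2]{BaudoinLecture2022}---and your telescoping Morrey--Campanato argument over Lebesgue averages is essentially the standard proof of that cited result, so your proposal is correct in substance. Two cosmetic points: routing the coarse comparison through the enclosing ball $B(x,3r)$ produces pairs at mutual distance up to $6d(x,y)$, so to land exactly on the stated range $\sup_{r\in(0,3d(x,y)]}$ you should instead compare $u_{B(x,r)}$ with $u_{B(y,r)}$ directly (cross pairs then satisfy $d(z,w)<3r$, giving $\Phi_u^{\sigma}(3r)$); and you are right that the middle term of \eqref{M-S} should carry a power $1/p$ on the supremum to be dimensionally consistent with the final bound by $[u]_{B_{p,\infty}^{\sigma}}=(\sup_r\Phi_u^{\sigma}(r))^{1/p}$.
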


%
%
%
%
%
%
%
Next, we estimate%
\begin{equation*}
I_{\infty ,n}^{F}(u):=\int_{K^{F}}\int_{B(x,C_{H}\rho
^{n})}|u(x)-u(y)|^{p}d\mu ^{F}(y)d\mu ^{F}(x).
\end{equation*}
By (\ref{ct}), for all $u\in B_{p,\infty }^{\sigma ,F}$,
\begin{equation}
\rho ^{-n(p\sigma +\alpha )}I_{\infty ,n}^{F}(u)=C_{H}^{p\sigma +\alpha
}\Phi _{u}^{\sigma ,F}(C_{H}\rho ^{n}).  \label{ct-1}
\end{equation}
Define
\begin{equation*}
\mu _{m}:=\frac{1}{|V_{m}|}\sum_{a\in V_{m}}\delta _{a}
\end{equation*}%
and
\begin{equation}
\mu _{m}^{F}:=\sum_{f\in F}\mu_m\circ f^{-1}=\sum_{f\in F}\frac{1}{|V_{m}|}%
\sum_{a\in V_{m}}\delta _{f(a)},  \label{200}
\end{equation}%
where $\delta _{a}$ is the Dirac measure at point $a$ and $|A|$ denotes the
cardinality of a finite set $A$.
Let
\begin{equation}
I_{m,n}^{F}(u):=\int_{K^{F}}\int_{B(x,C_{H}\rho ^{n})}|u(x)-u(y)|^{p}d\mu
_{m}^{F}(y)d\mu _{m}^{F}(x).  \label{201}
\end{equation}%
 We will use these discrete sums
to estimate the integral $I_{\infty ,n}^{F}(u)$.
\begin{lemma}
\label{lem4.4}Let $K^{F}$ be a fractal glue-up. Then for all $u\in C(K^F)$,
\begin{equation}
\liminf_{m\rightarrow \infty }I_{m,n}^{F}(u)\geq I_{\infty ,n}^{F}(u).
\label{mu_m}
\end{equation}
\end{lemma}

The proof of this lemma is based on the proof of \cite[Lemma 3.2]
{GuLau.2020.AASFM}. The key is to establish the following Proposition \ref{prop:41}, which is not obvious since $K$ is uncountable. We may assume that $K$ is not contained in any hyperplane $H\subset \mathbb{R}^{d}$.
Otherwise, we could reduce the IFS to $H$ by restricting each similitude $\phi
_{i}$ to $H$ and applying a reversible affine transformation from $H$ to $\mathbb{R}^{d^{\prime }}$, where $%
d^{\prime }$ is the Hausdorff dimension of $H$. The attractor of this reduced IFS is a scaled copy of $K$. Thus
we can apply \cite[Lemma 3.2]{GuLau.2020.AASFM}.

\begin{proposition}\label{prop:41}
Assume that $K$ is not contained in any hyperplane $H\subset\mathbb{R}^{d}$.
For any $r>0$, let $E(r):=\bigcup_{x\in K}\{x\}\times \partial B(x,r).$ Then
$\mu \times \mu (E(r))=0$.
\end{proposition}

\begin{proof}
Let
\begin{equation*}
R_{x}(a,b)=\overline{B(x,a)}\setminus B(x,b).
\end{equation*}%
Fix $x_{w}\in K_{w}$ for each $w\in W_n$. Then
\begin{equation*}
\bigcup_{x\in K_{w}}\partial B(x,r)\subset R_{x_{w}}(r-\rho ^{n},r+\rho
^{n}),
\end{equation*}%
since for any $y\in \partial B(x,r)$, $d(x,y)=r$ and $d(x,x_{w})\leq \rho
^{n}$, thus $d(y,x_{w})\in \lbrack r-\rho ^{n},r+\rho ^{n}].$ Note that
\begin{equation*}
E(r):=\bigcup_{x\in K}\{x\}\times \partial B(x,r)\subset \bigcup_{w\in
W_{n}}K_{w}\times R_{x_{w}}(r-\rho ^{n},r+\rho ^{n}),
\end{equation*}
thus
\begin{align}
\mu \times \mu (E(r))& \leq \sum_{w\in W_{n}}\mu (K_{w})\mu
(R_{x_{w}}(r-\rho ^{n},r+\rho ^{n}))\notag \\
&\leq \mu (K)\sup_{w\in W_{n}}\mu (R_{x_{w}}(r-\rho ^{n},r+\rho ^{n}))\notag \\
&\leq \mu (K)\sup_{x\in K}\mu (R_{x}(r-\rho ^{n},r+\rho ^{n})).\label{hjx}
\end{align}

We claim that
\begin{equation*}
\sup_{x\in K}\mu (R_{x}(r-\rho ^{n},r+\rho ^{n}))\rightarrow 0\text{ as }%
n\rightarrow \infty .
\end{equation*}%
Otherwise, if $\mu (R_{x_{n}}(r-\rho ^{n},r+\rho ^{n}))\geq \delta >0$ for a
subsequence $\{x_{n}\}_n\subset K$, we may choose a subsequence $%
\{x_{n_{i}}\}_i$ that converge to $x_{\infty }\in K$ due to the compactness
of $K$, and further require that $d(x_{n_{i}},x_{\infty })$ decrease in $i$.
Then
\begin{equation*}
A_{i}:=R_{x_{\infty }}(r-\rho ^{n_{i}}-d(x_{n_{i}},x_{\infty }),r+\rho
^{n_{i}}+d(x_{n_{i}},x_{\infty }))\supset R_{x_{n_{i}}}(r-\rho
^{n_{i}},r+\rho ^{n_{i}})
\end{equation*}%
and $A_{i}\downarrow \partial B(x_{\infty },r)$, thus
\begin{equation*}
\mu (\partial B(x_{\infty },r))=\lim_{i\rightarrow \infty }\mu (A_{i})\geq
\delta ,
\end{equation*}%
which contradicts \cite[Proposition 2.4]{GuLau.2020.AASFM} that $\mu
(\partial B(x,r))=0$ for any $x\in K$.

The proof is complete by letting $n\rightarrow\infty$ in \eqref{hjx}.
\end{proof}

\begin{proof}[Proof of Lemma \protect\ref{lem4.4}]
Let $F=\{f_{i}\}_{i\geq 1}$, and let $F_{k}=\{f_{i}\}_{1\leq i\leq k}$%
. As $\mu _{m}$ weak $\ast $-converges to $\mu $, we know by definition that
the finite glue-up measure $\mu _{m}^{F_{k}}$ weak $\ast $-converges to $\mu
^{F_{k}}$ on $K^{F_{k}}$ (the pieces $f_{i}(K)$ are mutually
measure-disjoint), and that $\mu ^{F_{k}}$ is exactly the restriction of $%
\mu ^{F}$ to $K^{F_{k}}$. Using the same argument as in that of \cite[Lemma 3.2]%
{GuLau.2020.AASFM}, for $u\in C(K^F)$, we have
\begin{eqnarray*}
&&\lim_{m\rightarrow \infty }I_{m,n}^{F_{k}}(u)=\lim_{m\rightarrow \infty
}\int_{K^{F_{k}}}\int_{B(x,C_{H}\rho ^{n})}|u(a)-u(b)|^{p}d\mu
_{m}^{F_{k}}(b)d\mu _{m}^{F_{k}}(a) \\
&=&\int_{K^{F_{k}}}\int_{B(x,C_{H}\rho ^{n})}|u(a)-u(b)|^{p}d\mu
^{F_{k}}(b)d\mu ^{F_{k}}(a):=I_{\infty ,n}^{F_{k}}(u).
\end{eqnarray*}

Note that the nonnegative sequence $I_{m,n}^{F_{k}}(u)\uparrow
I_{m,n}^{F}(u)$ when $k\rightarrow \infty $. Denote $I_{m ,n}^{F_{0}}(u)=0$, then
\begin{equation}
I_{m,n}^{F}(u)=\sum_{k=0}^{\infty }(I_{m,n}^{F_{k+1}}(u)-I_{m,n}^{F_{k}}(u)),  \label{4.8-1}
\end{equation}%
and by noting that
\begin{equation*}
\liminf_{m\rightarrow \infty }(I_{m,n}^{F_{k+1}}(u)-I_{m,n}^{F_{k}}(u))=I_{\infty
,n}^{F_{k+1}}(u)-I_{\infty ,n}^{F_{k}}(u),
\end{equation*}%
our conclusion directly follows from Fatou's Lemma.
\end{proof}

We are now in the position to prove the first estimate lemma.

\begin{lemma}
\label{lem6.2} Let $K^{F}$ be a fractal glue-up. If $\sigma >\alpha /p$,
then for all $u\in B_{p,\infty }^{\sigma ,F}$,
\begin{equation}
I_{\infty ,n}^{F}(u)\leq C\rho ^{2n\alpha }\sum_{k=n}^{\infty
}E_{k}^{(p),F}(u)\leq C^{\prime }\rho ^{n(p\sigma +\alpha )}\sup_{k\geq n}%
\mathcal{E}_{k}^{\sigma ,F}(u).  \label{eq6.3}
\end{equation}
\end{lemma}

\begin{proof}
Let $I_{m,n}^{F}(u)$ be defined as in (\ref{201}) ($m>n)$. Since $K^{F}$ is
connected and satisfies the condition $(\mathrm{H})$, $|x-y|\leq C_{H}\rho
^{n}$ implies that\ $x,y$ lie in the same or the neighboring $n$-cells. For
every $w\in W_{n}$ and $x\in f(K_{w})$, if $y\in B(x,C_{H}\rho ^{n})$, then
there exists $(g,\tilde{w})\in F\times W_{n}$ such that $y\in g(K_{\tilde{w}%
})$ and
\begin{equation*}
g(K_{\tilde{w}})\cap f(K_{w})\neq \emptyset .
\end{equation*}%
Therefore,
\begin{align*}
I_{m,n}^{F}(u)\leq & \sum_{|w|=n}\sum_{f\in F}\int_{f(K_{w})}\left( \sum_{|%
\tilde{w}|=n}\sum_{g\in F}\int_{g(K_{\tilde{w}})}|u(x)-u(y)|^{p}1_{\{g(K_{%
\tilde{w}})\cap f(K_{w})\neq \emptyset \}}d\mu _{m}^{F}(y)\right) d\mu
_{m}^{F}(x) \\
=& \sum_{|w|=n}\sum_{f\in F}\sum_{|\tilde{w}|=n}\sum_{g\in F}\sum_{x\in
f(K_{w}\cap V_{m})}\sum_{y\in g(K_{\tilde{w}}\cap V_{m})}\frac{1_{\{g(K_{%
\tilde{w}})\cap f(K_{w})\neq \emptyset \}}}{|V_{m}|^{2}}|u(x)-u(y)|^{p}.
\end{align*}%
When $g(K_{\tilde{w}})\cap f(K_{w})\neq \emptyset $, we can find a common
vertex of these two cells and denote it by $z\in f(V_{w})\cap g(V_{\tilde{w}%
})$, due to the just-touching property when $f\neq g$ and the p.c.f.
structure when $f=g$. Furthermore, by the inequality $%
|u(x)-u(y)|^{p}\leq 2^{p-1}\left( |u(x)-u(z)|^{p}+|u(z)-u(y)|^{p}\right) $,
\begin{equation*}
1_{\{g(K_{\tilde{w}})\cap f(K_{w})\neq \emptyset \}}|u(x)-u(y)|^{p}\leq
2^{p-1}\sum_{z\in f(V_{w})\cap g(V_{\tilde{w}})}\left(
|u(x)-u(z)|^{p}+|u(z)-u(y)|^{p}\right) .
\end{equation*}%
It follows that
\begin{align}
I_{m,n}^{F}(u)\leq & 2^{p-1}\sum_{|w|=n}\sum_{f\in F}\sum_{|\tilde{w}%
|=n}\sum_{g\in F}\sum_{x\in f(K_{w}\cap V_{m})}\sum_{y\in g(K_{\tilde{w}%
}\cap V_{m})}\sum_{z\in f(V_{w})\cap g(V_{\tilde{w}})}\frac{1}{|V_{m}|^{2}}%
\left( |u(x)-u(z)|^{p}+|u(z)-u(y)|^{p}\right)  \notag \\
\leq & C_{1}N^{-2m}\sum_{|w|=n}\sum_{f\in F}\sum_{|\tilde{w}|=n}\sum_{g\in
F}\sum_{z\in f(V_{w})\cap g(V_{\tilde{w}})}\sum_{x\in f(K_{w}\cap
V_{m})}\sum_{y\in g(K_{\tilde{w}}\cap V_{m})}\left(
|u(x)-u(z)|^{p}+|u(z)-u(y)|^{p}\right)  \notag \\
\leq & C_{1}N^{-2m}\sum_{|w|=n}\sum_{f\in F}\sum_{z\in f(V_{w})}\sum_{x\in
f(K_{w}\cap V_{m})}|u(x)-u(z)|^{p}\left( \sum_{|\tilde{w}|=n}\sum_{g\in
F}1_{\{z\in f(V_{w})\cap g(V_{\tilde{w}})\}}|g(K_{\tilde{w}}\cap
V_{m})|\right)  \notag \\
& +C_{1}N^{-2m}\sum_{|\tilde{w}|=n}\sum_{g\in F}\sum_{z\in g(V_{\tilde{w}%
})}\sum_{y\in g(K_{\tilde{w}}\cap V_{m})}|u(z)-u(y)|^{p}\left(
\sum_{|w|=n}\sum_{f\in F}1_{\{z\in f(V_{w})\cap g(V_{\tilde{w}%
})\}}|f(K_{w}\cap V_{m})|\right)  \notag \\
\leq & C_{2}N^{-(m+n)}\left(\sum_{|w|=n}\sum_{f\in F}\sum_{z\in
f(V_{w})}\sum_{x\in f(K_{w}\cap V_{m})}|u(x)-u(z)|^{p} +\sum_{|\tilde{w}%
|=n}\sum_{g\in F}\sum_{z\in g(V_{\tilde{w}})}\sum_{y\in g(K_{\tilde{w}}\cap
V_{m})}|u(z)-u(y)|^{p} \right)  \notag \\
=& 2C_{2}N^{-(m+n)}\sum_{|w|=n}\sum_{f\in F}\sum_{x\in f(K_{w}\cap
V_{m})}\sum_{z\in f(V_{w})}|u(x)-u(z)|^{p},  \label{eq5.6}
\end{align}%
where we use Proposition \ref{4.1} for each common vertex $%
z\in f(V_{w})\cap g(V_{\tilde{w}})$ in the last inequality, so that
\begin{equation*}
\sum_{|\tilde{w}|=n}\sum_{g\in F}1_{\{z\in f(V_{w})\cap g(V_{\tilde{w}})\}}
\end{equation*}%
is uniformly bounded for any $(f,w)\in F\times W_{n}$, and the
estimates
\begin{eqnarray}
|V_{m}| &\asymp &N^{m}=\rho ^{-\alpha m},  \label{v1} \\
\text{\ }|K_{w}\cap V_{m}| &\asymp &N^{m-n}\text{ for every }w\in W_{n}.
\label{v2}
\end{eqnarray}%
For $x\in f(V_{m}),z\in f(V_{n})$, one
can naturally fix a decreasing sequence of cells $K_{w_{k}}$ with $|w_{k}|=k$
for $k=n,\cdots ,m$ and vertices $x_{k}(x,z)\in V_{w_{k}}$ such that, $%
z=x_{n}(x,z),\ x=x_{m}(x,z)$, so by H\"{o}lder's inequality we have
\begin{align}
|u(z)-u(x)|^{p}\leq & \left( \sum_{k=n}^{m-1}N^{(n-k)q/p}\right)
^{p/q}\left(
\sum_{k=n}^{m-1}N^{k-n}|u(x_{k}(x,z))-u(x_{k+1}(x,z))|^{p}\right)  \notag \\
\leq & C_{3}\sum_{k=n}^{m-1}N^{k-n}|u(x_{k}(x,z))-u(x_{k+1}(x,z))|^{p},
\label{u_xz}
\end{align}%
where $q=p/(p-1)$. Note that for every pair $(a,b)\in f(K_{w^{\prime }}\cap
V_{k})\times f(K_{w^{\prime }}\cap V_{k+1})$, the cardinality of $(x,z)\in
f(K_{w}\cap V_{m})\times f(V_{w})$ with $(x_{k}(x,z),x_{k+1}(x,z))=(a,b)$ is
no greater than $C^{\prime }N^{m-k}$ for some $C^{\prime }>0$, due to the
p.c.f. structure. Plugging \eqref{u_xz} into \eqref{eq5.6}, we obtain
\begin{align}
I_{m,n}^{F}(u)& \leq C_{4}N^{-(m+n)}\sum_{|w|=n}\sum_{f\in F}\sum_{(x,z)\in
f(K_{w}\cap V_{m})\times
f(V_{w})}\sum_{k=n}^{m-1}N^{k-n}|u(x_{k}(x,z))-u(x_{k+1}(x,z))|^{p}  \notag
\\
& \leq C_{4}N^{-(m+n)}\sum_{|w|=n}\sum_{k=n}^{m-1}\sum_{f\in F}\sum_{\QATOP{%
|w^{\prime }|=k}{f(K_{w^{\prime }})\subset f(K_{w})}}\sum_{\QATOP{(x,z)\in
f(K_{w}\cap V_{m})\times f(V_{w}),}{(a,b)\in f(K_{w^{\prime }}\cap
V_{k})\times f(K_{w^{\prime }}\cap V_{k+1})}}N^{k-n}|u(a)-u(b)|^{p}  \notag
\\
& \leq C_{5}N^{-(m+n)}\sum_{|w|=n}\sum_{k=n}^{m-1}\sum_{f\in F}\sum_{\QATOP{%
|w^{\prime }|=k}{f(K_{w^{\prime }})\subset f(K_{w})}}\sum_{a,b\in
f(K_{w^{\prime }}\cap V_{k+1})}N^{k-n}\cdot N^{m-k}|u(a)-u(b)|^{p}  \notag \\
& =C_{5}\rho ^{2n\alpha }\sum_{k=n}^{m-1}\sum_{|w|=n}\sum_{f\in F}\sum_{%
\QATOP{|w^{\prime }|=k}{f(K_{w^{\prime }})\subset f(K_{w})}}\sum_{a,b\in
f(K_{w^{\prime }}\cap V_{k+1})}|u(a)-u(b)|^{p}  \notag \\
& \leq C_{6}\rho ^{2n\alpha }\sum_{k=n}^{m-1}\sum_{f\in
F}E_{k+1}^{(p)}(u\circ f)\leq C_{6}\rho ^{2n\alpha
}\sum_{k=n}^{m}E_{k}^{(p),F}(u),  \label{I_mn}
\end{align}%
where we use \eqref{p_energy} in the last line. It follows that
\begin{equation*}
I_{m,n}^{F}(u)\leq C_{6}\rho ^{2n\alpha }\rho ^{n(p\sigma -\alpha
)}\sum_{k=n}^{m}\rho ^{-n(p\sigma -\alpha )}E_{k}^{(p),F}(u)\leq C_{7}\rho
^{n(p\sigma +\alpha )}\sup_{k\geq n}\mathcal{E}_{k}^{\sigma ,F}(u),
\end{equation*}%
which ends the proof by letting $m\rightarrow \infty $ and using Lemma \ref%
{lem4.4}.
\end{proof}

The following simple proposition is required.

\begin{proposition}
\label{prop:I}For all $u\in B_{p,\infty }^{\sigma ,F}$, we have
\begin{eqnarray}
\liminf_{n\rightarrow \infty }\rho ^{-n(p\sigma +\alpha )}I_{\infty
,n}^{F}(u) &\geq &\rho ^{p\sigma +\alpha }C_{H}^{p\sigma +\alpha
}\liminf_{n\rightarrow \infty }\Phi _{u}^{\sigma ,F}(\rho ^{n}),
\label{II-1} \\
\limsup_{n\rightarrow \infty }\rho ^{-n(p\sigma +\alpha )}I_{\infty
,n}^{F}(u) &\geq &\rho ^{p\sigma +\alpha }C_{H}^{p\sigma +\alpha
}\limsup_{n\rightarrow \infty }\Phi _{u}^{\sigma ,F}(\rho ^{n}).
\label{II-3}
\end{eqnarray}
\end{proposition}

\begin{proof}
Since $C_{H}\in (0,1)$, assume that $\rho ^{s+1}\leq C_{H}<\rho ^{s}$ for
some nonnegative integer $s$. Then, for any integer $n$,
\begin{eqnarray}
\rho ^{-n(p\sigma +\alpha )}I_{\infty ,n}^{F}(u) &=&\rho ^{-n(p\sigma
+\alpha )}\int_{K^{F}}\int_{B(x,C_{H}\rho ^{n})}|u(x)-u(y)|^{p}d\mu
^{F}(y)d\mu ^{F}(x)  \notag \\
&\geq &\rho ^{-n(p\sigma +\alpha )}\int_{K^{F}}\int_{B(x,\rho
^{n+s+1})}|u(x)-u(y)|^{p}d\mu ^{F}(y)d\mu ^{F}(x)  \label{II-2} \\
&=&\rho ^{(s+1)(p\sigma +\alpha )}\rho ^{-(n+s+1)(p\sigma +\alpha
)}\int_{K^{F}}\int_{B(x,\rho ^{n+s+1})}|u(x)-u(y)|^{p}d\mu ^{F}(y)d\mu
^{F}(x)  \notag \\
&>&\rho ^{p\sigma +\alpha }C_{H}^{p\sigma +\alpha }\rho ^{-(n+s+1)(p\sigma
+\alpha )}\int_{K^{F}}\int_{B(x,\rho ^{n+s+1})}|u(x)-u(y)|^{p}d\mu
^{F}(y)d\mu ^{F}(x).  \notag
\end{eqnarray}%
Taking $\liminf_{n\rightarrow \infty }$ and $\limsup_{n\rightarrow \infty }$
on both sides, we obtain (\ref{II-1}) and (\ref{II-3}).
\end{proof}

Denote
\begin{equation*}
I_{n}^{F}(u):=\int_{K^{F}}\int_{\{\rho ^{n+1}\leq d(x,y)<\rho
^{n}\}}|u(x)-u(y)|^{p}d\mu ^{F}(y)d\mu ^{F}(x).
\end{equation*}

\begin{lemma}
\label{lem6.3} Let $K^{F}$ be a fractal glue-up. If $\sigma >\alpha /p$,
then for any $\delta \in (0,p\sigma -\alpha )$ and $u\in B_{p,\infty
}^{\sigma ,F}$,
\begin{equation}
\mathcal{E}_{n}^{\sigma ,F}(u)\leq C\sum_{k=0}^{\infty }\rho ^{(-2\alpha
-\delta )k}\rho ^{-(p\sigma +\alpha )n}I_{k+n}^{F}\leq C^{\prime
}\sup_{k\geq 0}\Phi _{u}^{\sigma ,F}(\rho ^{n+k}).  \label{eq6.4}
\end{equation}
\end{lemma}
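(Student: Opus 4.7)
The plan is to prove the two inequalities separately. The second inequality is elementary: by the $\alpha$-regularity of $\mu^F$,
\begin{equation*}
I_{n+k}^F \leq \int_{K^F}\int_{B(x,\rho^{n+k})} |u(x)-u(y)|^p d\mu^F(y)\,d\mu^F(x) \leq C \rho^{(n+k)(p\sigma+\alpha)} \sup_{j\geq 0}\Phi_u^\sigma(\rho^{n+j}),
\end{equation*}
so the middle expression in \eqref{eq6.4} is bounded by $C\sum_{k\geq 0} \rho^{k(p\sigma-\alpha-\delta)} \sup_{j\geq 0}\Phi_u^\sigma(\rho^{n+j})$, and the geometric series converges precisely because $\delta<p\sigma-\alpha$ and $\rho<1$.

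For the first inequality, since $\sigma>\alpha/p$, Lemma \ref{lemmaMR} provides a H\"older-continuous version of $u$, so the ball averages $\bar u_k(x):=\fint_{B(x,c\rho^k)}u\,d\mu^F$ (for fixed $c\in(0,C_H/2)$) converge pointwise to $u(x)$ as $k\to\infty$. For a vertex pair $x,y\in f(V_w)$ with $f\in F$ and $|w|=n$, telescope
\begin{equation*}
u(x)-u(y)=(\bar u_n(x)-\bar u_n(y))+\sum_{k=n}^{\infty}\bigl[(\bar u_{k+1}(x)-\bar u_k(x))-(\bar u_{k+1}(y)-\bar u_k(y))\bigr],
\end{equation*}
and apply H\"older's inequality with weights $\rho^{-\delta(k-n)}$ (whose conjugate duals $\rho^{\delta(k-n)/(p-1)}$ form a summable geometric series) to bound $|u(x)-u(y)|^p$ by a constant times $|\bar u_n(x)-\bar u_n(y)|^p+\sum_{k\geq n}\rho^{-\delta(k-n)}(|\bar u_{k+1}(x)-\bar u_k(x)|^p+|\bar u_{k+1}(y)-\bar u_k(y)|^p)$. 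Each increment is controlled by Jensen as $\fint_{B(x,c\rho^{k+1})}\fint_{B(x,c\rho^k)}|u(z)-u(w)|^p d\mu^F(z)d\mu^F(w)$, with integrand supported on $\{d(z,w)\leq 2c\rho^k\}$.

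The key aggregation step sums these bounds over level-$n$ vertex pairs in $K^F$. Each vertex $x\in\bigcup_{f\in F}f(V_n)$ lies in $O(1)$ such pairs, and since this set is $\rho^n$-separated, for $k\geq n$ each ball $B(z,c\rho^{k+1})$ contains $O(1)$ of these vertices; interchanging summation and integration and applying $\alpha$-regularity gives $\sum_{x}|\bar u_{k+1}(x)-\bar u_k(x)|^p\leq C\rho^{-2\alpha k}I_{\infty,k}^F$, and analogously $\sum_{(x,y)}|\bar u_n(x)-\bar u_n(y)|^p\leq C\rho^{-2\alpha n}I_{\infty,n}^F$. Writing $I_{\infty,k}^F$ as the tail sum $\sum_{m\geq k}I_m^F$ (up to shifts and constants coming from $C_H$), substituting, and reindexing with $L=(k-n)+(m-k)$ so that the double sum $\sum_{L,M\geq 0}\rho^{-(2\alpha+\delta)L}I_{n+L+M}^F$ collapses to $C\sum_{L\geq 0}\rho^{-(2\alpha+\delta)L}I_{n+L}^F$ (via the fact that a geometric series with ratio $\rho^{-(2\alpha+\delta)}>1$ is dominated by its last term), we obtain $E_n^{(p),F}(u)\leq C\rho^{-2\alpha n}\sum_{L\geq 0}\rho^{-(2\alpha+\delta)L}I_{n+L}^F$, which multiplied by $\rho^{-n(p\sigma-\alpha)}$ yields the first inequality. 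The main obstacle is the multiplicity bookkeeping in this aggregation step, which rests on Condition (H) and the uniform finitely-joint property built into the fractal glue-up structure.
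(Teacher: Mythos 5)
Your argument takes a genuinely different route from the paper's. The paper never forms ball averages: it bounds $|u(a)-u(b)|^p$ for vertices $a,b\in f(V_w)$ by averaging over a point $x$ in the \emph{common cell} $f(K_w)$, then telescopes along a decreasing chain of cells $f(K_{w_k})\ni a$, choosing $x_k\in f(K_{w_k})$ and applying H\"older with the same weights $\rho^{\delta(n-k)}$; the remainder $|u(a)-u(x_m)|^p$ is killed as $m\to\infty$ via the Morrey--Sobolev Lemma \ref{lemmaMR}, and each increment $\int\int|u(x_k)-u(x_{k+1})|^p$ sits inside $B(x,\rho^k)$, which decomposes \emph{exactly} into the rings $I_l^F$ with $l\ge k$. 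Your ball-average telescoping $\bar u_k(x)=\fint_{B(x,c\rho^k)}u$ is a legitimate alternative; the weighted H\"older step, the Jensen bound on increments, the multiplicity bookkeeping via Condition (H), the uniform finitely-joint property and the $\asymp\rho^n$-separation of the glued vertex set, and the final interchange/collapse of the double sum are all correct and parallel the paper's computation leading to \eqref{I_2}. Both routes use $\sigma>\alpha/p$ in the same place (to have a continuous representative) and $\delta<p\sigma-\alpha$ only for the final geometric series.

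There is, however, one concrete slip in your base term. For $x,y$ in the same $n$-cell one only has $d(x,y)\le\rho^n$, so the pairs $(z,w)$ entering $|\bar u_n(x)-\bar u_n(y)|^p$ satisfy $d(z,w)\le(1+2c)\rho^n$, which exceeds $C_H\rho^n$ (recall $C_H<1$). Hence $\sum_{(x,y)}|\bar u_n(x)-\bar u_n(y)|^p\le C\rho^{-2\alpha n}I_{\infty,n}^F$ is not available; you only get $I_{\infty,n-s}^F$ for a fixed shift $s$ with $C_H\rho^{-s}\ge 1+2c$, which injects the rings $I_{n-s}^F,\dots,I_{n-1}^F$ into your final sum. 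As written, this proves the first inequality of \eqref{eq6.4} only with the sum starting at $k=-s$, and correspondingly the second with $\sup_{k\ge -s}\Phi_u^{\sigma}(\rho^{n+k})$ — not the stated $k\ge 0$ form, which is the form Lemma \ref{thm_3} actually truncates. The fix is exactly what the paper does at the start of its proof: replace the symmetric difference of two ball averages by $|u(x)-u(y)|^p\le 2^{p-1}(|u(x)-u(z)|^p+|u(z)-u(y)|^p)$ with $z$ averaged over the common cell $f(K_w)$ (so all pairs at distance $\le\rho^n$, whose ring decomposition starts at $l=n$), and only then telescope downward from the cell.
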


\begin{proof}
For $a,b\in f(K_{w})$, we have $|u(a)-u(b)|^{p}\leq
2^{p-1}(|u(a)-u(x)|^{p}+|u(x)-u(b)|^{p})$, where $x\in f(K_{w})$ and $f\in F$%
. Approximate $F$ with increasing finite sets $F_{j}$ so that $%
F=\lim_{j\rightarrow \infty }F_{j}$. Integrating with respect to $x$ and
dividing by $\mu ^{F}(f(K_{w}))$, we have
\begin{align}
E_{n}^{(p),F_{j}}(u)& =\sum_{f\in F_{j}}\sum_{a,b\in
f(V_{w}),|w|=n}|u(a)-u(b)|^{p}  \notag \\
& \leq 2^{p-1}\sum_{f\in F_{j}}\sum_{a,b\in f(V_{w}),a\neq b,|w|=n}\left(
\frac{1}{\mu ^{F}(f(K_{w}))}\int_{f(K_{w})}|u(a)-u(x)|^{p}+|u(x)-u(b)|^{p}d%
\mu ^{F}(x)\right)  \notag \\
& \leq 2^{p-1}|V_{0}|\sum_{f\in F_{j}}\sum_{a\in f(V_{w}),|w|=n}\frac{1}{\mu
^{F}(f(K_{w}))}\int_{f(K_{w})}|u(a)-u(x)|^{p}d\mu ^{F}(x).  \label{lem3.0}
\end{align}%
For every $a\in f(V_{w})$ with $|w|=n$, we can fix a decreasing sequence of
cells $\{f(K_{w_{k}})\}_{k=n}^{m}$ where $|w_{k}|=k$ for all large enough $%
m>n$, such that $a\in \cap _{k=n}^{m}f(K_{w_{k}})$ with $w_{n}=w$. We choose
$x_{k}\in f(K_{w_{k}})$ for $k=n,n+1,...m$. Let $\delta \in (0,p\sigma
-\alpha )$. By H\"{o}lder's inequality,
\begin{align}
& |u(a)-u(x_{n})|^{p}  \notag \\
\leq & 2^{p-1}|u(a)-u(x_{m})|^{p}+2^{p-1}\left( \sum_{k=n}^{m-1}\rho
^{\delta (k-n)q/p}\right) ^{p/q}\left( \sum_{k=n}^{m-1}\rho ^{\delta
(n-k)}|u(x_{k})-u(x_{k+1})|^{p}\right) ,  \label{203}
\end{align}%
where $q=p/(p-1)$. Integrating \eqref{203} with respect to $x_{k}\in
K_{w_{k}}$ and dividing by $\mu ^{F}(f(K_{w_{k}}))$,
\begin{align*}
E_{n}^{(p),F_{j}}(u)\leq & C_{1}\sum_{f\in F_{j}}\sum_{a\in f(V_{w}),|w|=n}%
\frac{2^{p-1}}{\mu ^{F}(f(K_{w_{m}}))}%
\int_{f(K_{w_{m}})}|u(a)-u(x_{m})|^{p}d\mu ^{F}(x_{m}) \\
& +C_{2}\sum_{f\in F_{j}}\sum_{k=n}^{m-1}\frac{\rho ^{\delta (n-k)}}{\mu
^{F}(f(K_{w_{k}}))\mu ^{F}(f(K_{w_{k+1}}))}\int_{f(K_{w_{k}})}%
\int_{f(K_{w_{k+1}})}|u(x_{k})-u(x_{k+1})|^{p}d\mu ^{F}(x_{k+1})d\mu
^{F}(x_{k}).
\end{align*}%
Using Lemma \ref{lemmaMR}, (\ref{BC}) and (\ref{v1}), the first term of the
right-hand side above vanishes as $m\rightarrow \infty $ since
\begin{align*}
& \sum_{a\in f(V_{w}),|w|=n}\frac{2^{p-1}}{\mu ^{F}(f(K_{w_{m}}))}%
\int_{f(K_{w_{m}})}|u(a)-u(x_{m})|^{p}d\mu ^{F}(x_{m}) \\
\leq & C|f(V_{n})|2^{p-1}\rho ^{(p\sigma -\alpha )m}[u]_{B_{p,\infty
}^{\sigma }(K^{F})}^{p}\leq C^{\prime }|f(V_{n})|2^{p-1}\rho ^{(p\sigma
-\alpha )m}[u]_{B_{p,\infty }^{\sigma ,F}}^{p} \\
\leq & C_{3}\rho ^{(p\sigma -\alpha )m-\alpha n}[u]_{B_{p,\infty }^{\sigma
,F}}^{p}\rightarrow 0.
\end{align*}%
Letting $m\rightarrow \infty $, we have
\begin{align}
E_{n}^{(p),F_{j}}(u)& \leq C_{4}\sum_{k=n}^{\infty }\rho ^{\delta (n-k)}\rho
^{-2\alpha k}\int_{K^{F}}\int_{B(x,\rho ^{k})}|u(x)-u(y)|^{p}d\mu
^{F}(y)d\mu ^{F}(x)  \notag \\
& =C_{4}\sum_{k=n}^{\infty }\rho ^{\delta (n-k)}\rho ^{-2\alpha
k}\sum_{l=k}^{\infty }I_{l}^{F}(u)=C_{4}\sum_{k=0}^{\infty }\rho ^{-\delta
k}\rho ^{-2\alpha (n+k)}\sum_{l=k}^{\infty }I_{l+n}^{F}(u)  \notag \\
& =C_{4}\sum_{l=0}^{\infty }\left( \sum_{k=0}^{l}\rho ^{-\delta k}\rho
^{-2\alpha (n+k)}\right) I_{l+n}^{F}(u)=C_{4}\sum_{l=0}^{\infty }\left(
\sum_{k=0}^{l}\rho ^{\delta (l-k)}\rho ^{2\alpha (l-k)}\right) \rho
^{-\delta l}\rho ^{-2\alpha (l+n)}I_{l+n}^{F}(u)  \notag \\
& \leq C_{5}\sum_{l=0}^{\infty }\rho ^{-\delta l}\rho ^{-2\alpha
(l+n)}I_{l+n}^{F}(u)=C_{5}\sum_{k=0}^{\infty }\rho ^{-\delta k}\rho
^{-2\alpha (k+n)}I_{k+n}^{F}(u).  \label{I_2}
\end{align}%
Letting $j\rightarrow \infty $, we have%
\begin{eqnarray*}
\mathcal{E}_{n}^{\sigma ,F}(u) &=&\rho ^{-(p\sigma -\alpha
)n}E_{n}^{(p),F}(u)\leq C_{5}\sum_{k=0}^{\infty }\rho ^{(-2\alpha -\delta
)k}\rho ^{-(p\sigma +\alpha )n}I_{k+n}^{F}(u) \\
&\leq &C_{5}\sum_{k=0}^{\infty }\rho ^{(-2\alpha -\delta )k}\rho ^{-(p\sigma
+\alpha )n}\int_{K^{F}}\int_{B(x,\rho ^{n+k})}|u(x)-u(y)|^{p}d\mu
^{F}(y)d\mu ^{F}(x) \\
&\leq &C_{5}\sum_{k=0}^{\infty }\rho ^{(p\sigma +\alpha -2\alpha -\delta
)k}\sup_{k\geq 0}\Phi _{u}^{\sigma ,F}(\rho ^{n+k}) \\
&=&C_{5}\left( \sum_{k=0}^{\infty }\rho ^{(p\sigma -\alpha -\delta
)k}\right) \sup_{k\geq 0}\Phi _{u}^{\sigma ,F}(\rho ^{n+k})=\frac{C_{5}}{%
1-\rho ^{p\sigma -\alpha -\delta }}\sup_{k\geq 0}\Phi _{u}^{\sigma ,F}(\rho
^{n+k}),
\end{eqnarray*}%
where the second inequality follows from enlarging the integral region.
\end{proof}

We need the following proposition to manage the transitions between
different levels of vertex energies.

\begin{proposition}
Let $K$ be a connected homogeneous p.c.f. self-similar set, then there
exists $C>0$ such that for all $u\in L^{p}(K,\mu )$ and nonnegative integer $%
n$,
\begin{equation*}
E_{n}^{(p)}(u)\leq CE_{n+1}^{(p)}(u).
\end{equation*}%
Moreover, there exists $C>0$ such that for all $u\in L^{p}(K^{F},\mu ^{F})$,
\begin{equation}
E_{n}^{(p),F}(u)\leq CE_{n+1}^{(p),F}(u).  \label{E-n}
\end{equation}
\end{proposition}

\begin{proof}
We start with the basic case $n=0$. For any $x,y\in V_{0}$, we can fix a
vertex-disjoint path $x=z_{1}(x,y),z_{2}(x,y),\cdots ,z_{k}(x,y)=y$ such
that $z_{j}(x,y)$ and $z_{j+1}(x,y)$ ($1\leq j\leq k$) belong to the same $%
V_{w}$ for some $w\in W_{1}$, and $2\leq k\leq |V_{1}|$. Using Jensen's
inequality,
\begin{align*}
|u(x)-u(y)|^{p}& \leq
(k-1)^{p-1}\sum_{j=1}^{k-1}|u(z_{j}(x,y))-u(z_{j+1}(x,y))|^{p} \\
& \leq |V_{1}|^{p-1}\sum_{j=1}^{k-1}|u(z_{j}(x,y))-u(z_{j+1}(x,y))|^{p}.
\end{align*}%
It follows that
\begin{align*}
E_{0}^{(p)}(u)& \leq \sum_{x,y\in
V_{0}}|V_{1}|^{p-1}\sum_{j=1}^{k-1}|u(z_{j}(x,y))-u(z_{j+1}(x,y))|^{p} \\
& \leq |V_{0}|^{2}|V_{1}|^{p-1}\sum_{a,b\in V_{w},|w|=1}|u(a)-u(b)|^{p} \\
& =|V_{0}|^{2}|V_{1}|^{p-1}E_{1}^{(p)}(u),
\end{align*}%
since for each pair $(a,b)\in V_{w}$ with $|w|=1$, there exists at most one $%
j\in \{1,2,...k\}$ (depending on $(x,y)$) with $%
(z_{j}(x,y),z_{j+1}(x,y))=(a,b)$ for any $x,y\in V_{0}$. Thus, we show the
desired for $n=0$ with $C:=|V_{0}|^{2}|V_{1}|^{p-1}$ for any $u$. The rest
simply follows from that
\begin{align*}
E_{n}^{(p)}(u)& =\sum_{x,y\in
V_{w},|w|=n}|u(x)-u(y)|^{p}=\sum_{|w|=n}E_{0}^{(p)}(u\circ \phi _{w}) \\
& \leq C\sum_{|w|=n}E_{1}^{(p)}(u\circ \phi _{w})=C\sum_{a,b\in
V_{wi},|w|=n,|i|=1}|u(a)-u(b)|^{p}=CE_{n+1}^{(p)}(u).
\end{align*}%
The second inclusion follows from the definition of $K^{F}$.
\end{proof}

In \cite[Theorem 1.4]{GaoYuZhang2022PA}, for a homogeneous p.c.f.
self-similar set $K$, we established the equivalence
\begin{equation}
\sup\limits_{n\geq 0}\rho ^{-n(p\sigma -\alpha )}E_{n}^{(p)}(u)\asymp
\lbrack u]_{B_{p,\infty }^{\sigma }(K)}^{p}\text{ \ (}p\sigma >\alpha \text{)%
},  \label{204}
\end{equation}%
where $E_{n}^{(p)}(u)$ is from (\ref{EE}).

Using Lemma \ref{lem6.2} and Lemma \ref{lem6.3}, we immediately derive such
equivalence for $K^{F}$. Define the discrete local $p$-energy for $K^{F}$ by
\begin{equation*}
\mathcal{E}_{p,\infty }^{\sigma ,F}(u):=\sup\limits_{n\geq 0}\rho
^{-n(p\sigma -\alpha )}E_{n}^{(p),F}(u)=\sup_{n\geq 0}\mathcal{E}%
_{n}^{\sigma ,F}(u).
\end{equation*}

\begin{corollary}
\label{lem6.1} Let $K^{F}$ be a fractal glue-up. If $\sigma>\alpha /p$, then
for all $u\in B_{p,\infty }^{\sigma ,F}$,
\begin{equation}
\limsup_{n\rightarrow \infty }\mathcal{E}_{n}^{\sigma ,F}(u)\asymp
\limsup_{n\rightarrow \infty }\Phi _{u}^{\sigma ,F}(\rho ^{n}),\ \mathcal{E}%
_{p,\infty }^{\sigma ,F}(u)\asymp \lbrack u]_{B_{p,\infty}^{\sigma ,F}}^{p}.
\label{eq6.2}
\end{equation}
\end{corollary}

\begin{proof}
Taking limsup and sup (of $n$) on both sides of \eqref{eq6.3} and %
\eqref{eq6.4}, we have
\begin{eqnarray}
\limsup_{n\rightarrow \infty }\rho ^{-n(p\sigma +\alpha )}I_{\infty
,n}^{F}(u) &\leq &C\limsup_{n\rightarrow \infty }\mathcal{E}_{n}^{\sigma
,F}(u)\leq C^{\prime }\limsup_{n\rightarrow \infty }\Phi _{u}^{\sigma
,F}(\rho ^{n}),  \label{C6.1-1} \\
\sup\limits_{n\geq 0}\rho ^{-n(p\sigma +\alpha )}I_{\infty ,n}^{F}(u) &\leq
&C\sup\limits_{n\geq 0}\mathcal{E}_{n}^{\sigma ,F}(u).  \label{C6.1-2}
\end{eqnarray}

By (\ref{II-3}), we have%
\begin{equation*}
\limsup_{n\rightarrow \infty }\rho ^{-n(p\sigma +\alpha )}I_{\infty
,n}^{F}(u)\geq \rho ^{p\sigma +\alpha }C_{H}^{p\sigma +\alpha
}\limsup_{n\rightarrow \infty }\Phi _{u}^{\sigma ,F}(\rho ^{n}),
\end{equation*}%
thus showing
\begin{equation*}
\limsup_{n\rightarrow \infty }\mathcal{E}_{n}^{\sigma ,F}(u)\asymp
\limsup_{n\rightarrow \infty }\Phi _{u}^{\sigma ,F}(\rho ^{n})
\end{equation*}%
by (\ref{C6.1-1}). On the other hand, we have from (\ref{ct-1}) and (\ref{ct}%
) that
\begin{equation}
\sup\limits_{n\geq 0}\rho ^{-n(p\sigma +\alpha )}I_{\infty
,n}^{F}(u)=C_{H}^{p\sigma +\alpha }\sup\limits_{n\geq 0}\Phi _{u}^{\sigma
,F}(C_{H}\rho ^{n})=C_{H}^{p\sigma +\alpha }[u]_{B_{p,\infty }^{\sigma
,F}}^{p}  \label{C6.1-0}
\end{equation}

Since $C_{H}\in (0,1)$, assume that $\rho ^{s+1}\leq C_{H}<\rho ^{s}$ for
some nonnegative integer $s$. Replacing $n$ by $n+s+1$ in (\ref{eq6.4}), we
obtain%
\begin{equation}
\mathcal{E}_{n+s+1}^{\sigma ,F}(u)\leq C\sup_{k\geq 0}\Phi _{u}^{\sigma
,F}(\rho ^{n+s+1+k})\leq C^{\prime }\sup_{k\geq 0}\Phi _{u}^{\sigma
,F}(C_{H}\rho ^{n+k}).  \label{C6.1-3}
\end{equation}
By (\ref{E-n}),%
\begin{equation*}
\mathcal{E}_{n+s+1}^{\sigma ,F}(u)=\rho ^{-(n+s+1)(p\sigma -\alpha
)}E_{n+s+1}^{(p),F}(u)\geq C^{-s-1}\rho ^{-(n+s+1)(p\sigma -\alpha
)}E_{n}^{(p),F}(u)=C^{-s-1}\rho ^{-(s+1)(p\sigma -\alpha )}\mathcal{E}%
_{n}^{\sigma ,F}(u),
\end{equation*}
thus by (\ref{C6.1-3}),
\begin{equation}
\mathcal{E}_{n}^{\sigma ,F}(u)\leq C^{\prime \prime }C_{H}^{p\sigma -\alpha
}\sup_{k\geq 0}\Phi _{u}^{\sigma ,F}(C_{H}\rho ^{n+k}).  \label{C6.1-4}
\end{equation}

We conclude from (\ref{C6.1-2}), (\ref{C6.1-0}) and (\ref{C6.1-4}) that%
\begin{equation*}
\mathcal{E}_{p,\infty }^{\sigma ,F}(u)\asymp \lbrack u]_{B_{p,\infty
}^{\sigma ,F}}^{p}.
\end{equation*}

The proof is complete.
\end{proof}

\subsection{Property (E) holds for nested fractals}

\label{subsec5.2}For a set $V$, let $l (V)=\{u:$ $u$ maps $V$ into $%
\mathbb{R}\}$.

\begin{definition}
(\cite[Definition 3.1]{GaoYuZhang2022PA}) \label{dfE} We say that a
connected homogeneous p.c.f. self-similar set $K$ satisfies \textrm{property
(E)}, if there exist $\sigma>\alpha/p$ and a positive constant $C$ such that

\begin{itemize}
\item[(i)~] for any $u\in B_{p,\infty }^{\sigma}$ and for all $n\geq 0$, $%
\mathcal{E}_{0}^{\sigma}(u)\leq C\mathcal{E}_{n}^{\sigma}(u)$,

\item[(ii)] for any $u\in \ell (V_{0})$, there exists an extension $\tilde{u}%
\in B_{p,\infty }^{\sigma}$.
\end{itemize}
\end{definition}

We know from \cite[Remark 3.2]{GaoYuZhang2022PA} that, if property (E) holds with $\sigma$, then $\sigma=\sigma _{p}^{*}(K)$, and for all $n\geq m$, $\mathcal{E}_{m}^{\sigma
_{p}^{*}}(u)\leq C\mathcal{E}_{n}^{\sigma _{p}^{*}}(u)$. Thus we omit the index $\sigma$. The following proposition shows the importance of property (E).

\begin{proposition}
\label{prop:critical} If a connected homogeneous p.c.f. self-similar set $K$
satisfies property (E), then
\begin{equation}
\sigma _{p}^{\ast }(K)=\sigma _{p}^{\#}(K)=\sigma _{p}^{\#}(K^{F}).
\label{qnmd}
\end{equation}%
Moreover, $K^{F}$ satisfies property $(VE)_{\sigma _{p}^{\#}(K)}$, and $%
B_{p,\infty }^{\sigma _{p}^{\#}(K),F}$ contains non-constant functions.
\end{proposition}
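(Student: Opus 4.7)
The strategy is to combine the two clauses of property (E) with the self-similarity of $K$ and the tile decomposition $\mathcal{E}_n^{\sigma,F}(u)=\sum_{f\in F}\mathcal{E}_n^\sigma(u\circ f)$. The key tools are the discrete--Besov equivalence in Corollary \ref{lem6.1} and the scaling inequality $E_n^{(p)}(u\circ\phi_w)\le E_{n+|w|}^{(p)}(u)$ that follows from p.c.f.\ self-similarity. Throughout I write $\sigma^\ast=\sigma_p^\ast(K)$ for brevity. The four claims $\sigma^\ast=\sigma_p^\#(K)=\sigma_p^\#(K^F)$, property (VE) on $K^F$, and the existence of non-constants will be established in turn.

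\textbf{Property (VE) on $K^F$.} As noted in the remark after Definition \ref{dfE}, property (E) on $K$ yields the monotonicity-type bound $\mathcal{E}_m^{\sigma^\ast}(v)\le C\mathcal{E}_n^{\sigma^\ast}(v)$ for all $n\ge m$ and all $v\in B_{p,\infty}^{\sigma^\ast}(K)$. Applying this to each summand $u\circ f$ in the tile decomposition and summing over $f\in F$ (the isometries preserve discrete energies at each level), I obtain $\mathcal{E}_m^{\sigma^\ast,F}(u)\le C\mathcal{E}_n^{\sigma^\ast,F}(u)$ for all $n\ge m$. Taking $\sup_m$ on the left and $\liminf_n$ on the right delivers property (VE) with $\sigma^\ast$.

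\textbf{The identity $\sigma_p^\#(K)=\sigma^\ast$.} One inequality is free: (E)(ii) applied to any non-constant $v\in\ell(V_0)$ (possible since $|V_0|\ge2$) produces a non-constant extension in $B_{p,\infty}^{\sigma^\ast}$, so $\sigma_p^\#(K)\ge\sigma^\ast$. For the reverse, take $\sigma>\sigma^\ast$ and $u\in B_{p,\infty}^\sigma(K)$. By Corollary \ref{lem6.1}, $E_n^{(p)}(u)\le C\rho^{n(p\sigma-\alpha)}$, and the scaling inequality propagates the same bound to $u\circ\phi_w$ for every word $w$. Applying (E)(i) to $u\circ\phi_w$ gives $E_0^{(p)}(u\circ\phi_w)\le C\rho^{-n(p\sigma^\ast-\alpha)}E_n^{(p)}(u\circ\phi_w)\le C'\rho^{np(\sigma-\sigma^\ast)}\to0$ as $n\to\infty$. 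Hence $u$ is constant on every $V_w$; density of $V_\ast$ and the continuous representative from Lemma \ref{lemmaMR} force $u$ to be constant on $K$.

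\textbf{The identity $\sigma_p^\#(K^F)=\sigma^\ast$ and the existence of non-constants.} The upper bound $\sigma_p^\#(K^F)\le\sigma^\ast$ is obtained by restricting any non-constant candidate $u\in B_{p,\infty}^\sigma(K^F)$ to a tile $f(K)$ on which it is non-constant, reducing to the argument of the previous paragraph via the isometry $f$. The real work is the lower bound, i.e.\ producing a non-constant member of $B_{p,\infty}^{\sigma^\ast,F}$. I would pick $f_0\in F$ and build $u$ tile-by-tile: on $f_0(K)$ set $u=\tilde u_0\circ f_0^{-1}$ where $\tilde u_0$ is a non-constant extension given by (E)(ii); on each adjacent tile $f(K)$, take as $V_0$-data the values already assigned on the shared boundary vertices (free on the rest) and apply (E)(ii) again to extend into the tile, iterating outward. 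For bounded $F$ this terminates. For unbounded $F$ (such as the blow-up $K_\infty$), the technical difficulty is ensuring the resulting function has finite global Besov norm; this I would handle by exhibiting a non-constant $\tilde u_0\in B_{p,\infty}^{\sigma^\ast}(K)$ with $\tilde u_0|_{V_0}\equiv0$, gotten by taking the difference of two distinct (E)(ii) extensions of the same non-constant boundary data, and then extending by zero outside $f_0(K)$; finiteness of $[u]_{B_{p,\infty}^{\sigma^\ast,F}}$ then follows from Corollary \ref{lem6.1} together with the (VE) monotonicity established in the first step. The main obstacle is precisely this last construction---producing a non-constant element of $B_{p,\infty}^{\sigma^\ast}(K)$ with prescribed (vanishing) boundary trace---since property (E) as stated only asserts existence of some extension and leaves uniqueness open.
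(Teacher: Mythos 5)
Most of your proposal is sound and runs parallel to the paper: the (VE) transfer to $K^{F}$ via the tile decomposition $\mathcal{E}_{n}^{\sigma,F}(u)=\sum_{f\in F}\mathcal{E}_{n}^{\sigma}(u\circ f)$ is correct (your two-index summation followed by $\sup_{m}$ and $\liminf_{n}$ is a mild, if anything cleaner, variant of the paper's Fatou-lemma argument), the restriction-to-a-tile argument for $\sigma_{p}^{\#}(K^{F})\leq\sigma_{p}^{\#}(K)$ is exactly the paper's, and your proof that $\sigma_{p}^{\#}(K)=\sigma_{p}^{\ast}(K)$ supplies an argument the paper only cites from \cite[Proposition 3.4]{GaoYuZhang2022PA}.

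The genuine gap is the one you flagged yourself: producing a non-constant element of $B_{p,\infty}^{\sigma_{p}^{\#}(K),F}$ requires a non-constant $v\in B_{p,\infty}^{\sigma_{p}^{\#}(K)}(K)$ with $v|_{V_{0}}\equiv 0$, and your proposed source for it --- the difference of two \emph{distinct} (E)(ii)-extensions of the same boundary data --- is unavailable, since property (E) asserts only existence of an extension and nothing rules out uniqueness (for $p=2$ the harmonic extension \emph{is} unique, so the construction would produce the zero function). The paper closes this by prescribing data one level down: set $u=0$ on $f(V_{0})$ and $u=1$ on $f(V_{1}\setminus V_{0})$ (non-empty since $V_{1}\neq V_{0}$ for a connected $K$ with $N\geq 2$), then apply (E)(ii) \emph{inside each level-one cell} $\phi_{i}(K)$ to the restricted $\ell(\phi_{i}(V_{0}))$-data. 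Because an (E)(ii)-extension by definition agrees with the given data on the cell boundary, the pieces glue consistently along $V_{1}$, the resulting function really does vanish on $f(V_{0})$, and it is non-constant; its membership in $B_{p,\infty}^{\sigma_{p}^{\#}(K)}$ follows from the discrete norm equivalence \eqref{204} together with $E_{n}^{(p)}(u)=\sum_{i}E_{n-1}^{(p)}(u\circ\phi_{i})$ for $n\geq 1$. Extending by zero outside $f(K)$ then kills every term $E_{n}^{(p)}(u\circ g)$ with $g\neq f$ (neighbouring tiles meet $f(K)$ only inside $f(V_{0})$, where $u=0$), so $[u]_{B_{p,\infty}^{\sigma_{p}^{\#}(K),F}}^{p}\asymp\mathcal{E}_{p,\infty}^{\sigma_{p}^{\#}(K)}(u\circ f)<\infty$ with no appeal to (VE) or to the cardinality of $F$. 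This one-step device replaces both your outward tile-by-tile propagation and the difference trick.
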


\begin{proof}
The fact $\sigma _{p}^{\ast }(K)=\sigma _{p}^{\#}(K)$ follows from \cite[the
proof of Proposition 3.4]{GaoYuZhang2022PA}. It is also clear that $\sigma
_{p}^{\#}(K)\geq \sigma _{p}^{\#}(K^{F})$, since any non-constant function $%
u\in l(K^{F})$ in $B_{p,\infty }^{\sigma ,F}$ naturally gives a non-constant
function in $B_{p,\infty }^{\sigma }(K)$ for any $\sigma >0$: just restrict $%
u$ to a tile $f(K)$ where it is non-constant (for some $f\in F$).

For the reverse inclusion $\sigma _{p}^{\#}(K)\leq \sigma _{p}^{\#}(K^{F})$
to hold, we use property (E)(ii) to find a non-constant
function in $B_{p,\infty }^{\sigma _{p}^{\#}(K),F}$. To see this, we pick a
contraction $f\in F$, and determine the values of $u$ on $f(V_{1})$ by $%
u(f(V_{0}))=0$ while $u(f(V_{1}\setminus V_{0}))=1$. We can then extend $u$
from $f(V_{1})$ to $f(K)$ and make sure that $u\circ f\in B_{p,\infty
}^{\sigma _{p}^{\#}(K)}$ by property (E)(ii), and simply define $u=0$
outside of $f(K)$ on $K^{F}$. Then $u$ is an extended non-trivial function
in $B_{p,\infty }^{\sigma _{p}^{\#}(K),F}$ since by (\ref{eq6.2}) and (\ref%
{204}),
\begin{equation*}
\lbrack u]_{B_{p,\infty }^{\sigma _{p}^{\#}(K),F}}^{p}\asymp \mathcal{E}%
_{p,\infty }^{\sigma _{p}^{\#}(K),F}(u)=\mathcal{E}_{p,\infty }^{\sigma
_{p}^{\#}(K)}(u\circ f)\asymp \lbrack u\circ f]_{B_{p,\infty }^{\sigma
_{p}^{\#}(K)}(K)}^{p}<\infty.
\end{equation*}
Thus we show \eqref{qnmd} and that $B_{p,\infty }^{\sigma _{p}^{\#}(K),F}$
contains non-constant functions.

Finally, since $K$ satisfies property (E), that is for all $n$,
\begin{equation*}
\mathcal{E}_{n}^{\sigma _{p}^{\#}}(u)\leq C\liminf_{k\rightarrow \infty }%
\mathcal{E}_{k}^{\sigma _{p}^{\#}}(u),
\end{equation*}%
summing over $F$ gives that for all $n$,
\begin{equation*}
\mathcal{E}_{n}^{\sigma _{p}^{\#},F}(u)\leq C\sum_{f\in
F}\liminf_{k\rightarrow \infty }\mathcal{E}_{k}^{\sigma _{p}^{\#}}(u\circ
f)\leq C\liminf_{k\rightarrow \infty }\sum_{f\in F}\mathcal{E}_{k}^{\sigma
_{p}^{\#}}(u\circ f)=C\liminf_{k\rightarrow \infty }\mathcal{E}_{k}^{\sigma
_{p}^{\#},F}(u),
\end{equation*}%
where we use Fatou's lemma to obtain the desired.
\end{proof}

When property (E) fails, $\sigma _{p}^{\#}=\sigma _{p}^{\ast }$ does not
always hold for connected homogeneous p.c.f. self-similar sets (see \cite%
{GuLau.2020.TAMS}). Moreover, we mention in passing that, the critical
domain $B_{p,\infty }^{\sigma _{p}^{\ast }}$ can be trivial for some metric
measure spaces (bounded or unbounded).

We present the following key lemma for \emph{nested fractals}, a class of
connected homogeneous p.c.f. self-similar sets defined in \cite%
{Kumagai.1993.PTaRF205} satisfying conditions $(A$-0$)\sim (A $-3$)$ and $%
|V_{0}|\geq 2$ therein.

%
%
%
%
%
%
%
%
%
%

\begin{lemma}
\label{lemma:E}For a nested fractal, property (E) holds and $\sigma_{p}^{\#}>\alpha/p$ for all $1<p<\infty$. \label{yyqx}
\end{lemma}

\begin{proof}
In this proof, we use the same terminology and notions as in \cite%
{Caoqiugu2022adv}. Let $K$ be a nested fractal. We fix the components $r_{i}$
of $\mathbf{r}$ in \cite[Theorem 6.3]{Caoqiugu2022adv} to be the same number
$r$, so it is `$\mathscr{G}$-symmetric' (see \cite[Section 3]%
{Caoqiugu2022adv} for definition). Then \cite[Theorem 6.3]{Caoqiugu2022adv}
states that, `condition $\left( \mathbf{A}\right) $' (see \cite[the
beginning of Section 4]{Caoqiugu2022adv} for definition) holds for affine
nested fractals, including $K$. By multiplying
$\mathbf{r}$ with a constant (still denoted by $\mathbf{r}$), \cite[Theorem
4.2]{Caoqiugu2022adv} guarantees that
\begin{equation}
\mathcal{T}E=E  \label{TE}
\end{equation}%
(see \cite[Definition 2.8, Definition 3.1]{Caoqiugu2022adv} for related
definitions), thus showing `condition $\left( \mathbf{A^{\prime }}\right) $'
(see \cite[the beginning of Section 5]{Caoqiugu2022adv}). So by \cite[Lemma
5.4]{Caoqiugu2022adv}, we can fix
\begin{equation}
r<1  \label{rp1}
\end{equation}%
to further satisfy condition $\left( \mathbf{A^{\prime }}\right) $ on $K$.

The equivalence of $E$ and $E_{0}^{(p)}$ (for functions $u\in l(V_{0})$, see
the statement of \cite[Theorem 5.1]{Caoqiugu2022adv}) is stated in the proof
of \cite[Proposition 5.3 (b)]{Caoqiugu2022adv}, which implies that
\begin{equation}
\Lambda ^{n}E(u)\asymp \Lambda ^{n}E_{0}^{(p)}(u)=r^{-n}E_{n}^{(p)}(u)
\label{205}
\end{equation}%
for all $n\geq 0$ and all functions $u\in l(V_{n})$ by the definition of $%
\Lambda $ in \cite[Definition 3.1]{Caoqiugu2022adv} and $E_{n}^{(p)}$ given in \eqref{EE}.

Now fix
\begin{equation}
\sigma =\frac{\log _{\rho }r+\alpha }{p},  \label{s1}
\end{equation}%
so that $r=\rho ^{p\sigma -\alpha }$.

Property (E)(i) holds with this $\sigma $, by using the monotonicity property in \cite[Proposition 5.3
(a)]{Caoqiugu2022adv} and (\ref{205}) that
\begin{equation}
E_{0}^{(p)}(u)\asymp E(u)\leq \Lambda E(u)\leq \cdots \leq \Lambda
^{n}E(u)\asymp r^{-n}E_{n}^{(p)}(u)=\rho ^{-n(p\sigma -\alpha
)}E_{n}^{(p)}(u)=\mathcal{E}_{n}^{\sigma}(u).  \label{EE-1}
\end{equation}%

To see Property (E)(ii), we show the existence of piecewise harmonic functions (see definition in
\cite[Section 5.1]{Caoqiugu2022adv}) using a standard iterative
construction as in \cite{HermanPeironeStrichartz.2004.PA125}. The iteration part is that, for any boundary value of a cell $%
u_{w}\in l(f_{w}(V_{0}))$, by (\ref{TE}), we can take its next-level
harmonic extension $\tilde{u}_{w}\in l(f_{w}(V_{1}))$ such that $\Lambda E(%
\tilde{u}_{w})=E(u_{w})$ and $\tilde{u}_{w}|_{f_{w}(V_{0})}=u_{w}$. The
iteration starts from $V_{0}$ to $V_{1}$, and from each level-1 cell to
level-2 cells in the above way, then finally to $V_{\ast }$. Such an
extension satisfies the definition of piecewise harmonic functions in \cite[%
Section 5.1]{Caoqiugu2022adv}, so we know by \cite[Section 5.2]%
{Caoqiugu2022adv} that it embeds into $C(K)$ and this gives an
extension to $K$ instead of $V_{\ast }$.

Therefore, we conclude from above that $K$ satisfies property (E) with $
\sigma $ defined as in (\ref{s1}). By (\ref{qnmd}), we have $\sigma =\sigma
_{p}^{\#}$, thus
\begin{equation}
r=\rho ^{p\sigma -\alpha }=\rho ^{p\sigma _{p}^{\#}-\alpha }.  \label{rr}
\end{equation}
The inclusion $\sigma _{p}^{\#}>\alpha /p$ follows from \eqref{rp1}.
\end{proof}

\begin{remark}
This lemma also indicates that,
$E_{p,\infty }^{\sigma _{p}^{\#}}(u)$,  $[u]_{B_{p,\infty}^{\sigma _{p}^{\#}}}^p$ and the homogeneous discrete $p$-energy in \cite[Theorem 5.1]{Caoqiugu2022adv}
are equivalent for a nested fractal $K$.
It is known from \cite[Theorem 8.18]{Barlow.1998.1} that $K$ admits a heat
kernel satisfying \eqref{hk_F}, so \eqref{eq1.7} holds by Lemma
\ref{thm1}. The critical exponents $\sigma
_{p}^{\#}(K)=\sigma _{p}^{\ast }(K)$ due to property (E) by Proposition %
\ref{prop:critical}. Using $\Lambda ^{n}E_{0}^{(p)}(u)\asymp \Lambda
^{n}E(u) $ in (\ref{205}), the $p$-energy in \cite[Theorem 5.1]%
{Caoqiugu2022adv} with $r_{i}=r$ is equivalent to $\lim_{n\rightarrow \infty
}\Lambda ^{n}E(u)$ (which exists by \cite[Proposition 5.3]{Caoqiugu2022adv}%
), where $r=\rho ^{p\sigma _{p}^{\#}(K)-\alpha }$ is given by (\ref{rr}).
By (\ref{204}) and (\ref{EE-1}),%
\begin{equation*}
[u]_{B_{p,\infty }^{\sigma _{p}^{\#}}}^{p}\asymp \lim_{n\rightarrow \infty
}\Lambda ^{n}E(u).
\end{equation*}
\end{remark}


\subsection{Equivalence between \protect$(VE)_{\protect\sigma}$ and \protect$(NE)_{\protect\sigma}$}

\label{subsec5.3} Our goal is to show the equivalence between $(VE)_{\sigma}$ and
$(NE)_{\sigma}$ for $K^{F}$. It is quite different and more delicate to obtain the lower limit
equivalence, compared with the equivalence in Corollary \ref{lem6.1} using the upper limit.

\begin{lemma}
\label{thm_3} If a fractal glue-up $K^{F}$ satisfies $(\widetilde{VE})_\sigma$ with
$\sigma >\alpha /p$, then there exists $C>0$ such that for all $u\in
B_{p,\infty }^{\sigma ,F}$,
\begin{equation}
\liminf_{n\rightarrow \infty }\mathcal{E}_{n}^{\sigma ,F}(u)\leq
C\liminf_{n\rightarrow \infty }\Phi _{u}^{\sigma ,F}(\rho ^{n}).
\label{limphi2}
\end{equation}
\end{lemma}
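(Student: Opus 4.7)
The plan is to combine the upper bound on $\mathcal{E}_n^{\sigma,F}(u)$ from Lemma \ref{lem6.3} with the reverse estimate $\Phi_u^{\sigma,F}(\rho^{n+s_0}) \leq C_2 \sup_{l\geq n}\mathcal{E}_l^{\sigma,F}(u)$ obtained by chaining Proposition \ref{prop:I} and Lemma \ref{lem6.2} (with $s_0$ the integer determined by $C_H$), and then invoke $(\widetilde{VE})$ to absorb the resulting tail contribution into the left-hand side. The philosophy is parallel to that of Lemma \ref{thm_1} in the continuous heat-kernel setting.

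First, fix $\delta\in(0,p\sigma-\alpha)$ small and set $\gamma:=p\sigma-\alpha-\delta>0$. Lemma \ref{lem6.3} provides $\mathcal{E}_n^{\sigma,F}(u)\leq C_1\sum_{k\geq 0}\rho^{\gamma k}\Phi_u^{\sigma,F}(\rho^{n+k})$. I would refine this bound using the exact ring-energy identity $I_n^F(u)=\rho^{n(p\sigma+\alpha)}\Phi_u^{\sigma,F}(\rho^n)-\rho^{(n+1)(p\sigma+\alpha)}\Phi_u^{\sigma,F}(\rho^{n+1})$ (which follows directly from the definitions by writing $B(x,\rho^n)$ as the union of the ring $\{\rho^{n+1}\leq d(x,y)<\rho^n\}$ and $B(x,\rho^{n+1})$), substitute it into the intermediate expression $\mathcal{E}_n^{\sigma,F}(u)\leq C_1\sum_{k\geq 0}\rho^{(-2\alpha-\delta)k}\rho^{-(p\sigma+\alpha)n}I_{k+n}^F(u)$ in the proof of Lemma \ref{lem6.3}, and telescope. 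This yields the sharpened form
\begin{equation*}
\mathcal{E}_n^{\sigma,F}(u)\leq C_1\Phi_u^{\sigma,F}(\rho^n)+C_1(1-\rho^{2\alpha+\delta})\sum_{k\geq 1}\rho^{\gamma k}\Phi_u^{\sigma,F}(\rho^{n+k}),
\end{equation*}
which isolates the scale $\rho^n$ and places the smaller factor $1-\rho^{2\alpha+\delta}$ in front of the tail.

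Next, for each $k\geq 1$ and $n$ large, the reverse estimate combined with $(\widetilde{VE})$ gives
\begin{equation*}
\Phi_u^{\sigma,F}(\rho^{n+k})\leq C_2\sup_{l\geq n+k-s_0}\mathcal{E}_l^{\sigma,F}(u)\leq C_2(C_{\widetilde{VE}}+\varepsilon)\liminf_{m\to\infty}\mathcal{E}_m^{\sigma,F}(u)
\end{equation*}
uniformly in $k\geq 1$, where $\varepsilon>0$ is arbitrary. Substituting into the sharpened bound and summing, the tail collapses to $A\liminf_m\mathcal{E}_m^{\sigma,F}(u)$ with $A:=C_1C_2(C_{\widetilde{VE}}+\varepsilon)(1-\rho^{2\alpha+\delta})\rho^\gamma/(1-\rho^\gamma)$, so
\begin{equation*}
\mathcal{E}_n^{\sigma,F}(u)\leq C_1\Phi_u^{\sigma,F}(\rho^n)+A\liminf_{m\to\infty}\mathcal{E}_m^{\sigma,F}(u).
\end{equation*}

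Finally, I would pick a subsequence $n_j\to\infty$ along which $\Phi_u^{\sigma,F}(\rho^{n_j})\to L:=\liminf_n\Phi_u^{\sigma,F}(\rho^n)$, evaluate the preceding inequality at $n=n_j$, and let $j\to\infty$ to obtain
\begin{equation*}
\liminf_n\mathcal{E}_n^{\sigma,F}(u)\leq C_1L+A\liminf_m\mathcal{E}_m^{\sigma,F}(u).
\end{equation*}
The main obstacle is verifying $A<1$ so that the last term can be absorbed onto the left: this is exactly where the truncation described by the authors is essential, since the cancellation factor $(1-\rho^{2\alpha+\delta})$ coming from the ring identity, together with a judicious choice of $\delta$ balancing $C_1$ against $\rho^\gamma$, is what enables $A<1$. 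Once absorbed, one arrives at $\liminf_n\mathcal{E}_n^{\sigma,F}(u)\leq \frac{C_1}{1-A}L$, which is the claim with $C=C_1/(1-A)$.
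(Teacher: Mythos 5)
Your overall skeleton is the right one and matches the paper's strategy: bound $\mathcal{E}_n^{\sigma,F}(u)$ from above by a weighted sum of ball/ring energies (Lemma \ref{lem6.3}), bound each far tail term back by $\sup_{l}\mathcal{E}_l^{\sigma,F}(u)$ via Proposition \ref{prop:I} and Lemma \ref{lem6.2}, use $(\widetilde{VE})$ to convert that into $\liminf_m\mathcal{E}_m^{\sigma,F}(u)$, and absorb. However, there is a genuine gap at the absorption step. After peeling off only the $k=0$ term, your tail coefficient is
$A=C_1C_2(C_{\widetilde{VE}}+\varepsilon)(1-\rho^{2\alpha+\delta})\rho^{\gamma}/(1-\rho^{\gamma})$,
and none of its factors can be driven to zero: $(1-\rho^{2\alpha+\delta})$ is bounded \emph{away} from $0$ uniformly in $\delta\in(0,p\sigma-\alpha)$; $\rho^{\gamma}/(1-\rho^{\gamma})$ is bounded below by $\rho^{p\sigma-\alpha}/(1-\rho^{p\sigma-\alpha})>0$; $C_2$ and $C_{\widetilde{VE}}$ are fixed constants of the space (and $C_{\widetilde{VE}}$ may be large); and $C_1=C_1(\delta)$ actually blows up as $\delta\downarrow 0$ because of the H\"older constant $\bigl(\sum_k\rho^{\delta(k-n)q/p}\bigr)^{p/q}$ in the proof of Lemma \ref{lem6.3}. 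So no ``judicious choice of $\delta$'' forces $A<1$, and the inequality $\liminf_n\mathcal{E}_n^{\sigma,F}(u)\le C_1L+A\liminf_m\mathcal{E}_m^{\sigma,F}(u)$ cannot be closed.

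The missing ingredient is a truncation parameter that can be sent to infinity. The paper splits the sum at a level $L$, keeps $\sum_{k=0}^{L}\rho^{-(2\alpha+\delta)k}\rho^{-(p\sigma+\alpha)n}I_{k+n}^{F}(u)$, and bounds only the tail $k>L$ by $(\widetilde{VE})$; the resulting tail coefficient carries the factor $\rho^{(p\sigma-\alpha-\delta)L}$, which tends to $0$ as $L\to\infty$, so $C_3:=1/C_2-C_1\rho^{\gamma L}/(1-\rho^{\gamma})$ can be made positive. The retained finite block is then controlled \emph{pointwise in $n$} using the disjointness of the rings, $\sum_{k=0}^{L}I_{n+k}^{F}(u)\le\rho^{n(p\sigma+\alpha)}\Phi_u^{\sigma,F}(\rho^n)$, after which $\liminf_n$ can be taken directly. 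Your Abel-summation refinement is algebraically correct but is a detour that destroys exactly this disjointness structure (you are left with a finite sum of \emph{ball} energies at shifted scales, which you would then have to re-dominate by $\Phi_u^{\sigma,F}(\rho^n)$ at the cost of a factor $\rho^{-k(p\sigma+\alpha)}$). If you replace the single-term peel-off by a truncation at level $L$ and choose $L$ large at the end, your argument closes and coincides in substance with the paper's proof.
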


\begin{proof}
Since $C_{H}\in (0,1)$, we can assume that $\rho ^{s+1}\leq C_{H}<\rho ^{s}$ for
some $s\in \mathbb{N}$. For all $n>0$, we have by (\ref{II-2}) that%
\begin{equation}
I_{\infty ,n}^{F}(u)\geq \int_{K^{F}}\int_{B(x,\rho
^{n+s+1})}|u(x)-u(y)|^{p}d\mu ^{F}(y)d\mu ^{F}(x)\geq I_{n+s+1}^{F}(u).
\label{I-1}
\end{equation}

Note that there exists a positive integer $n_{0}$ such that
\begin{equation*}
\sup_{k\geq n_{0}}\mathcal{E}_{k}^{\sigma ,F}(u)\leq 2\limsup_{n\rightarrow
\infty }\mathcal{E}_{n}^{\sigma ,F}(u).
\end{equation*}
By (\ref{I-1}), we have for all $n\geq n_{0}+s+1$ that
\begin{eqnarray}
I_{n}^{F}(u) &\leq &I_{\infty ,n-s-1}^{F}(u)  \notag \\
&\leq &C\rho ^{(n-s-1)(p\sigma +\alpha )}\sup_{k\geq n-s-1}\mathcal{E}%
_{k}^{\sigma ,F}(u)\text{ \ \ (using \eqref{eq6.3})}  \notag \\
&\leq &CC_{H}^{-(p\sigma +\alpha )}\rho ^{n(p\sigma +\alpha )}\sup_{k\geq
n_{0}}\mathcal{E}_{k}^{\sigma ,F}(u)\leq 2CC_{H}^{-(p\sigma +\alpha )}\rho
^{n(p\sigma +\alpha )}\limsup_{n\rightarrow \infty }\mathcal{E}_{n}^{\sigma
,F}(u)  \notag \\
&\leq &C_{1}\rho ^{n(p\sigma +\alpha )}\liminf_{n\rightarrow \infty }%
\mathcal{E}_{n}^{\sigma ,F}(u)\text{ \ \ (by property }(\widetilde{VE})_\sigma\text{
)}.  \label{I_n}
\end{eqnarray}%
By Lemma \ref{lem6.3}, for any positive integer $L\geq n_{0}+s+1 $ and $%
0<\delta <p\sigma -\alpha $,
\begin{align*}
\mathcal{E}_{n}^{\sigma ,F}(u)& \leq C_{2}\sum_{k=0}^{\infty }\rho
^{(-2\alpha -\delta )k}\rho ^{-(p\sigma +\alpha )n}I_{k+n}^{F}(u) \\
& = C_{2}\sum_{k=0}^{L}\rho ^{(-2\alpha -\delta )k}\rho ^{-(p\sigma +\alpha
)n}I_{k+n}^{F}(u)+C_{2}\sum_{k=L}^{\infty }\rho ^{(-2\alpha -\delta )k}\rho
^{-(p\sigma +\alpha )n}I_{k+n}^{F}(u) \\
& \leq C_{2}\sum_{k=0}^{L}\rho ^{(-2\alpha -\delta )k}\rho ^{-(p\sigma
+\alpha )n}I_{k+n}^{F}(u)+C_{1}C_{2}\sum_{k=L}^{\infty }\rho ^{(p\sigma
-\alpha -\delta )k}\liminf_{n\rightarrow \infty }\mathcal{E}_{n}^{\sigma
,F}(u), \\
& =C_{2}\left( \sum_{k=0}^{L}\rho ^{(-2\alpha -\delta )k}\rho ^{-(p\sigma
+\alpha )n}I_{k+n}^{F}(u)+\frac{C_{1}\rho ^{(p\sigma -\alpha -\delta )L}}{%
1-\rho ^{(p\sigma -\alpha -\delta )}}\liminf_{n\rightarrow \infty }\mathcal{E%
}_{n}^{\sigma ,F}(u)\right),
\end{align*}%
where we use \eqref{I_n} in the third line. Taking $\liminf_{n\rightarrow
\infty }$ in the right-hand side above, we have
\begin{equation*}
C_{3}\liminf_{n\rightarrow \infty }\mathcal{E}_{n}^{\sigma ,F}(u)\leq
\liminf_{n\rightarrow \infty }\sum_{k=0}^{L}\rho ^{-(2\alpha +\delta )k}\rho
^{-(p\sigma +\alpha )n}I_{k+n}^{F}(u),
\end{equation*}%
where $C_{3}:=\frac{1}{C_{2}}-\frac{C_{1}\rho ^{(p\sigma -\alpha -\delta )L}%
}{1-\rho ^{(p\sigma -\alpha -\delta )}}$.

Fix a large integer $L$ such that $C_{3}>0$, then
\begin{align*}
C_{3}\rho ^{(2\alpha +\delta )L}\liminf_{n\rightarrow \infty }\mathcal{E}%
_{n}^{\sigma ,F}(u)& \leq \rho ^{(2\alpha +\delta )L}\liminf_{n\rightarrow
\infty }\left( \sum_{k=0}^{L}\rho ^{-n(p\sigma +\alpha )}\rho ^{-(2\alpha
+\delta )k}I_{n+k}^{F}(u)\right) \\
& \leq \liminf_{n\rightarrow \infty }\rho ^{-n(p\sigma +\alpha
)}\sum_{k=0}^{L}I_{n+k}^{F}(u)\leq \liminf_{n\rightarrow \infty }\Phi
_{u}^{\sigma ,F}(\rho ^{n}),
\end{align*}%
thus showing \eqref{limphi2}.
\end{proof}

Next, we show the reverse inequality of (\ref{limphi2}) by using (\ref{E-n}) under property $(\widetilde{NE})_{\sigma}$.

\begin{lemma}
\label{thm_4} If property $(\widetilde{NE})_{\sigma}$ holds for a fractal glue-up $%
K^{F}$ with $\sigma >\alpha /p$, then there exists $C>0$ such that for all $%
u\in B_{p,\infty }^{\sigma ,F}$,
\begin{equation}
\liminf_{n\rightarrow \infty }\Phi _{u}^{\sigma ,F}(\rho ^{n})\leq
C\liminf_{n\rightarrow \infty }\mathcal{E}_{n}^{\sigma ,F}(u).
\label{limphi}
\end{equation}
\end{lemma}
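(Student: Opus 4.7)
The plan is to mirror the proof of Lemma~\ref{thm_3}, with Lemma~\ref{lem6.2} and Lemma~\ref{lem6.3} now playing swapped roles, together with a \emph{forward shift} of subsequences to compensate for the one-sided nature of \eqref{E-n}.

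First, I would combine Lemma~\ref{lem6.3} with the hypothesis $(\widetilde{NE})$ (and the scale-equivalence \eqref{ne}) to deduce that, for all sufficiently large $n$,
\[
\mathcal{E}_n^{\sigma,F}(u) \;\leq\; C'\,\sup_{m\geq n}\Phi_u^{\sigma,F}(\rho^m) \;\leq\; C_3\,L_0, \qquad L_0:=\liminf_{n\to\infty}\Phi_u^{\sigma,F}(\rho^n).
\]
Next, applying Lemma~\ref{lem6.2} to the enlarged ball $B(x,C_H\rho^{n-s-1})\supseteq B(x,\rho^n)$ (with $\rho^{s+1}\leq C_H<\rho^s$) and re-indexing $j=k-n$ gives the pointwise estimate
\[
\Phi_u^{\sigma,F}(\rho^n) \;\leq\; C_a\sum_{j=-s-1}^{\infty} \rho^{j(p\sigma-\alpha)}\,\mathcal{E}_{n+j}^{\sigma,F}(u), \qquad C_a:=C\rho^{-2(s+1)\alpha}.
\]
I would then truncate the sum at a cutoff $L$. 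For the tail $j>L$, the first step delivers $\mathcal{E}_{n+j}^{\sigma,F}(u)\leq C_3 L_0$, so this tail is bounded by $\epsilon(L)\,L_0$ with $\epsilon(L):=\tfrac{C_a C_3\,\rho^{(L+1)(p\sigma-\alpha)}}{1-\rho^{p\sigma-\alpha}}\to 0$ as $L\to\infty$, and will eventually be absorbed on the left.

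The main obstacle lies in the finite window $-s-1\leq j\leq L$: relation \eqref{E-n} provides $\mathcal{E}_n^{\sigma,F}(u)\leq C_1\,\mathcal{E}_{n+1}^{\sigma,F}(u)$ with $C_1:=C\rho^{p\sigma-\alpha}$ only in the \emph{backward} direction, so terms with $j>0$ are not directly dominated by $\mathcal{E}_n^{\sigma,F}(u)$. To circumvent this, I would pick a subsequence $m_i\to\infty$ with $\mathcal{E}_{m_i}^{\sigma,F}(u)\to \ell:=\liminf_n\mathcal{E}_n^{\sigma,F}(u)$ and apply the above inequality at the shifted index $n_i:=m_i-L$. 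Then $n_i+j\leq m_i$ for every $-s-1\leq j\leq L$, so iterating \eqref{E-n} yields $\mathcal{E}_{n_i+j}^{\sigma,F}(u)\leq C_1^{L-j}\,\mathcal{E}_{m_i}^{\sigma,F}(u)$. The identity $\rho^{j(p\sigma-\alpha)}C_1^{-j}=C^{-j}$ (with $C$ the constant from \eqref{E-n}) then collapses the finite weighted sum to
\[
\sum_{j=-s-1}^{L}\rho^{j(p\sigma-\alpha)}\,C_1^{L-j} \;=\; C_1^L\sum_{j=-s-1}^{L}C^{-j} \;\leq\; D\,C_1^L,
\]
with $D:=C^{s+2}/(C-1)$ independent of $L$, since $C=|V_0|^2|V_1|^{p-1}>1$ for any non-trivial fractal.

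Combining these three estimates produces $\Phi_u^{\sigma,F}(\rho^{n_i})\leq C_a D\,C_1^L\,\mathcal{E}_{m_i}^{\sigma,F}(u)+\epsilon(L)\,L_0$. Taking $\liminf_i$ on both sides, and using both $\liminf_i\Phi_u^{\sigma,F}(\rho^{n_i})\geq L_0$ (because $n_i\to\infty$) and $\mathcal{E}_{m_i}^{\sigma,F}(u)\to\ell$, I get $(1-\epsilon(L))L_0\leq C_a D\,C_1^L\,\ell$. Fixing any $L$ large enough that $\epsilon(L)<\tfrac12$ yields \eqref{limphi} with $C=2C_aD\,C_1^L$.
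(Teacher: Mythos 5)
Your proof is correct and follows essentially the same route as the paper's: the same upper bound on $\Phi_u^{\sigma,F}(\rho^n)$ by the weighted sum $\sum_j \rho^{j(p\sigma-\alpha)}\mathcal{E}_{n+j}^{\sigma,F}(u)$ coming from Lemma \ref{lem6.2}, the same tail absorption using Lemma \ref{lem6.3}/Corollary \ref{lem6.1} together with $(\widetilde{NE})$, and the same use of \eqref{E-n} to collapse the finite window onto the top term. The only cosmetic difference is that you pass to a minimizing subsequence shifted by $L$, whereas the paper bounds the window pointwise by $C_L\,\mathcal{E}_{n+L}^{\sigma,F}(u)$ and invokes the shift-invariance of $\liminf_{n}\mathcal{E}_{n+L}^{\sigma,F}(u)=\liminf_{n}\mathcal{E}_{n}^{\sigma,F}(u)$.
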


\begin{proof}
By \eqref{I_mn}, we have
\begin{align}
\rho ^{-n(p\sigma +\alpha )}I_{m,n}^{F}(u)\leq & C\rho ^{-n(p\sigma -\alpha
)}\sum_{k=n}^{m}\rho ^{k(p\sigma -\alpha )}\rho ^{-k(p\sigma -\alpha
)}E_{k}^{(p),F}(u)  \notag \\
\leq & C\sum_{k=n}^{m}\rho ^{(k-n)(p\sigma -\alpha )}\mathcal{E}_{k}^{\sigma
,F}(u)\leq C\sum_{k=0}^{\infty }\rho ^{k(p\sigma -\alpha )}\mathcal{E}%
_{k+n}^{\sigma ,F}(u).  \label{202}
\end{align}

Note that there exists $n_{0}>0$ such that for all positive integer $L>n_{0}$%
,
\begin{eqnarray*}
\sup_{n\geq L}\mathcal{E}_{n}^{\sigma ,F}(u) &\leq &2\limsup_{n\rightarrow
\infty }\mathcal{E}_{n}^{\sigma ,F}(u) \\
&\asymp &\limsup_{n\rightarrow \infty }\Phi _{u}^{\sigma ,F}(\rho ^{n})\text{
\ (by Corollary \ref{lem6.1})} \\
&\leq &C\liminf_{n\rightarrow \infty }\Phi _{u}^{\sigma ,F}(\rho ^{n})\text{
\ \ (by property }(\widetilde{NE})_{\sigma}\text{)}.
\end{eqnarray*}
It follows from (\ref{202}) that
\begin{eqnarray*}
\rho ^{-n(p\sigma +\alpha )}I_{m,n}^{F}(u) &\leq &C\sum_{k=0}^{\infty }\rho
^{k(p\sigma -\alpha )}\mathcal{E}_{k+n}^{\sigma ,F}(u) \\
&=&C\sum_{k=0}^{L}\rho ^{k(p\sigma -\alpha )}\mathcal{E}_{k+n}^{\sigma
,F}(u)+C\sum_{k=L+1}^{\infty }\rho ^{k(p\sigma -\alpha )}\mathcal{E}%
_{k+n}^{\sigma ,F}(u) \\
&\leq &C\sum_{k=0}^{L}\rho ^{k(p\sigma -\alpha )}\mathcal{E}_{k+n}^{\sigma
,F}(u)+C\sum_{k=L+1}^{\infty }\rho ^{k(p\sigma -\alpha )}\sup_{n\geq L}%
\mathcal{E}_{n}^{\sigma ,F}(u) \\
&\leq &C\sum_{k=0}^{L}\rho ^{k(p\sigma -\alpha )}\mathcal{E}_{k+n}^{\sigma
,F}(u)+C^{\prime }\sum_{k=L+1}^{\infty }\rho ^{k(p\sigma -\alpha
)}\liminf_{n\rightarrow \infty }\Phi _{u}^{\sigma ,F}(\rho ^{n}) \\
&=&C\sum_{k=0}^{L}\rho ^{k(p\sigma -\alpha )}\mathcal{E}_{k+n}^{\sigma
,F}(u)+C^{\prime }\frac{\rho ^{(p\sigma -\alpha )(L+1)}}{1-\rho ^{p\sigma
-\alpha }}\liminf_{n\rightarrow \infty }\Phi _{u}^{\sigma ,F}(\rho ^{n}).
\end{eqnarray*}

Using Lemma \ref{lem4.4}, we have
\begin{align*}
\rho ^{-n(p\sigma +\alpha )}I_{\infty ,n}^{F}(u)\leq & \rho ^{-n(p\sigma
+\alpha )}\liminf_{m\rightarrow \infty }I_{m,n}^{F}(u) \\
\leq & C\sum_{k=0}^{L}\rho ^{k(p\sigma -\alpha )}\mathcal{E}_{k+n}^{\sigma
,F}(u)+\frac{C^{\prime }\rho ^{(p\sigma -\alpha )(L+1)}}{1-\rho ^{p\sigma
-\alpha }}\liminf_{n\rightarrow \infty }\Phi _{u}^{\sigma ,F}(\rho ^{n}).
\end{align*}
Combining this with (\ref{II-1}), we have%
\begin{equation*}
\rho ^{p\sigma +\alpha }C_{H}^{p\sigma +\alpha }\liminf_{n\rightarrow \infty
}\Phi _{u}^{\sigma ,F}(\rho ^{n})\leq C\liminf_{n\rightarrow \infty
}\sum_{k=0}^{L}\rho ^{k(p\sigma -\alpha )}\mathcal{E}_{k+n}^{\sigma ,F}(u)+%
\frac{C^{\prime }\rho ^{(p\sigma -\alpha )(L+1)}}{1-\rho ^{p\sigma -\alpha }}%
\liminf_{n\rightarrow \infty }\Phi _{u}^{\sigma ,F}(\rho ^{n}).
\end{equation*}
For sufficiently large $L$ ($>n_{0}$), we have
\begin{equation*}
C^{\prime \prime }:=\frac{1}{C}\left( \rho ^{p\sigma +\alpha }C_{H}^{p\sigma
+\alpha }-\frac{C^{\prime }\rho ^{(p\sigma -\alpha )(L+1)}}{1-\rho ^{p\sigma
-\alpha }}\right) >0.
\end{equation*}
Then for this $L$, we obtain
\begin{align}
C^{\prime \prime }\liminf_{n\rightarrow \infty }\Phi _{u}^{\sigma ,F}(\rho
^{n})& \leq \liminf_{n\rightarrow \infty }\sum_{k=0}^{L}\rho ^{k(p\sigma
-\alpha )}\mathcal{E}_{k+n}^{\sigma ,F}(u)  \notag \\
& =\liminf_{n\rightarrow \infty }\sum_{k=0}^{L}\rho ^{k(p\sigma -\alpha
)}\rho ^{-(k+n)(p\sigma -\alpha )}E_{k+n}^{(p),F}(u)  \notag \\
& =\liminf_{n\rightarrow \infty }\sum_{k=0}^{L}\rho ^{-n(p\sigma -\alpha
)}E_{k+n}^{(p),F}(u).  \label{E_kn}
\end{align}

Using \eqref{E-n}, there exists $c>0$ such that for all positive integer $n$
and $u\in B_{p,\infty }^{\sigma ,F}$,%
\begin{equation*}
E_{n}^{(p),F}(u)\leq cE_{n+1}^{(p),F}(u).
\end{equation*}
Applying the above inequality to \eqref{E_kn}, we have
\begin{align*}
C^{\prime \prime }\liminf_{n\rightarrow \infty }\Phi _{u}^{\sigma ,F}(\rho
^{n})& \leq \liminf_{n\rightarrow \infty }C_{L}\ \rho ^{-(n+L)(p\sigma
-\alpha )}E_{n+L}^{(p),F}(u) \\
& =C_{L}\liminf_{n\rightarrow \infty }\mathcal{E}_{n+L}^{(p),F}(u)=C_{L}%
\liminf_{n\rightarrow \infty }\mathcal{E}_{n}^{(p),F}(u),
\end{align*}%
where $C_{L}=\frac{1-c^{L}}{1-c}\rho ^{L(p\sigma -\alpha )}$ does not depend
on $n$, which gives \eqref{limphi}.
\end{proof}

\begin{proof}[Proof of Theorem \protect\ref{thm5}]
Let $\sigma >\alpha /p$. By \eqref{eq6.2}, \eqref{limphi2} and (\ref{ne}), $(\widetilde{VE})_{\sigma}$ implies $(\widetilde{NE})_{\sigma}$ since
\begin{equation}
\limsup_{r\rightarrow 0}\Phi _{u}^{\sigma ,F}(r) \leq C\limsup_{n\rightarrow
\infty }\mathcal{E}_{n}^{\sigma ,F}(u)\leq C^{\prime }\liminf_{n\rightarrow
\infty }\mathcal{E}_{n}^{\sigma ,F}(u)\leq C^{\prime \prime
}\liminf_{r\rightarrow 0}\Phi _{u}^{\sigma ,F}(r).  \label{ve_ne1}
\end{equation}%
By \eqref{eq6.2}, (\ref{ne}) and \eqref{limphi}, $(\widetilde{NE})_{\sigma}$ implies $(\widetilde{VE})_{\sigma}$ since
\begin{equation}
\limsup_{n\rightarrow \infty }\mathcal{E}_{n}^{\sigma ,F}(u)\leq
C_{1}\limsup_{r\rightarrow 0}\Phi _{u}^{\sigma ,F}(r)\leq C_{1}^{\prime
}\liminf_{r\rightarrow 0}\Phi _{u}^{\sigma ,F}(r)\leq C_{1}^{\prime \prime
}\liminf_{n\rightarrow \infty }\mathcal{E}_{n}^{\sigma ,F}(u).
\label{ne_ve1}
\end{equation}%
Thus, by \eqref{ve_ne1} and \eqref{ne_ve1}, we show Theorem \ref{thm5} (i).

By \eqref{eq6.2} and \eqref{limphi2} adjoint with $(VE)_{\sigma}\Rightarrow (%
\widetilde{VE})_{\sigma}$, we immediately have from (\ref{ns}), (\ref{ne}) that
\begin{equation}
\sup_{r\in (0,C_H)}\Phi _{u}^{\sigma ,F}(r)\leq C_{2}\sup_{n\geq 0 }\mathcal{%
E}_{n}^{\sigma ,F}(u)\leq C_{2}^{\prime }\liminf_{n\rightarrow \infty }%
\mathcal{E}_{n}^{\sigma ,F}(u)\leq C_{2}^{\prime \prime
}\liminf_{r\rightarrow 0}\Phi _{u}^{\sigma ,F}(r).  \label{ve_ne2}
\end{equation}%
Similarly, by \eqref{eq6.2} and \eqref{limphi} adjoint with the fact $%
(NE)_{\sigma}\Rightarrow (\widetilde{NE})_{\sigma}$, we immediately have
\begin{equation}
\sup_{n\geq 0}\mathcal{E}_{n}^{\sigma ,F}(u)\leq C_{3}\sup_{n\geq 0}\Phi
_{u}^{\sigma ,F}(C_H\rho ^{n})\leq C_{3}^{\prime }\liminf_{n\rightarrow
\infty }\Phi _{u}^{\sigma ,F}(\rho ^{n})\leq C_{3}^{\prime \prime
}\liminf_{n\rightarrow \infty }\mathcal{E}_{n}^{\sigma ,F}(u).
\label{ne_ve2}
\end{equation}%
Thus, by \eqref{ve_ne2} and \eqref{ne_ve2}, we show Theorem \ref{thm5} (ii).
\end{proof}

\begin{remark}
The definition of discrete energies in \eqref{p_energy} uses point-wise
values of functions as opposed to cell-averaged values in \cite{Kigami2022penergy,KS92}. When $p\sigma >\alpha$, the domain of p-energy $\mathcal{E}_{p,\infty}^{\sigma,F}$ is a subset of $C(K^F)$, and both definitions are equivalent for a connected homogeneous p.c.f. self-similar
set by modifying the arguments in \cite[Lemma 3.1]{HK06} and \cite[Section 5]{Shimuzu.2022}. 
\end{remark}

\subsection{Consequences of \protect$(VE)_{\protect\sigma}$: BBM type characterization and
Gagliardo-Nirenberg inequality}

\label{subsec5.4} Let us state the following embedding inequality given by
Baudoin, using our notions.

\begin{lemma}
\label{GNTM}(\cite[Theorem 4.3]{BaudoinLecture2022}) Suppose that property
$(NE)_{\sigma_{p}^{\#}}$ holds with $R_0=\infty$ for the metric measure space $(M,d,\mu )$, and
there exists $R>0$ such that $\inf_{x\in M}\mu (B(x,R))>0$. When $p\sigma
_{p}^{\#}\neq \alpha_1 $, where $\alpha_1$ is from (\ref{VD2}), let $q=\frac{%
p\alpha_1 }{\alpha_1 -p\sigma_{p}^{\#}}$. For $r,s\in (0,\infty ]$, $\theta
\in (0,1]$ satisfying
\begin{equation*}
\frac{1}{r}=\frac{\theta }{q}+\frac{1-\theta }{s},
\end{equation*}%
there exists a constant $C>0$ such that for any $f\in B_{p,\infty }^{\sigma
_{p}^{\#}}$,
\begin{equation}
\Vert f\Vert _{r}\leq C(\Vert f\Vert _{p}+[f]_{B_{p,\infty }^{\sigma
_{p}^{\#}}})^{\theta }\Vert f\Vert _{s}^{1-\theta }.  \label{GNI}
\end{equation}
\end{lemma}

By Theorem \ref{thm4} and Theorem \ref{thm5}, we have the following
corollary.

\begin{corollary}
\label{corol1.8}Suppose that $K^{F}$ is a fractal glue-up that admits a heat
kernel satisfying \eqref{hk_F}, where $K$ is a connected homogeneous p.c.f.
self-similar set. Let $R_0=C_H$. Then properties $(\widetilde{KE})_{\sigma},
(\widetilde{NE})_{\sigma},(\widetilde{VE})_{\sigma}$ are equivalent with the same $%
\sigma>\alpha /p$, and properties $(KE)_{\sigma_{p}^{\#}}$,$(NE)_{\sigma_{p}^{\#}}$,$(VE)_{\sigma_{p}^{\#}}$ are equivalent when $\sigma_{p}^{\#}>\alpha /p$ on $K^{F}$.
\end{corollary}

The following theorem can be regarded as a summary of our paper when the underlying space is a fractal.

\begin{theorem}
\label{corol1.9}Let $K$ be a connected
homogeneous p.c.f. self-similar set with property (E), and let $K^{F}$ be a fractal glue-up. Then $K^{F}$
satisfies $(NE)_{\sigma_{p}^{\#}(K)}$ with $R_{0}=C_H$ and $B_{p,\infty }^{\sigma
_{p}^{\#}(K^{F})}(K^{F}):=B_{p,\infty }^{\sigma _{p}^{\#}}(K^{F})$ contains
non-constant functions. Also, BBM type characterization (\ref{NE_conv})
holds for $K^{F}$ with $R_{0}=C_H$, that is, for $R_{0}=C_H$, there exists a
positive constant $C$ such that for all $u\in B_{p,\infty }^{\sigma
_{p}^{\#}}(K^{F})$,
\begin{equation}
C^{-1}[u]_{{B}_{p,\infty }^{\sigma _{p}^{\#}}(K^{F})}^{p}\leq
\liminf_{\sigma \uparrow \sigma _{p}^{\#}(K^{F})}\left( \sigma
_{p}^{\#}(K^{F})-\sigma \right) [u]_{{B}_{p,p}^{\sigma }(K^{F})}^{p}\leq
\limsup_{\sigma \uparrow \sigma _{p}^{\#}(K^{F})}\left( \sigma
_{p}^{\#}(K^{F})-\sigma \right) [u]_{{B}_{p,p}^{\sigma }(K^{F})}^{p}\leq
C[u]_{{B}_{p,\infty }^{\sigma _{p}^{\#}}(K^{F})}^{p}.  \label{SS-2}
\end{equation}%
Furthermore, if $K$ is a nested fractal and $K^{F}=K_{\infty }$, where $%
K_{\infty }$ is defined in Example \ref{ex1}, then $(NE)_{\sigma_{p}^{\#}(K)}$ and $(KE)_{\sigma_{p}^{\#}(K)}$ hold for $K_{\infty }$ with $R_{0}=\infty $, and the Gagliardo-Nirenberg inequality (\ref{GNI}) holds.
\end{theorem}
\begin{proof}
By Proposition \ref{prop:critical}, $K^{F}$ satisfies $(VE)_{\sigma _{p}^{\#}(K)}$ and the critical
domain $B_{p,\infty }^{\sigma _{p}^{\#}}(K^{F})$ contains non-constant
functions, where $\sigma _{p}^{\#}=\sigma _{p}^{\#}(K)=\sigma
_{p}^{\#}(K^{F})$. Thus $(NE)_{\sigma _{p}^{\#}}$ holds true for $K^{F}$ by Theorem \ref{thm5}(ii)
with $R_{0}=C_H$, and \eqref{SS-2} holds by Theorem \ref{thm4.2}.

Since a nested fractal $K$ satisfies property (E) by Lemma \ref{lemma:E},
these inclusions hold for nested fractals. Next, we state how to upgrade $%
R_0=C_H$ to $R_0=\infty $ for $(NE)_{\sigma_{p}^{\#}}$ on $K_{\infty }$.

For $n\in \mathbb{N}$, define the scaling function $g_{n}:K^{F}%
\rightarrow \rho ^{-n}K^{F}$ by $g_{n}(x)=\rho ^{-n}x.$ Due to the global
self-similarity of $K_{\infty }$, we have
\begin{equation}
g_{n}(K_{\infty })=K_{\infty }.  \label{ss}
\end{equation}
For any $u\in B_{p,\infty }^{\sigma _{p}^{\#} ,F}$, clearly $u\circ g_{n}\in
B_{p,\infty }^{\sigma _{p}^{\#} ,F}$. By definition, for all $r>0$,
\begin{eqnarray}
\Phi _{u\circ g_{n}}^{\sigma _{p}^{\#} ,F}(\rho ^{n}r) &=&(\rho ^{n}r)^{-p\sigma _{p}^{\#}
-\alpha }\int_{K_{\infty }}\int_{B(x,\rho
^{n}r)}|u(g_{n}(x))-u(g_{n}(y))|d\mu ^{F}(y)d\mu ^{F}(x)  \notag \\
&=&(\rho ^{n}r)^{-p\sigma _{p}^{\#} -\alpha }\int_{K_{\infty
}}\int_{B(g_{n}(x),r)}|u(g_{n}(x))-u(y)|d(\mu ^{F}\circ g_{n}^{-1})(y)d\mu
^{F}(x)  \notag \\
&=&(\rho ^{n}r)^{-p\sigma _{p}^{\#} -\alpha }\int_{g_{n}(K_{\infty
})}\int_{B(x,r)}|u(x)-u(y)|d\mu ^{F}(\rho ^{n}y)d\mu ^{F}(\rho ^{n}x)  \notag
\\
&\asymp &(\rho ^{n}r)^{-p\sigma _{p}^{\#} -\alpha }\rho ^{2n\alpha }\int_{K_{\infty
}}\int_{B(x,r)}|u(x)-u(y)|d\mu ^{F}(y)d\mu ^{F}(x)\   \notag \\
&=&\rho ^{-n(p\sigma _{p}^{\#} -\alpha )}\Phi _{u}^{\sigma _{p}^{\#} ,F}(r),  \label{sp}
\end{eqnarray}%
where we use (\ref{ss}) and that $\mu ^{F}$ is $\alpha $-regular in the
forth line. For any $u\in B_{p,\infty }^{\sigma _{p}^{\#} ,F}$, there exists $n<\infty
$ such that
\begin{equation*}
\sup_{r\in (0,\infty )}\Phi _{u}^{\sigma _{p}^{\#} ,F}(r)\leq 2\sup_{r\in (0,C_{H}\rho
^{-n})}\Phi _{u}^{\sigma _{p}^{\#} ,F}(r).
\end{equation*}%
Combined with (\ref{sp}),
\begin{equation}
\sup_{r\in (0,\infty )}\Phi _{u}^{\sigma _{p}^{\#} ,F}(r)\leq C\sup_{r^{\prime }\in
(0,C_{H})}\Phi _{u\circ g_{n}}^{\sigma _{p}^{\#} ,F}(r^{\prime })\rho ^{n(p\sigma _{p}^{\#}
-\alpha )}.  \label{sp-1}
\end{equation}%
As $(NE)_{\sigma _{p}^{\#}}$ holds for $R_{0}=C_{H}$,
\begin{eqnarray*}
\sup_{r^{\prime }\in (0,C_{H})}\Phi _{u\circ g_{n}}^{\sigma
_{p}^{\#},F}(r^{\prime })\rho ^{n(p\sigma _{p}^{\#}-\alpha )} &\leq
&C\liminf_{r^{\prime }\rightarrow 0}\Phi _{u\circ g_{n}}^{\sigma
_{p}^{\#},F}(r^{\prime })\rho ^{n(p\sigma _{p}^{\#}-\alpha )} \\
&=&C\liminf_{r^{\prime }\rightarrow 0}\Phi _{u}^{\sigma _{p}^{\#},F}(\rho
^{-n}r^{\prime })\text{ \ (by (\ref{sp}))} \\
&=&C\liminf_{r\rightarrow 0}\Phi _{u}^{\sigma _{p}^{\#},F}(r),\text{ }
\end{eqnarray*}
where the constant $C$ does not depend on $u$ and $n$. Thus $(NE)_{\sigma _{p}^{\#}}$ also holds with $R_{0}=\infty $ by (\ref{sp-1}). Since $\mu ^{F}$ is $\alpha $-regular, we know by Lemmas \ref{lemma:E} and \ref{GNTM} that (\ref{GNI}) holds true. Finally, for a nested fractal $K$, it is
known by \cite{Kumagai.1993.PTaRF205} (see also \cite[Theorem 1.1]
{FitzsimmonsmHamblyKumagai.1994.CMP595}) that $K_{\infty }$ admits a heat
kernel satisfying \eqref{hk_F}, thus $(KE)_{\sigma _{p}^{\#}}$ is verified by Theorem \ref{thm4}.
The proof is complete.
\end{proof}

When $p=2$, we use heat kernel estimates to verify weak-monotonicity
properties in the following corollary, which is analytical (compared with
the algebraic resistance-estimate arguments for $(VE)_{\sigma}$).

\begin{corollary}
\label{jjj} When $p=2$, if a fractal glue-up $K^{F}$ admits a heat kernel
with sub-Gaussian estimates \eqref{hk_F}, then $(VE)_{\beta
^{\ast }/2}$ and $(NE)_{\beta
^{\ast }/2}$ automatically hold
for $K^F $.
\end{corollary}

\begin{proof}
By Remark \ref{rk1}, property $(KE)_{\beta
^{\ast }/2}$ automatically holds for $K^F$ when $p=2$.
The conclusion then follows from Corollary \ref{corol1.8}.
\end{proof}

\begin{figure}[tbph]
	\centering
		\begin{tikzpicture}[scale=0.5]
			\draw[fill=black] (0,0)--(1,0)--(0.5,0.866)--cycle;
			\draw[fill=black] (0.5,0.866)--(1+0.5,0.866)--(0.5+0.5,0.866*2)--cycle;
			\draw[fill=black] (0.5+10,0.866)--(1+0.5+10,0.866)--(0.5+0.5+10,0.866*2)--cycle;
			\draw[fill=black] (0.5*2,0.866*2)--(1+0.5*2,0.866*2)--(0.5+0.5*2,0.866*3)--cycle;
			\draw[fill=black] (0.5*2+9,0.866*2)--(1+0.5*2+9,0.866*2)--(0.5+0.5*2+9,0.866*3)--cycle;
			\draw[fill=black] (0.5*3,0.866*3)--(1+0.5*3,0.866*3)--(0.5+0.5*3,0.866*4)--cycle;
			\draw[fill=black] (0.5*3+8,0.866*3)--(1+0.5*3+8,0.866*3)--(0.5+0.5*3+8,0.866*4)--cycle;
			\draw[fill=black] (0.5*4,0.866*4)--(1+0.5*4,0.866*4)--(0.5+0.5*4,0.866*5)--cycle;
			\draw[fill=black] (0.5*4+7,0.866*4)--(1+0.5*4+7,0.866*4)--(0.5+0.5*4+7,0.866*5)--cycle;
			\draw[fill=black] (0.5*5,0.866*5)--(1+0.5*5,0.866*5)--(0.5+0.5*5,0.866*6)--cycle;
			\draw[fill=black] (0.5*5+6,0.866*5)--(1+0.5*5+6,0.866*5)--(0.5+0.5*5+6,0.866*6)--cycle;
			\draw[fill=black] (0.5*6,0.866*6)--(1+0.5*6,0.866*6)--(0.5+0.5*6,0.866*7)--cycle;
			\draw[fill=black] (0.5*6+5,0.866*6)--(1+0.5*6+5,0.866*6)--(0.5+0.5*6+5,0.866*7)--cycle;
			\draw[fill=black] (0.5*7,0.866*7)--(1+0.5*7,0.866*7)--(0.5+0.5*7,0.866*8)--cycle;
			\draw[fill=black] (0.5*7+4,0.866*7)--(1+0.5*7+4,0.866*7)--(0.5+0.5*7+4,0.866*8)--cycle;
			\draw[fill=black] (0.5*8,0.866*8)--(1+0.5*8,0.866*8)--(0.5+0.5*8,0.866*9)--cycle;
			\draw[fill=black] (0.5*8+3,0.866*8)--(1+0.5*8+3,0.866*8)--(0.5+0.5*8+3,0.866*9)--cycle;
			\draw[fill=black] (0.5*9,0.866*9)--(1+0.5*9,0.866*9)--(0.5+0.5*9,0.866*10)--cycle;
			\draw[fill=black] (0.5*9+2,0.866*9)--(1+0.5*9+2,0.866*9)--(0.5+0.5*9+2,0.866*10)--cycle;
			\draw[fill=black] (0.5*10,0.866*10)--(1+0.5*10,0.866*10)--(0.5+0.5*10,0.866*11)--cycle;
			\draw[fill=black] (0.5*10+1,0.866*10)--(1+0.5*10+1,0.866*10)--(0.5+0.5*10+1,0.866*11)--cycle;
			\draw[fill=black] (0.5*11,0.866*11)--(1+0.5*11,0.866*11)--(0.5+0.5*11,0.866*12)--cycle;
			\draw[fill=black] (0+1,0)--(1+1,0)--(0.5+1,0.866)--cycle;
			\draw[fill=black] (0+2,0)--(1+2,0)--(0.5+2,0.866)--cycle;
			\draw[fill=black] (0+3,0)--(1+3,0)--(0.5+3,0.866)--cycle;
			\draw[fill=black] (0+8,0)--(1+8,0)--(0.5+8,0.866)--cycle;
			\draw[fill=black] (0+9,0)--(1+9,0)--(0.5+9,0.866)--cycle;
			\draw[fill=black] (0+10,0)--(1+10,0)--(0.5+10,0.866)--cycle;
			\draw[fill=black] (0+11,0)--(1+11,0)--(0.5+11,0.866)--cycle;
			\draw[fill=black] (0+3+0.5,0+0.866)--(1+3+0.5,0+0.866)--(0.5+3+0.5,0.866+0.866)--cycle;
			\draw[fill=black] (0+3+0.5+1,0+0.866)--(1+3+0.5+1,0+0.866)--(0.5+3+0.5+1,0.866+0.866)--cycle;
			\draw[fill=black] (0+3+0.5+2,0+0.866)--(1+3+0.5+2,0+0.866)--(0.5+3+0.5+2,0.866+0.866)--cycle;
			\draw[fill=black] (0+3+0.5+3,0+0.866)--(1+3+0.5+3,0+0.866)--(0.5+3+0.5+3,0.866+0.866)--cycle;
			\draw[fill=black] (0+3+0.5+4,0+0.866)--(1+3+0.5+4,0+0.866)--(0.5+3+0.5+4,0.866+0.866)--cycle;
		\end{tikzpicture}
		\caption{ A u.f.r. fractal.}
		\label{fig1}
	\end{figure}

We use an example based on \cite[Section 4]{HamblyKumagai.2004.PSPM} to show that Corollaries \ref{corol1.8} and \ref{jjj} are also meaningful for some non-nested fractal blow-ups. That is, there exists a connected homogeneous p.c.f.
self-similar set which is not a nested fractal, such that, its fractal blow-up admits a heat kernel satisfying \eqref{hk_F}.
\begin{example}
\label{exp2}
Let $S$ be a unit equilateral triangle in $\mathbb{R}^2$. Its vertices are located at $q_0=(0, 0)$, $q_1=(1, 0)$, and $q_2=(1/2, \sqrt{3}/2)$. Divide $S$ into a mesh of sub-triangles of side length $1/12$, and pick $34$ sub-triangles as shown  in Figure \ref{fig1}.
Let $\{\phi_i\}_{i=1}^{34}$ be the IFS on $\mathbb{R}^2$ that maps $S$ to these $34$ sub-triangles with attractor $K$. Then $K=\bigcup_{i=1}^{34}\phi_i(K)$ is a connected homogeneous p.c.f. self-similar set, which is also a {\em uniformly finitely ramified fractal} (u.f.r. fractal for short). However, $K$ is not a nested fractal. Its Hausdorff dimension is \( \alpha = \dim_H(K) = \frac{\log 34}{\log 12} \), and the \( \alpha \)-dimensional Hausdorff measure exists since the open set condition is satisfied.
 Define $V_0=\{q_{k}\}_{k=0}^2$ and $V_{n+1}=\bigcup_{i=1}^{34}\phi_i(V_n)$ as in \eqref{e.V}.
Let $K_\infty$ be the fractal blow-up of $K$ (defined in Example \ref{ex1} as \( \bigcup_{l=1}^\infty K_l \), where \( K_l = 12^l K \)). Let $d_\infty$ be the shortest path length between any two points in \( K_\infty \). By the global self-similarity of blow-ups, \( d_\infty \) is bi-Lipschitz equivalent to the Euclidean metric \( d_{\mathbb{R}^2}|_{K_\infty} \).

Define $V=\bigcup_{i=0}^\infty 12^n V_n$ and $E := \{(x, y) \in V \times V : d(x, y) = 1\}$. Consider the graph $(V, E)$ with weights $\mu$ satisfying the $p_0$-condition in \cite[formula (1.5)]{BB04}. By substituting each edge in $E$ with an isometric copy of the line segment $[0, 1]$ (referred to as a \textit{cable}), and connecting them in a natural way at the vertices, we obtain an unbounded connected closed set $X_0 \subset \mathbb{R}^2$, known as the \textit{cable system} associated with $(V, E)$. The metric $d_0$ is defined on $X_0\times X_0$ by employing the Euclidean distance along each cable, and this is extended to a metric over $X_0\times X_0$ that is consistent with the graph distance on $V$.
We define $\Phi_0(r):=r \vee r^{\alpha}$ and $\Psi_0(r):=r^2 \vee r^{d_w}$. Denote by $m_0$ the Hausdorff measure on $(X_0,d_0)$, then we have $m_0(B(x,r))\asymp\Phi_0(r)$ by \cite[Lemma 2.1]{BB04}, where $B(x,r):=\{z\in X_0: d_0(x,z)<r\}$. Define a regular Dirichlet form $(\mathcal{E}_0,\mathcal{F}_0)$ corresponding to the cable process on $(X_0,d_0,m_0)$ as in \cite[Section 2]{BB04}, which is conservative and strongly local. By \cite[Corollary 4.14]{HamblyKumagai.2004.PSPM}, there exist constants $c_1$, $c_2$, $c_3$ and $c_4>0$ such that for all $x,y\in V$, $k\geq d_0(x,y)$
\begin{equation}\label{HK}
 \begin{aligned}
  p_k(x, y)  & \leq c_{1} k^{-\alpha/d_w} \exp\left(-c_{2} \left( \frac{d_0(x, y)^{d_w}}{k} \right)^{1/(d_w-1)}\right), \\
  p_k(x, y) + p_{k+1}(x, y)  & \geq c_{3} k^{-\alpha/d_w} \exp\left(-c_{4} \left( \frac{d_0(x, y)^{d_w}}{k} \right)^{1/(d_w-1)}\right).
\end{aligned}
\end{equation}
Therefore, \eqref{HK} implies \cite[formulas (1.12) and (1.13)]{BB04}. By \cite[Theorem 1.2, Corollary 2.5 and Lemma 2.6]{BB04}, the elliptic Harnack inequality $(\mathrm{EHI})$ and the mean exit time estimate $(\mathrm{E}_{\Psi_0})$ (see \cite[Definition 3.10]{GrigoryanTelcs.2012.AoP1212}) hold for the cable process. Thus the metric measure Dirichlet space $ (X_0, d_0, m_0, \mathcal{E}_0,\mathcal{F}_0)$ admits a heat kernel $\{p_t\}_{t>0}$ satisfying HKE($\Psi_0$) by \cite[Theorem 5.15]{GrigoryanTelcs.2012.AoP1212}, where the condition $HKE(\Psi)$ (for the metric measure space $(X,d,m)$)
is defined in \cite[Definition 1.1]{Chen24}:
there exist $c_{1},c_{2},c_{3}, \delta>0$ and a heat kernel $\{p_t\}_{t>0}$ such that for any $t>0$,
	\begin{align}
		p_{t}(x,y) &\leq \frac{c_{1}}{V(x,\Psi^{-1}(t))} \exp\left(-c_{2}t\Phi\left(c_{3}\frac{d(x, y)}{t}\right)\right)
		\qquad \mbox{for $m$-a.e.\ $x,y \in X$}\\
		\text{and }\ p_{t}(x,y) &\geq \frac{c_{1}^{-1}}{V(x,\Psi^{-1}(t))}
		\qquad \mbox{for $m$-a.e.\ $x,y\in X$ with $d(x,y) \leq \delta \Psi^{-1}(t)$},\label{d:HKE}
	\end{align}
	where $V(x,r):=m(\{z\in X: d(z,x)<r\})$ and \begin{equation*}
		\Phi(s):=\Phi_{\Psi}(s):=\sup_{r>0}\left({\frac{s}{r}-\frac{1}{\Psi(r)}}\right).
	\end{equation*}

 Next, we construct a sequence of metric measure spaces $ (X_n, d_n, m_n, \mathcal{E}_n,\mathcal{F}_n)$ ($n\in \mathbb{N}$), by
\begin{align*}
&X_n = X_0,~ d_n = 12^{-n} d_0~ \text{and}~~ m_n:=12^{-\alpha n} m_0,\\
&\mathcal{E}_n=12^{(d_w-\alpha)n} \mathcal{E}_0,~~ u,v\in \mathcal{F}_n:=\mathcal{F}_0.
\end{align*}

To apply the pointed Gromov-Hausdorff convergence results in \cite{Chen24}, we need to verify that the spaces $ (X_n, d_n, m_n, q_0) $ satisfy Conditions (A1)-(A6) in \cite[Theorem 1.2]{Chen24}. 
For the scaled sequence $ (X_n, d_n, m_n) $, the heat kernel $ p^{(n)}_t( x, y) $ is related to the original heat kernel $\{p_t\}_{t>0}$ by
\[ p^{(n)}_t(x, y) = 12^{\alpha n} \cdot p_{12^{d_w n} t}( x, y). \]
This scaling preserves the sub-Gaussian form, so $ (X_n, d_n, m_n,\mathcal{E}_n,\mathcal{F}_n)$ satisfies $ HKE(\Psi_n) $ with constants independent of \( n \) and $ \Psi_n(r) := 12^{-d_w n} \Psi_0(12^n r)$. Therefore, condition (A1) in \cite[Theorem 1.2]{Chen24} holds. It is easy to verify that conditions (A2-A5) in \cite[Theorem 1.2]{Chen24} also hold.

It remains to verify condition (A6) in \cite[Theorem 1.2]{Chen24}. Define
\[\widetilde{X}_{n}:=12^{-n}X_{0}=\{12^{-n}x:x\in X_{0}\},\ n\geq 1,\]
and
\begin{equation*}
\widetilde{d}_{n}(x,y):=12^{-n}d_{0}(12^{n}x,12^{n}y),\ \forall x,y\in \widetilde{X}_{n}.
\end{equation*}
This scaling satisfies the consistency relation with \( d_\infty \):
\begin{equation*}
  d_\infty(x, y) = \widetilde{d}_{n}(x, y) \quad \forall x, y \in \widetilde{X}_{n}.
\end{equation*}
Note that $\bigcup_{n=1}^\infty X_n=K_\infty$ and the map $x\mapsto 12^{-n}x$ is a bijective isometry from $(X_n, d_n) $ to $(\widetilde{X}_{n},\widetilde{d}_{n})$. Clearly, $(\widetilde{X}_{n},\widetilde{d}_{n})$ is a proper length space, since the intrinsic metric \( \widetilde{d}_{n}\) coincides with the shortest path length between points, so \( (\widetilde{X}_{n},\widetilde{d}_{n}, q_0) \) is an {\em eventually proper sequence of pointed length spaces}. Similar to \cite[Example 8.1(6) on Page 41]{Chen24}, we know that $(\widetilde{X}_{n},\widetilde{d}_{n},q_0)$ pointed Gromov-Hausdorff converge to $(K_{\infty},d_{\infty},q_0)$.

Finally, applying \cite[Theorem 1.2 and Remark 1.3 (i)]{Chen24}, we conclude that there is a measure $m_{\infty}$ on $(K_{\infty},d_{\infty})$ satisfies $m_{\infty}(B_{\infty}(x,r))\asymp r^{\alpha}$, and there is a regular strongly local Dirichlet form on $L^{2}(K_{\infty},d_{\infty},m_{\infty})$ that satisfies \eqref{hk_F}.
\end{example}

\begin{acknowledgement}
The authors thank Jiaxin Hu, and Hua Qiu for helpful suggestions. The authors thank Aobo Chen and Qingsong Gu for helpful discussions on Example \ref{exp2}. The authors appreciate the suggestions and revisions made
by anonymous referees. Jin Gao was supported by National Natural Science
Foundation of China (Grant. No. 12271282), and Zhejiang Provincial Natural Science
Foundation of China (Grant. No. LQN25A010019). Zhenyu Yu was supported by the Natural
Science Foundation of Hunan Province, China (Grant. No. 2025JJ60039) and National University of Defense Technology (Grant. No. ZK25-05).
\end{acknowledgement}

\end{document}